\documentclass[a4paper, 10pt]{amsart}

\usepackage{amsmath}
\usepackage{amssymb}
\usepackage{amsfonts}
\usepackage{mathrsfs}
\usepackage{array}
\usepackage{enumerate}
\usepackage[french, english]{babel}
\usepackage[all]{xy}
\usepackage[T1]{fontenc}
\usepackage{float}

\numberwithin{equation}{section}

\newcommand{\msl}{\mathscr{L}}

    \newcommand{\BQ}{{\mathbb {Q}}} \newcommand{\BR}{{\mathbb {R}}}

     \newcommand{\BZ}{{\mathbb {Z}}}

    \newcommand{\CA}{{\mathcal {A}}} 
     
    \newcommand{\CE}{{\mathcal {E}}}

     \newcommand{\CL}{{\mathcal {L}}}
     
    \newcommand{\CO}{{\mathcal {O}}}

     \newcommand{\fd}{{\mathfrak{d}}}

     \newcommand{\ft}{{\mathfrak{t}}}

     \newcommand{\fA}{{\mathfrak{A}}} \newcommand{\fB}{{\mathfrak{B}}}
     \newcommand{\fD}{{\mathfrak{D}}}
    \newcommand{\fE}{{\mathfrak{E}}} 
    \newcommand{\fG}{{\mathfrak{G}}}

    \newcommand{\fS}{{\mathfrak{S}}} 
     
    \newcommand{\fW}{{\mathfrak{W}}}

 \renewcommand{\mod}{\ \mathrm{mod}\ }

    \theoremstyle{plain}
    \newtheorem{thm}{Theorem}[section] \newtheorem{cor}[thm]{Corollary}
    \newtheorem{lem}[thm]{Lemma}  \newtheorem{prop}[thm]{Proposition}

\newtheorem{exm}[thm]{Example}

\theoremstyle{remark} 
\theoremstyle{remark} 
\theoremstyle{remark} 


    \numberwithin{equation}{section}

\DeclareFontFamily{U}{wncy}{}
\DeclareFontShape{U}{wncy}{m}{n}{<->wncyr10}{}
\DeclareSymbolFont{mcy}{U}{wncy}{m}{n}

\begin{document}

\title[Quadratic Forms, $K$-groups and $L$-values]{Quadratic Forms, $K$-groups and $L$-values of elliptic curves}
\author{Li-Tong Deng, \, Yong-Xiong Li \, and \, Shuai Zhai}

\begin{abstract}
Let $f$ be a positive definite integral quadratic form in $d$ variables. In the present paper, we establish a direct link between the genus representation number of $f$ and the order of higher even $K$-groups of the ring of integers of real quadratic fields, provided $f$ is diagonal and $d \equiv 1 \mod 4$, by applying the Siegel mass formula. When $d=3$, we derive an explicit formula of $r_f(n)$ in terms of the class number of the corresponding imaginary quadratic field and the central algebraic values of $L$-functions of quadratic twists of elliptic curves, by exploring a theorem of Waldspurger. Moreover, by the $2$-divisibility results on the algebraic $L$-values of quadratic twist of elliptic curves, we obtain a lower bound for the $2$-adic valuation of $r_f(n)$ for some odd integer $n$. The numerical results show our lower bound is optimal for certain cases. We also apply our main result to the quadratic form $f=x_1^2+\cdots+x^2_d$ to determine the order of the higher $K$-groups numerically.

\end{abstract}

\subjclass[2010]{11E25 (primary),11R70 11G05 (secondary)}

\keywords{Quadratic forms, $K$-groups, $L$-functions, Elliptic curves}

\maketitle

\selectlanguage{english}

\section{Introduction}\label{int}

Let $f=f(x_1,...,x_d)$ be a positive definite integral quadratic form in $d \geq 2$ variables. Within the arithmetic theory of quadratic forms, a fundamental challenge is to determine the possible values that the form $f$ can take on when its variables $x_1, ..., x_d$ are integers. Furthermore, a key objective is to establish the number of representations of a positive integer $n$ by $f$, denoted as $r_f(n)$ and defined by the expression:
$$
r_f(n)=\#\{(x_1,...,x_d)\in \mathbb{Z}^d:f(x_1,...,x_d)=n\}.
$$
To discover a precise and relatively straightforward formula for $r_f(n)$ has consistently motivated the historical exploration of quadratic forms. In certain special cases, we can explicitly calculate $r_f(n)$, such as the well-known results of Gauss and Dirichlet when $d=2$. However, in general, providing a formula for a single $r_f(n)$ is challenging. Perhaps the most notable general result in this context is Siegel's well-known result, recognised as the Siegel mass formula, which links the genus representation number of $f$ to the $L$-value of certain quadratic field. While in this paper, we will explore the link between the genus representation number involving $r_f(n)$ and $K$-groups; and the connection among $r_f(n)$, $L$-values of elliptic curves and class numbers.

Throughout the paper, we assume that
$$
f(x_1,...,x_d)=b_1x_1^2+\cdots +b_dx_d^2
$$
is a diagonal quadratic form with $d$ odd satisfying $b_i\in\BZ$ $(1\leq i\leq d)$ and $\gcd (b_1,\cdots,b_d)=1$. The matrix associated to $f$ is defined by the diagonal matrix
\begin{equation}\label{1-1-f-1}
 A_f:=\left(\frac{\partial^2 f}{\partial x_i \partial x_j}\right)_{d\times d}=2\cdot{\rm diag}\{b_1,\cdots,b_d\}.
\end{equation}
Denote the group of automorphisms of $f$ by
$$
\mathcal{O}(f)=\{\alpha\in GL_d(\mathbb{Z}):\alpha^tA_f\alpha=A_f\}.
$$
It is clearly that $\mathcal{O}(f)$ is finite, and set $\mu_f=|\CO(f)|$. Here for a finite set $\fW$, we denote by $|\fW|$ the cardinality of $\fW$. Let $\mathfrak{G}_f$ be the genus of $f$. It is well known that $\mathfrak{G}_f$ is a finite set with cardinality $h_f$, which we also call the class number of $f$. For each $h \in \fG_f$, denote
\begin{equation}\label{1-1-f-2}
  \xi_h=\left(\mu_h\sum\limits_{g\in \mathfrak{G}_f}\frac{1}{\mu_g}\right)^{-1}.
\end{equation}
We then define the genus representation number of $f$ to be
\[G_f(n)=\sum\limits_{g\in \mathfrak{G}_f} \xi_g r_g(n).\]

For each prime $p$ (including the infinite prime $p=\infty$), we can view the equation
$$
f(x_1,\cdots,x_d)=n
$$
as a map from $\BZ^d_p$ to $\BZ_p$ (when $p=\infty$ we view the map from $\BR^d$ to $\BR$). Taking a Haar measure on $\BZ_p$, we obtain a Haar measure on $\BZ^d_p$. The density \(\delta_p(n,f)\) of solutions \(f(x)=n\) in \(\mathbb{Z}_p\) is defined to be
\begin{equation}\label{1-1-f-3}
\delta_p(n,f)=\lim_{U \to n} \frac{vol(f^{-1}(U))}{vol(U)},
\end{equation}
where \(U\) runs through the neighborhoods of \(n\in\mathbb{Z}_p\). One can show that when \(U\) is small enough, the ratio is stable (see \cite{Iwaniec}). Define \(\varepsilon_2=\frac{1}{2}\) and \(\varepsilon_d=1\) if \(d>2\). One has the following mass formula of Siegel.

\begin{thm}[Siegel, \cite{Iwaniec}]\label{S}
We have
\begin{equation}\label{SF}
G_f(n)=\varepsilon_d\prod\limits_{p\le\infty} \delta_p(n,f).
\end{equation}
\end{thm}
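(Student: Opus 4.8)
The plan is to package the representation numbers into generating functions and to identify the mass-weighted genus average with an Eisenstein series, whose $n$-th Fourier coefficient is exactly the product of local densities on the right of \eqref{SF}. For each class $g$ in the genus $\fG_f$, with $f_g$ the form representing $g$, I form the theta series
$$\theta_g(z)=\sum_{x\in\BZ^d}e^{2\pi i f_g(x)z}=\sum_{n\ge 0}r_g(n)\,q^n,\qquad q=e^{2\pi i z},$$
on the upper half plane. Since $f$ is positive definite, each $\theta_g$ is a holomorphic modular form of weight $d/2$ on a congruence subgroup determined by the level of $A_f$, with a character read off from $\det A_f$; because $d$ is odd this is a half-integral weight form in the sense of Shimura. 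Averaging over the genus with the Siegel weights $\xi_g$ of \eqref{1-1-f-2} gives
$$\Theta(z):=\sum_{g\in\fG_f}\xi_g\,\theta_g(z)=\sum_{n\ge 0}G_f(n)\,q^n,$$
so that establishing \eqref{SF} is equivalent to computing the $n$-th Fourier coefficient of $\Theta$.

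The crucial step is the identification $\Theta=E$ with a specific weight $d/2$ Eisenstein series attached to $f$. The cleanest route is the Siegel--Weil formula: because $f$ is definite the orthogonal group of $f$ is compact at the archimedean place, the relevant adelic quotient has finite volume, and integrating the theta kernel over it reproduces, on one side, the mass-weighted average $\Theta$, and on the other side, the Eisenstein series $E$; here the double coset decomposition of the adelic group is precisely the partition into classes of the genus and the resulting volumes are the weights $\xi_g$. A more hands-on variant matches constant terms: since the coefficients $G_f(n)$ are genus invariants, at every cusp $\Theta$ has the same constant term as a unique $E$, so $\Theta-E$ is a cusp form, and the content of Siegel's theorem is that this cusp form vanishes. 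This identification is the conceptual heart of the result and the step I expect to be the main obstacle, since it is essentially equivalent to the mass formula itself and demands care with the half-integral weight and with the bad primes dividing $2\det A_f$.

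It then remains to compute the Fourier coefficients of $E$ and match them with $\prod_{p\le\infty}\delta_p(n,f)$. The coefficient of $E$ factors as an archimedean term times a singular series $\prod_{p<\infty}\delta_p(n,f)$, where for finite $p$ one has the representation density
$$\delta_p(n,f)=\lim_{k\to\infty}\frac{\#\{x\in(\BZ/p^k\BZ)^d:f(x)\equiv n \mod p^k\}}{p^{k(d-1)}},$$
and the factor at $p=\infty$ is the density of the real solution ellipsoid extracted from \eqref{1-1-f-3}. At the good primes the Euler factor reduces to counting $\BF_p$-points on a smooth quadric, while the archimedean factor is a Gamma-factor times a power of $n$; assembling these reproduces the right-hand side of \eqref{SF}. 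Finally the constant $\varepsilon_d$ records a normalization: for $d>2$ the Eisenstein series converges absolutely and $\varepsilon_d=1$, whereas for $d=2$ it is only conditionally convergent and the factor $\varepsilon_2=\tfrac12$ compensates for the resulting pole, exactly as in the classical Dirichlet class number formula.
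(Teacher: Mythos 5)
The paper does not actually prove this theorem: it is quoted as a classical result with a reference to Iwaniec's book \cite{Iwaniec}, and the paragraph following it merely frames it as a special case of the Siegel--Weil formula. So the only question is whether your sketch would stand on its own as a proof, and there it has a genuine gap --- one you yourself flag. The problem is the identification $\Theta=E$ of the weighted genus theta average with the Eisenstein series. Your first route simply cites Siegel--Weil; that is legitimate as a reduction, but that identity, specialized to definite forms, \emph{is} the mass formula, so the ``proof'' becomes a restatement plus the (also nontrivial, only gestured-at) verification that the Fourier coefficients of $E$ equal the product of local densities, including at the primes dividing $2\det A_f$. Your second, ``hands-on'' route is circular as stated: matching constant terms at all cusps only shows that $\Theta-E$ is a cusp form, and nothing about being a genus invariant forces a cusp form to vanish --- indeed the entire point of the paper's Section \ref{walds} is that $\theta_f-\theta_g$ is a \emph{nonzero} cusp form of weight $3/2$ whose coefficients carry $L$-values, so cusp forms are plentiful in exactly the spaces at issue.

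The gap can be closed along lines the paper itself records later. By Schulze-Pillot's theorem (\cite{Schulze}, Korollar 1, quoted inside the proof of Lemma \ref{Hecke}), the genus theta series $\theta_{\fG_f}=\sum_{n\ge 0}G_f(n)q^n$ satisfies $T_{p^2}\theta_{\fG_f}=(p^{d-2}+1)\theta_{\fG_f}$ for all good $p$, and since cuspidal Hecke eigenvalues are too small to equal $p^{d-2}+1$ when $d>2$, it lies in the Eisenstein subspace $E_{\frac{d}{2}}(N_f,\chi_f)$; this rests on the Eichler commutation relation (the \cite{Rallis} input of Lemma \ref{Hecke}) and is independent of the mass formula, so no circularity arises. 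Once $\Theta$ is known to be Eisenstein, your constant-term matching does pin it down uniquely, and the singular-series computation of its coefficients finishes the argument. Two further caveats: absolute convergence of the Eisenstein series fails not only for $d=2$ but also for $d=3,4$ (weights $3/2$ and $2$), which are precisely the cases the paper's later sections need, so Hecke's trick or a regularized Siegel--Weil formula must be invoked there; and your explanation of $\varepsilon_2=\tfrac{1}{2}$ as compensating a pole is not the actual source of that constant, though this is immaterial here since the paper only uses odd $d\ge 3$.
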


Siegel's formula gives a connection between coefficients of weighted average of theta series and the product of local representation densities of positive definite quadratic forms over $\mathbb{Z}_p$. The above formula is a special case of the Siegel--Weil formula, which establishes an identity between theta integrals and special values of Siegel Eisenstein series.We will apply the above formula to establish a direct link between the genus representation number $G_f$ and the order of higher even $K$-groups of the ring of integers of real quadratic fields.

\subsection{Main results on higher even $K$-groups}

Recall that $f(x_1,...,x_d)=b_1x_1^2+\cdots +b_dx_d^2$. Put $D=b_1\cdots b_d$. Assume that $d\geq5$ and $d\equiv 1\mod 4$. For any positive integer $n$ prime to $2D$, we define $F$ to be the real quadratic field $\mathbb{Q}(\sqrt{nD})$. Let $\CO_F$ be the ring of integers of $F$. For any positive even integer $m$, we define $K_{m}(\CO_F)$ to be the $m$-th $K$-group of $\CO_F$. By a fundamental result of Quillen \cite{Q} (see also \cite{Weibel}), we know $K_{m}(\CO_F)$ is finite.  Put $\fd=\frac{d-1}{2}$. Let $\zeta(s)$ be the Riemann zeta function and $w_\fd(F)=|H^0(F,(\BQ/\BZ)(\fd))|$. Define
	$$
		W_\fd(F)=
    		\left\{
    		\begin{aligned}
        		& w_\fd(F)    & \text{if } d \equiv 5 \mod 8; \\
		& \frac{w_\fd(F)}{4} & \text{if } d \equiv 1\mod 8.
    		\end{aligned}
    		\right.
	$$
The first main result of the paper is given by the following theorem.

\begin{thm}\label{main-thm-1}
Assume that $d\geq5$ and $d\equiv 1\mod 4$. Let $n$ be any positive square free integer which is coprime to $2D$. Then
we can find a rational number \(\rho_f(n)\) such that
\begin{equation}\label{main-thm-f-1}
G_f(n)=\frac{\rho_f(n)}{W_{\fd}(F)\zeta(1-\fd)} |K_{d-3}(\mathcal{O}_F)|.
\end{equation}
\end{thm}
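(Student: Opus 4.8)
The plan is to run Siegel's mass formula (Theorem~\ref{S}) and then reorganize the resulting Euler product of local densities into a special value of the Dedekind zeta function $\zeta_F$, which the Lichtenbaum conjecture (now a theorem via the Quillen--Lichtenbaum conjecture) converts into the order of $K_{d-3}(\mathcal{O}_F)$. Since $d>2$ we have $\varepsilon_d=1$, so $G_f(n)=\delta_\infty(n,f)\prod_{p<\infty}\delta_p(n,f)$. First I would record the archimedean density, which for a positive definite diagonal form of rank $d=2\fd+1$ is the standard singular integral
\[
\delta_\infty(n,f)=\frac{\pi^{d/2}}{\Gamma(d/2)}\,\frac{n^{\fd-\frac12}}{\sqrt{b_1\cdots b_d}},
\]
a rational multiple of $\pi^{\fd}\,n^{\fd-\frac12}D^{-\frac12}$ since $\Gamma(d/2)=\Gamma(\fd+\tfrac12)$ is a rational multiple of $\sqrt\pi$. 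For a good prime $p\nmid 2Dn$ the $p$-adic density of the odd-rank diagonal form is the classical expression $\delta_p(n,f)=(1-p^{-2\fd})/(1-\chi(p)p^{-\fd})$, where $\chi$ is the quadratic character attached to $F=\BQ(\sqrt{nD})$; here the hypothesis $d\equiv1\bmod4$, i.e. $\fd$ even, is exactly what makes the sign $(-1)^{\fd}=1$, so that the relevant discriminant is $+nD$ and $F$ is genuinely real. Taking the product over all good primes reconstitutes $\prod_{p\nmid2Dn}\delta_p(n,f)=\frac{L(\fd,\chi)}{\zeta(2\fd)}\prod_{p\mid2Dn}(\text{correction})$, and the finitely many factors at bad primes $p\mid 2Dn$, computed from the explicit $p$-adic density formulas, are rational numbers that together with the rational part of $\delta_\infty$ will be absorbed into $\rho_f(n)$.

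The second step is to transport $L(\fd,\chi)$ across its functional equation. Since $\chi$ is an even real character (as $F$ is real), the completed $L$-function $\Lambda(s,\chi)=(N/\pi)^{s/2}\Gamma(s/2)L(s,\chi)$ with $N=|d_F|$ satisfies $\Lambda(s,\chi)=\Lambda(1-s,\chi)$, whence
\[
L(\fd,\chi)=(\text{rational})\cdot N^{\frac12-\fd}\,\pi^{\fd}\,L(1-\fd,\chi),
\]
because for $\fd$ even $\Gamma(\fd/2)$ is an integer while $\Gamma((1-\fd)/2)$ is a rational multiple of $\sqrt\pi$. Multiplying $\delta_\infty$, the relation $\zeta(2\fd)=(\text{rational})\,\pi^{2\fd}$, and this identity, every power of $\pi$ cancels and every surd ($\sqrt D$ and $\sqrt N$, which differs from $\sqrt{nD}$ by a rational factor) combines to a rational number, leaving $G_f(n)=\rho'\,L(1-\fd,\chi)$ for an explicit $\rho'\in\BQ$. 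Writing $\zeta_F(s)=\zeta(s)L(s,\chi)$ and isolating $L(1-\fd,\chi)=\zeta_F(1-\fd)/\zeta(1-\fd)$ produces the factor $\zeta(1-\fd)$ in the denominator of \eqref{main-thm-f-1}; note that $1-\fd$ is a negative odd integer, so $\zeta_F(1-\fd)\neq0$ precisely because $F$ is totally real.

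Finally I would invoke the Lichtenbaum formula for the totally real field $F$ at the negative odd integer $1-\fd$:
\[
|\zeta_F(1-\fd)|=\frac{|K_{d-3}(\mathcal{O}_F)|}{|K_{d-2}(\mathcal{O}_F)|},
\]
where $K_{d-3}(\mathcal{O}_F)$ (with $d-3\equiv2\bmod4$) and $K_{d-2}(\mathcal{O}_F)$ (with $d-2\equiv3\bmod4$, hence finite since $r_2(F)=0$) are both finite by Quillen. Identifying the denominator $|K_{d-2}(\mathcal{O}_F)|$ with $W_{\fd}(F)$ — equal to $w_{\fd}(F)=|H^0(F,(\BQ/\BZ)(\fd))|$ when $d\equiv5\bmod8$ and to $w_{\fd}(F)/4$ when $d\equiv1\bmod8$ — and substituting into $G_f(n)=\rho'\,\zeta_F(1-\fd)/\zeta(1-\fd)$ yields the asserted identity, with $\rho_f(n):=\pm\rho'$ rational.

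The main obstacle I anticipate is the precise determination of the $2$-adic factor $W_{\fd}(F)$, and in particular the mod-$8$ dichotomy. Two independent $2$-adic computations must be matched: the local density $\delta_2(n,f)$ at the prime $2$ together with the $2$-part of the functional-equation constant on one side, and the exact order $|K_{d-2}(\mathcal{O}_F)|$ of the odd $K$-group — where the classical factor-of-two discrepancy between $K$-theory and \'etale cohomology at the real places resides — on the other. Controlling this discrepancy, which depends on $\fd\bmod4$ (equivalently on $d\bmod8$) and on whether $F(\mu_{2^\infty})$ acquires the relevant roots of unity, is what forces the division by $4$ in the case $d\equiv1\bmod8$ and is the delicate heart of the argument; by contrast, the cancellation of all transcendental and irrational factors is routine bookkeeping once the local formulas are in hand.
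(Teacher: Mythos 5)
Your proposal follows essentially the same route as the paper's proof of this theorem: Siegel's formula with $\varepsilon_d=1$, the local densities at $p\nmid 2Dn$, at $p\mid n$ and at $\infty$ (Lemmas \ref{delta}, \ref{delta2}, \ref{delta-inf}), assembly of the good Euler factors into $L(\fd,\chi)/\zeta(2\fd)$ (with the observation that $\fd$ even makes the relevant discriminant $+nD$, so $F$ is real), the functional equation and the cancellation of all powers of $\pi$ and surds to get $G_f(n)=\rho'\,L(1-\fd,\chi)$ with $\rho'\in\BQ$, and finally $\zeta_F(1-\fd)=\zeta(1-\fd)L(1-\fd,\chi)$ plus Quillen--Lichtenbaum; this is exactly Proposition \ref{K} followed by the paper's concluding argument. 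One small point you gloss over: the primes $p\mid n$ vary with $n$, so their contributions cannot simply be ``absorbed'' if one wants $\rho_f(n)$ to take only finitely many values (the actual point of the theorem, per the remark following it); the paper's Lemma \ref{delta2} shows $\delta_p(n,f)=1-p^{-2\fd}$ there, which together with $\chi(p)=0$ makes those correction factors identically $1$.

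The one genuine divergence is how the $K$-theory enters, and there your version contains a self-cancelling error. The paper cites the proven Lichtenbaum formula directly in the form $|K_{d-3}(\CO_F)|=W_\fd(F)\,\zeta_F(1-\fd)$, with the mod $8$ dichotomy already contained in the cited statement (legitimate because $F/\BQ$ is abelian, so Wiles' Iwasawa main conjecture plus Rost--Voevodsky apply); no $2$-adic matching remains to be done. You instead invoke the ratio form $|\zeta_F(1-\fd)|=|K_{d-3}(\CO_F)|/|K_{d-2}(\CO_F)|$ and then identify $|K_{d-2}(\CO_F)|$ with $W_\fd(F)$. The ratio formula as you state it is false: the precise version for a totally real field carries a factor $2^{r_1}$, so for real quadratic $F$ it reads $|\zeta_F(1-\fd)|=4\,|K_{d-3}(\CO_F)|/|K_{d-2}(\CO_F)|$ (already for $F=\BQ$, $\fd=2$ one has $|\zeta(-1)|=\tfrac{1}{12}$ while $|K_2(\BZ)|/|K_3(\BZ)|=\tfrac{1}{24}$). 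Correspondingly, your identification $|K_{d-2}(\CO_F)|=W_\fd(F)$ is off by the compensating factor of $4$, which is why you nevertheless land on the paper's formula. Since the theorem only asserts the existence of \emph{some} rational $\rho_f(n)$, any such power of $2$ can be absorbed into $\rho_f(n)$, so your argument does yield the stated result; but the ``delicate $2$-adic matching'' you single out as the heart of the argument is not actually part of the proof --- it is subsumed in the quoted form of Quillen--Lichtenbaum --- and it would become a real gap only if your two $K$-theoretic identities needed to be individually correct, e.g.\ in order to pin down $\rho_f(n)$ explicitly as in \eqref{gamma2}.
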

\bigskip

The rational number $\rho_f(n)$ can be explicitly determined in \eqref{gamma2} and has finitely many possibilities (depending on $f$). In fact, in Corollary \ref{B2} we can find a finite set $\fS(d)$ of positive square free integers such that for any positive square free integer $n$,  $\rho_f(n)=\rho_f(s)$ for some $s\in\fS(d)$.
We remark $\rho_f(n)$ can be zero in the sense that $G_f(n)$ is also zero.

\medskip

\begin{exm}\label{exm-1-2}
Let \(d=5\), \(f=x_1^2+x_2^2+x_3^2+x_4^2+7x_5^2\), then \(h_f=2\) and the other element in $\fG_f$ is \(g=x_1^2+x_2^2+x_3^2+2x_4^2+4x_5^2+2x_4x_5\). In this case \(\fd=2\) and \(K=\mathbb{Q}(\sqrt{7n})\), which implies \(w_2(K)=24\). Theorem \ref{main-thm-1} shows that
\[r_f(n)+4r_g(n)=\frac{5\rho_f(n)}{-2} |K_2(\mathcal{O}_K)|,\]
where
	$$
		\rho_f(n)=
    		\left\{
    		\begin{aligned}
        		& -\frac{4}{5} & & n\equiv 1\mod 4 \\
        		& -\frac{28}{5} & & n\equiv 3\mod 8 \\
		& -\frac{12}{5} & & n\equiv 7\mod 8.
    		\end{aligned}
    		\right.
	$$
\end{exm}

\medskip

In view of the above theorem and example, one can determine the order of $K$-groups by studying the representation number of quadratic forms, and conversely, using the theory of $K$-groups to study quadratic forms. In the final section of the paper, we take
$$
f=x_1^2+x^2_2+\cdots+x^2_d.
$$
By using the formula \eqref{main-thm-f-1}, we numerically determine the order of $K_m(\CO_K)$ for $m=2,6,10$.

\medskip

Recall that $D=b_1\cdots b_d$. For a positive integer $b$, we denote by $h(-b)$ the class number of the imaginary quadratic field $\BQ(\sqrt{-b})$.
Using the same method in the proof of Theorem \ref{main-thm-1}, when $d=3$, we can obtain the following result relating $G_f(n)$ to the class number of certain imaginary quadratic fields.

\medskip

\begin{thm}\label{main-thm-2}
Let
\[f(x,y,z)=b_1 x^2+b_2 y^2+b_3 z^2\]
with $\gcd(b_1,b_2,b_3)=1$. For any positive square free integer $n$ coprime to $2D$, we can find a non-negative rational number $\lambda_f(n)$ such that
\[G_f(n)=\lambda_f(n)h(-Dn).\]
\end{thm}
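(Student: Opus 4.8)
The plan is to run the same machine as in the proof of Theorem \ref{main-thm-1}, but to assemble the archimedean and good-prime factors into Dirichlet's class number formula for an imaginary quadratic field rather than into a $K$-group order. First I would invoke Siegel's mass formula (Theorem \ref{S}); since $d=3>2$ we have $\varepsilon_3=1$, so
$$G_f(n)=\delta_\infty(n,f)\prod_{p<\infty}\delta_p(n,f).$$
The archimedean density is a routine Gaussian computation for the positive definite form $f$: with the chosen normalization of Haar measure one finds $\delta_\infty(n,f)=\frac{2\pi}{\sqrt{D}}\sqrt{n}$, so it contributes a factor $\sqrt{n}$ together with a single power of $\pi$ and the fixed constant $1/\sqrt{D}$.

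Next I would evaluate the finite local densities, separating the good primes $p\nmid 2Dn$ from the finitely many bad primes $p\mid 2Dn$. At a good odd prime, counting points of $f=n$ over $\mathbb{F}_p$ gives $p^2+\left(\frac{-Dn}{p}\right)p$, so that $\delta_p(n,f)=1+\left(\frac{-Dn}{p}\right)p^{-1}=(1-p^{-2})\bigl(1-\chi(p)p^{-1}\bigr)^{-1}$, where $\chi=\chi_{-Dn}$ is the Kronecker character attached to $F'=\mathbb{Q}(\sqrt{-Dn})$; the sign $-1$ is precisely the value $(-1)^{(d-1)/2}$ that makes this quadratic field imaginary when $d\equiv 3\bmod 4$. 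Completing this Euler product over all primes produces $\zeta(2)^{-1}L(1,\chi_{-Dn})$, and the discrepancy from the true product — the corrected Euler factors at the bad primes together with the $2$-adic factor — is a finite product of rational numbers depending only on the residue of $n$ modulo a fixed power of $2$ and on the $\gcd$-data of $b_1,b_2,b_3$.

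The decisive step is then to substitute Dirichlet's class number formula
$$L(1,\chi_{-Dn})=\frac{2\pi\,h(-Dn)}{w_{-Dn}\sqrt{|d_{F'}|}},$$
where $d_{F'}$ is the discriminant of $F'$ and $w_{-Dn}$ its number of roots of unity. Because $n$ is squarefree and coprime to $2D$, we have $F'=\mathbb{Q}(\sqrt{-D_0 n})$ with $D_0$ the squarefree part of $D$, and $|d_{F'}|$ equals $D_0n$ or $4D_0n$ according to $-D_0n\bmod 4$. Writing $D=D_0m^2$, the surds then cancel completely, since $\frac{\sqrt{n}}{\sqrt{D}\,\sqrt{|d_{F'}|}}$ reduces to $\frac{1}{mD_0}$ (or $\frac{1}{2mD_0}$), a rational number; simultaneously the single power of $\pi$ from $\delta_\infty$ cancels the $\pi$ of the class number formula against the $\pi^2$ of $\zeta(2)^{-1}$. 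Collecting the surviving factors — the archimedean constant, the $\zeta(2)$-correction, the bad Euler factors, and the class-number-formula constants — yields $G_f(n)=\lambda_f(n)\,h(-Dn)$ with $\lambda_f(n)\in\mathbb{Q}$.

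Finally I would verify the two asserted properties of $\lambda_f(n)$. Rationality is exactly the content of the cancellations above: every transcendental quantity ($\pi$ and the surds) disappears. Non-negativity is automatic, since $G_f(n)=\sum_{g}\xi_g r_g(n)\ge 0$ with all $\xi_g>0$, while $h(-Dn)>0$, forcing $\lambda_f(n)\ge 0$. The step I expect to be the main obstacle is the precise bookkeeping of the bad local densities at $p=2$ and at primes dividing $D$ or $n$: one must track these carefully enough both to confirm the exact rational value of $\lambda_f(n)$ and to see that it depends on $n$ only through finitely many congruence classes — the same finiteness phenomenon already recorded for $\rho_f(n)$ after Theorem \ref{main-thm-1}.
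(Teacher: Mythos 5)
Your proposal is correct and follows essentially the same route as the paper's own proof (Proposition \ref{C}): Siegel's mass formula, the identification of the good-prime densities $\delta_p(n,f)=1+\chi(p)p^{-1}$ with Euler factors of $\zeta(2)^{-1}L(1,\chi)$, Dirichlet's class number formula, and the cancellation of $\pi$ and the surds via $|D_K|\in\{D_0n,4D_0n\}$. The only imprecision is peripheral to the statement itself: the bad-factor corrections depend on $n$ modulo $8$ \emph{and} modulo the odd primes dividing $D$ (with the factors at $p\mid n$ collapsing to $1$ because $\delta_p(n,f)=1-p^{-2}$ and $\chi(p)=0$), which is exactly the bookkeeping the paper records in Corollary \ref{B} for the finiteness of the set of values of $\lambda_f(n)$.
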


\medskip

Theorem \ref{main-thm-2} directly follows from Proposition \ref{C} and Corollary \ref{B}. Here the number $\lambda_f(n)$ can be explicitly determined and has finitely many possibilities (depending on a similar set $\fS(3)$ as $\fS(d)$). The number $\lambda_f(n)$ may be zero, an example is given in Proposition \ref{gfp}. Analogous result also obtained in \cite{WZ}, but our methods are different.

\medskip

\subsection{Main results on quadratic twists of elliptic curves}
Recall that the theta series related to $f$ is defined by
\[\theta_f=\sum\limits_{x\in \mathbb{Z}^d} q^{f(x)}=\sum\limits_{n=0}^\infty r_f(n)q^n.\]
If \(g\) lies in the same genus of \(f\), then a classical result by Eichler \cite{Eichler} states that \(\theta_f-\theta_g\) is a cusp form of weight \(\frac{d}{2}\). Moreover, if \(d\) is odd and \(\theta_f-\theta_g\) is an eigenform, by Shimura lift, the coefficients of \(\theta_f-\theta_g\) is related to the coefficients of some modular forms of integral weight. We define the class number $h_f$ of $f$ to be the number of the genus of $f$.

We assume that $d=3$ and $h_f=2$. Thus we can write $\fG_f=\{f,g\}$. Recall that $A_f$ is the matrix associated to $f$, we define the level of $f$ to be the smallest positive integer $N=N_f$ such that $NA^{-1}_f$ has integral coefficients and even diagonal entries. It easily follows that $N$ is divisible by $4$ in view of \eqref{1-1-f-1}. Recall that $D=b_1b_2b_3$. By a theorem of Eichler \cite{Eichler}, $\theta_f-\theta_g$ is an eigen cusp form with weight $\frac{3}{2}$, level $N$ and character $\chi$ given by the Jacobian symbol $\left(\frac{4D}{\cdot}\right)$ (see \cite{Schulze}). Namely, $\theta_f-\theta_g\in S_{\frac{3}{2}}(N,\chi)$. Let us write
\[\theta_f-\theta_g=\sum^\infty_{n=1}a_nq^n. \]
By a fundamental theorem of Shimura \cite{Shimura} and a remark made by Niwa \cite{Niwa}, $\theta_f-\theta_g$ corresponds to a modular form of weight $2$, level $\frac{N}{2}$ and character $\chi^2$, we call the modular form the Shimura lift of $\theta_f-\theta_g$. By the modularity theorem due to Wiles et. al., we put the following assumption:
\begin{itemize}
  \item (\textbf{Sh-E}) The Shimura lift of $\theta_f-\theta_g$ corresponds to an elliptic curve $\CE$ over $\BQ$.
\end{itemize}

\medskip

For an elliptic curve $\CA$ over $\BQ$ of conductor $C$ and a square free integer $m$, we denote by $\CA^{(m)}$ the quadratic twist of $\CA$ by the extension $\BQ(\sqrt{m})$ over $\BQ$.
Let $L(\CA^{(m)},s)$ be the complex $L$-series of $\CA^{(m)}$. Let $\Omega_{\CA^{(m)}}=\Omega^+_{\CA^{(m)}}$ or $2\Omega^+_{\CA^{(m)}}$, according as $\CA^{(m)}(\BR)$ is connected or not. Here $\Omega^+_{\CA^{(m)}}$ is the minimal real period of a N\'{e}ron differential on a global minimal Weierstrass equation for $\CA^{(m)}$. We define the algebraic $L$-value of $\CA^{(m)}$ to be
\[\msl_{\CA}(m)=\frac{L(\CA^{(m)},1)}{\Omega_{\CA^{(m)}}}.\]
A well-known theorem of Birch shows that $\msl_{\CA}(m)$ is always a rational number. For each finite prime $\ell$, we denote by $c_\ell(\CA)$ the Tamagawa number of $\CA$ at $\ell$. For any square free integer $M\equiv 1\mod 4$ and $(M,C)=1$, we define
\[t_\CA(M)=\sum_{\ell\mid M}t(\ell),\]
where
$t(\ell)$ is equal to $1$ (respectively, $0$) according as $2$ divides (respectively, does not divide) $c_\ell(\CA^{(M)})$.

\medskip

To state our main result, we have to introduce some notations. For each prime $p$, let $\BQ_p$ be the field of $p$-adic numbers. For an integer $M$, we say that two square free positive integers $n_1,n_2$ are {\it $M$-square-linked} if $n_1/n_2$ is a square in $\BQ^\times_\ell$ for each prime $\ell$ dividing $M$. Obvious the  {\it $M$-square-linked} relation is an equivalent relation. For any integer $M\neq\pm1$, we can easily show that the set of equivalent classes of  {\it $M$-square-linked} relation is finite. Recall that $f=b_1x^2+b_2y^2+b_3z^2$, $D=b_1b_2b_3$ and $N_f$ is the level of $f$. We put $\fS_1$ to be a set of positive square free integers which consists of representatives of the equivalent classes of {\it $N_f$-square-linked}
relation.

Let $n>3$ be a square free integer prime to $2D$. Recall the elliptic curve $\CE$ defined via Shimura lift of $\theta_f-\theta_g$. Let $D^\circ$ be the square free part of $D$. We put $E=\CE^{(-D^\circ)}$ (respectively, $E=\CE^{(D^\circ)}$) if $n\equiv 1\mod 4$ (respectively, $n\equiv 3\mod 4$).

From Corollary \ref{B}, $\fS_1$ can be taken as a subset of $\fS(3)$.
Now we make the following assumption on the set $\fS_1$:
\begin{itemize}
  \item (\textbf{In-Nv}) There exists a positive square free integer $n_0\in\fS_1$ such that
  $$L(E^{(n_0)},1)\neq0.$$
\end{itemize}
In fact, by the main theorem in \cite{BFH}, when the sign of $E$ is $+1$, one can always find such a $n_0$ satisfying (\textbf{In-Nv}).

From now on, we simply write $\CL(m)$ for $\msl_{\CE}(-D^\circ m)$ for any square free integer $m$. Recall that $\fG_f=\{f,g\}$. From definition \eqref{1-1-f-2}, we have
\[\xi_g=\frac{\mu_f}{\mu_f+\mu_g}\quad\textrm{ and }\quad \xi_f=1-\xi_g,\]
we then simply write $\xi$ for $\xi_g$. Denote by $v_2$ the additive normalised $2$-adic valuation on $\overline{\BQ}_2$ such that $v_2(2)=1$. Recall the rational number $\lambda_f(n)$ in Proposition \ref{main-thm-2}.
We define $\fA$ to be the minimal number of all $v_2\left(\frac{\xi a_{n_0}}{\sqrt{\CL(n_0)}}\right)$ where $n_0$ runs over all elements in $\fS_1$ satisfying (\textbf{In-Nv}). We also define $\fB$ to be the minimal number of all $v_2(\lambda_f(s))$, where $s$ runs over all elements in $\fS(3)$. Let $\mu(m)$ be the number of distinct odd prime divisors of $m$.

\medskip

Our second main result is given by the following theorem.

\begin{thm}\label{main-thm-3}
Assume that $d=3$ and $h_f=2$, say $\fG_f=\{f,g\}$.
Let $n>3$ be a square free integer coprime to $2D$. Assume that
\[\theta_f-\theta_g=\sum^\infty_{n=1}a_n q^n\]
and
the condition (\textbf{Sh-E}) holds.
\begin{enumerate}
  \item [(1)]We have $r_f(n)=\xi a_n+\lambda_f(n)\cdot h(-Dn)$.
  \item [(2)]If the condition (\textbf{In-Nv}) holds and the two integers $n$ and $n_0$ are {\it $N_f$-square-linked}, then
  \[r_f(n)=\pm\xi a_{n_0}\sqrt{\frac{\CL(n)}{\CL(n_0)}}+\lambda_f(n)h(-Dn).\]
  \item [(3)]Suppose that (\textbf{In-Nv}) holds and the two integers $n$ and $n_0$ are {\it $N_f$-square-linked}. Assume that $E$ is an optimal elliptic curve with conductor $C_E$ and Manin constant $\nu_E$. We put
\[\kappa=\max\{2(\fA-\fB-\mu(D))+1-v_2(\nu_E),1\}.\]
      Then
   \[r_f(n)\geq \fA+\frac{1}{2}(t_E(n^*)-1-v_2(\nu_E))\]
   holds for all $n$ such that $\mu(n)\geq\kappa$. Here $n^*=\left(\frac{-1}{n}\right)n$.
\end{enumerate}
\end{thm}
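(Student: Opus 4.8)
The plan is to derive an exact formula for $r_f(n)$ from parts (1) and (2) and then estimate it $2$-adically for part (3).

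Part (1) is pure bookkeeping. Since $\theta_f-\theta_g=\sum a_nq^n$ we have $a_n=r_f(n)-r_g(n)$, while by definition $G_f(n)=(1-\xi)r_f(n)+\xi r_g(n)$. Subtracting gives $r_f(n)-G_f(n)=\xi\big(r_f(n)-r_g(n)\big)=\xi a_n$, and Theorem~\ref{main-thm-2} supplies $G_f(n)=\lambda_f(n)h(-Dn)$. Hence $r_f(n)=\xi a_n+\lambda_f(n)h(-Dn)$. For part (2) the essential input is Waldspurger's theorem applied to the weight-$\tfrac32$ eigenform $\theta_f-\theta_g$, whose Shimura lift corresponds to $\CE$ under (\textbf{Sh-E}). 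Waldspurger's formula expresses $a_n^2$ as the product of the central value attached to $E^{(n^\ast)}$, a period factor, and local factors at the primes dividing $N_f$ that depend only on the class of $n$ in $\BQ_\ell^\times/(\BQ_\ell^\times)^2$. I would arrange the period normalisation so that the algebraic value $\CL(n)=\msl_{\CE}(-D^\circ n)=\msl_E(n^\ast)$ appears, writing schematically $a_n^2=c\,\CL(n)\prod_{\ell\mid N_f}\beta_\ell(n)$ with $c$ independent of $n$. When $n$ and $n_0$ are $N_f$-square-linked every $\beta_\ell$ coincides, so $a_n^2/\CL(n)=a_{n_0}^2/\CL(n_0)$; since (\textbf{In-Nv}) forces $\CL(n_0)\neq0$ and both central values are non-negative, taking square roots yields $a_n=\pm a_{n_0}\sqrt{\CL(n)/\CL(n_0)}$. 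Substituting into part (1) proves (2).

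Part (3) estimates the two summands of the formula in (2) separately and combines them by the ultrametric inequality $v_2(x+y)\ge\min(v_2(x),v_2(y))$. For the first summand, the minimality defining $\fA$ gives $v_2\big(\xi a_{n_0}\sqrt{\CL(n)/\CL(n_0)}\big)\ge\fA+\tfrac12 v_2(\CL(n))$; the deep input is the $2$-divisibility theorem for twisted $L$-values, which, because $n^\ast\equiv1\mod4$, yields $v_2(\CL(n))=v_2(\msl_E(n^\ast))\ge t_E(n^\ast)-1-v_2(\nu_E)$ in terms of the $2$-parts of the Tamagawa numbers. Thus the first summand has $v_2\ge\fA+\tfrac12\big(t_E(n^\ast)-1-v_2(\nu_E)\big)$. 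For the second summand, minimality of $\fB$ gives $v_2(\lambda_f(n))\ge\fB$, and genus theory for $\BQ(\sqrt{-Dn})$ gives $v_2(h(-Dn))\ge\mu(D)+\mu(n)-1$, since the $2$-rank of its class group is one less than the number of ramified primes (which is $\mu(D)+\mu(n)$, possibly plus one from the prime $2$). Hence $v_2\big(\lambda_f(n)h(-Dn)\big)\ge\fB+\mu(D)+\mu(n)-1$. Using $t_E(n^\ast)\le\mu(n)$, a short computation shows this is $\ge\fA+\tfrac12\big(t_E(n^\ast)-1-v_2(\nu_E)\big)$ precisely when $\mu(n)\ge2(\fA-\fB-\mu(D))+1-v_2(\nu_E)$, i.e. when $\mu(n)\ge\kappa$. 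Both summands then satisfy the bound, and the ultrametric inequality gives the asserted lower bound for $v_2(r_f(n))$.

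The main obstacle is the precise form of Waldspurger's theorem needed in part (2): one must identify the local factors $\beta_\ell$ and fix the period normalisation so that the algebraic $L$-value $\CL(n)$ emerges and the factors cancel across an $N_f$-square-linked class, including a careful check at the primes dividing $2D$ and at the infinite place. The second serious ingredient is the $2$-divisibility theorem for the algebraic $L$-values of quadratic twists, whose proof supplies the Tamagawa-number lower bound; with these two results in hand, part (3) reduces to the elementary valuation bookkeeping sketched above.
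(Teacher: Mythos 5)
Your proposal is correct and takes essentially the same route as the paper: part (1) by combining the Siegel-formula identity $G_f(n)=\lambda_f(n)h(-Dn)$ with $a_n=r_f(n)-r_g(n)$, part (2) by applying Waldspurger's theorem across the $N_f$-square-linked class to get $a_n=\pm a_{n_0}\sqrt{\CL(n)/\CL(n_0)}$, and part (3) by the $2$-divisibility theorem of \cite{Zhai} (valid since $n^*\equiv 1\bmod 4$) combined with Gauss genus theory for $h(-Dn)$, the bound $t_E(n^*)\le\mu(n)$, and the ultrametric inequality, with the identical threshold computation giving $\kappa$. The only cosmetic difference is that the paper solves the $2\times 2$ linear system by Cramer's rule where you subtract directly, and it packages your two-summand estimate as an intermediate $\min$ bound before showing the class-number term dominates when $\mu(n)\ge\kappa$.
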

We remark in (2) of the above theorem, only one sign appears in the equality. However, at present, we see no way how to determine the precise sign.
Similar results as (3) in the Theorem \ref{main-thm-3} were obtained only for special elliptic curves (e.g. $y^2=x^3-x$) in \cite{WZ} (see also \cite{Qin} for other applications for the same curve).

\medskip

\begin{exm}\label{E7}
Consider the special case \(f=x_1^2+x_2^2+7x_3^2\). Then \(g=x_1^2+2x_2^2+4x_3^2+2x_2x_3\), \(E=X_0(14)\), and \[r_f(n)+2r_g(n)=3\lambda_f(n)h(-7n),\]
with
$$
		\lambda_f(n)=
    		\left\{
    		\begin{aligned}
        		\frac{2}{3} & \quad n\equiv 3\ (mod\ 4)\\
			\frac{8}{3} & \quad n\equiv 1\ (mod\ 8)\\
        		4 & \quad n\equiv 5\ (mod\ 8).\\
    		\end{aligned}
    		\right.
	$$
Our Theorem \ref{main-thm-3} shows for any square free integer \(n\) coprime to \(14\),
\[v_2(r_f(n))\ge \frac{3}{2}+\frac{1}{2}\mu(n).\]
When $n=3705$, numerical computation in Section \S\ref{num-1} shows the lower bound can be obtained.
\end{exm}

\medskip

\subsection{Idea for the proofs.}

The key ingredient in the proof of Theorem \ref{main-thm-1} and Proposition \ref{main-thm-2} is Siegel's mass formula, which says the genus representation number $G_f(n)$ is a product of the local representation densities over all primes. We then make a very delicate study on the diagonal equations over finite fields, based on this, we determine the local representation densities up to a finite set of primes. As a result, the number $G_f(n)$ is related to the Dirichlet $L$-values at non-positive integers. Since the Iwasawa main conjecture holds for abelian extensions over $\BQ$ for all primes, we know the Quillen--Lichtenbaum conjecture holds. Thus Theorem \ref{main-thm-1} follows.
\medskip

For the proof of Theorem \ref{main-thm-3}, besides the above Siegel's formula, we also need two more key ingredients. The first is a special value formula of Waldspurger. Under the condition (\textbf{Sh-E}), we know $\theta_f-\theta_g=\sum_n a_n q^n$ corresponds to elliptic curves over $\BQ$ via Shimura lift. Then Waldspurger formula gives a precise relation between the $L$-values of elliptic curves with the coefficients of $\theta_f-\theta_g$. Thus, under the condition (\textbf{In-Nv}), we can relate $r_f(n)$ to the algebraic $L$-values of quadratic twist of elliptic curves over $\BQ$. Here comes our second key input: by an induction argument by using modular symbols, one of us \cite{Zhai} proved a very general lower bound for the $2$-adic valuations for the algebraic $L$-values of the quadratic twist of optimal elliptic curves over $\BQ$. Based on this bound, we get a $2$-adic lower bound for $r_f(n)$ and prove Theorem \ref{main-thm-3}.

\medskip

In the final part of the introduction, we give the structure of the paper. In the next section \S\ref{PL}, we determinate the local representation densities in the mass formula of Siegel. In section \S\ref{siegel-chp}, we use Siegel's formula to prove Theorem \ref{main-thm-1} and Proposition \ref{main-thm-2}. In section \S\ref{walds}, we give the definition of Shimura lift and the formula of Waldspurger, then we prove Theorem \ref{main-thm-3} using the $2$-adic divisibility result. In section \S\ref{num-1}, we give several numerical examples related to Theorem \ref{main-thm-3}, in particular, some numerical data shows our lower bound is optimal. In the final section \S\ref{ND}, we apply the formula in Theorem \ref{main-thm-1} to the quadratic form $f=x_1^2+\cdots+x^2_d$ with $d=5,9,13$, we then get numerical data for the $K$-groups $K_m(\CO_K)$ for $m=2,6,10$.

\bigskip

\section{Local representation densities}\label{PL}

In this section, we determinate the local representation densities in the mass formula of Siegel. We now fix a diagonal quadratic form
$$
f=b_1x_1^2+\cdots +b_dx_d^2.
$$
Besides Lemma \ref{delta}, we always assume $d$ is an odd positive integer. When $d=3$, we may also write $f$ as $f=b_1x^2+b_2y^2+b_3z^2$. We set $D=b_1b_2\cdots b_d$. We remark that the level $N_f$ is always divisible by $4$. Recall that the density \(\delta_p(n,f)\) of solutions \(f(x)=n\) in \(\mathbb{Z}_p\) is defined to be
$$
\delta_p(n,f)=\lim_{U \to n} \frac{vol(f^{-1}(U))}{vol(U)},
$$
where \(U\) runs through the neighborhoods of \(n\in\mathbb{Z}_p\).

We first deal with the finite prime $p$, in this case,
there exists an integer \(\beta_p(n,f)\) such that for any integer \(\beta\ge \beta_p(n,f)\),
\begin{equation}\label{delta-f-1}
  \delta_p(n,f)=p^{-\beta(d-1)}\cdot|\{x\in (\mathbb{Z}/p^\beta\BZ)^d:f(x)\equiv n\ (mod\ p^\beta)\}|.
\end{equation}
We first determine \(\beta_p(n,f)\) using the following version of Hensel's Lemma.

\begin{lem}[Hensel]\label{H}
Let \(F(x)\) be a polynomial with integer coefficients. If \(r\) is an integer such that \(F(r)\equiv 0\ (mod\ p^k)\) and \(F(r)\not\equiv 0\ (mod\ p^{k+1})\) for some positive integer \(k\), and \(F'(r)\equiv 0\ (mod\ p^\ell)\), \(F'(r)\not\equiv 0\ (mod\ p^{\ell+1})\) for some integer \(\ell<\frac{k}{2}\), then for every integer \(\tau>k-\ell\), there is a unique integer \(s_\tau\) such that
$f(s_\tau)\equiv 0\ (mod\ p^\tau)$ with $s_\tau\equiv r\ (mod\ p^{k-\ell})$.
\end{lem}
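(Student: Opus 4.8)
The plan is to prove the statement by Newton's method of successive approximation, reducing the whole lemma to the analysis of a single improvement step. Write $v$ for the normalised $p$-adic valuation, so that the hypotheses read $v(F(r))=k$, $v(F'(r))=\ell$ with $2\ell<k$. The starting point is the Taylor expansion of $F$ about an integer $a$: for every integer $h$ one has
\[
F(a+h)=F(a)+F'(a)h+h^2\,G(a,h),
\]
where $G(a,h)\in\BZ$ because the divided derivatives $F^{(j)}(a)/j!$ of an integral polynomial at an integer point are themselves integers (expand $(a+h)^i$ by the binomial theorem). This identity is essentially the only input about $F$ that is needed.

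First I would set up the iteration. Suppose an integer $r_m$ has been produced with $v(F(r_m))=:k_m\ge k$ and $v(F'(r_m))=\ell$; the base case is $r_0=r$. Writing $F(r_m)=p^{k_m}u_m$ and $F'(r_m)=p^{\ell}w_m$ with $u_m,w_m$ prime to $p$, I would take the correction $h_m=p^{\,k_m-\ell}t_m$ and choose the integer $t_m$ with $t_m\equiv -u_mw_m^{-1}\pmod p$. Substituting into the Taylor identity gives
\[
F(r_m+h_m)=p^{k_m}\bigl(u_m+w_mt_m\bigr)+h_m^2\,G(r_m,h_m),
\]
so the linear part has valuation $\ge k_m+1$, while the quadratic remainder has valuation $\ge 2(k_m-\ell)$. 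This is where $2\ell<k\le k_m$ enters decisively: it forces $2(k_m-\ell)>k_m$, so the remainder is negligible and $v(F(r_{m+1}))\ge k_m+1$. Applying the Taylor expansion to $F'$ and using $v(h_m)=k_m-\ell>\ell$ shows that $v(F'(r_{m+1}))=\ell$ is preserved. Hence the induction continues, $k_m$ strictly increases, and since $v(r_{m+1}-r_m)=k_m-\ell\ge k-\ell$ the sequence $(r_m)$ is $p$-adically Cauchy; its limit $\alpha\in\BZ_p$ satisfies $F(\alpha)=0$, with $v(\alpha-r)=k-\ell$ (the first correction dominates) and $v(F'(\alpha))=\ell$. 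Reducing $\alpha$ modulo $p^{\tau}$ produces an $s_\tau$ with $F(s_\tau)\equiv 0\pmod{p^{\tau}}$ and $s_\tau\equiv r\pmod{p^{k-\ell}}$.

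For uniqueness I would compare two integers $s,s'$ fulfilling the conclusion. Since $s\equiv r\pmod{p^{k-\ell}}$ and $k-\ell>\ell$, the Taylor expansion of $F'$ gives $v(F'(s))=\ell$; expanding $F(s')-F(s)$ about $s$ yields
\[
F(s')-F(s)=F'(s)(s'-s)+(s'-s)^2\,G(s,s'-s),
\]
whose right-hand side has valuation exactly $\ell+v(s'-s)$ as soon as $v(s'-s)>\ell$, the quadratic term being strictly deeper. As $F(s)$ and $F(s')$ both vanish modulo $p^{\tau}$, this forces $\ell+v(s'-s)\ge\tau$, i.e. $v(s'-s)\ge\tau-\ell$. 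Thus any two admissible solutions agree modulo $p^{\tau-\ell}$, which is the precise sense in which $s_\tau$ is determined; equivalently, $\alpha$ is the unique $p$-adic root in the ball $v(\,\cdot-r\,)>\ell$.

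The main obstacle, and the place demanding care, is the valuation bookkeeping across the iteration: one must verify simultaneously that each step strictly raises $v(F)$ and exactly preserves $v(F')=\ell$, and both facts hinge on the single quantitative hypothesis $\ell<k/2$. Once the step is established with these two invariants, existence, the exact value $v(\alpha-r)=k-\ell$, and the uniqueness precision follow formally. I would also flag that the sharp precision extracted in the uniqueness argument is $p^{\tau-\ell}$ rather than $p^{\tau}$; this is exactly the precision to which the lift is pinned down, and it should be handled consistently with how $s_\tau$ is used in computing the stabilised count in \eqref{delta-f-1}.
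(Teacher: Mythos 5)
The paper contains no proof of this lemma: it is quoted (with the bracketed attribution to Hensel) as a classical result and immediately applied in Lemma \ref{beta-p}, so there is no internal argument to compare yours against. Judged on its own merits, your proof is correct. The integral Taylor identity $F(a+h)=F(a)+F'(a)h+h^2G(a,h)$ is the right input; the Newton step $h_m=p^{k_m-\ell}t_m$ with $t_m\equiv -u_mw_m^{-1}\pmod p$ raises $v(F)$ by at least one, and the hypothesis $k>2\ell$ is used exactly where it must be, namely to make the quadratic remainder negligible (via $2(k_m-\ell)>k_m$) and to preserve $v(F')=\ell$ (via $k_m-\ell>\ell$); the two-solution comparison then gives uniqueness. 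Your reading of ``unique'' as unique modulo $p^{\tau-\ell}$ is the correct interpretation --- as literally stated the uniqueness claim is vacuous --- and it is consistent with how the lemma is consumed in Lemma \ref{beta-p}: in the case $p\nmid 2nD$ one has $\ell=0$, so the lift is pinned down modulo $p^\tau$, while for $p=2$, $\ell=1$ the weaker precision $p^{\tau-1}$ is what the counting there actually uses. Two cosmetic points: writing $F(r_m)=p^{k_m}u_m$ with $u_m$ prime to $p$ tacitly assumes $F(r_m)\neq 0$; if some $F(r_m)=0$ you simply take $h_m=0$ from then on, so nothing is lost. And the passage to the $p$-adic limit $\alpha\in\mathbb{Z}_p$ can be avoided entirely by stopping the iteration as soon as $k_m\geq\tau$ and setting $s_\tau=r_m$; this is marginally more elementary and yields the statement for each fixed $\tau$ directly, though your limit formulation buys the cleaner reformulation that $\alpha$ is the unique $p$-adic root with $v(\alpha-r)>\ell$.
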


\medskip

\begin{lem}\label{beta-p}
Let $n$ be a square free positive integer which is coprime to $2D$. Then we can take
\[\beta_p(n,f)=
\begin{cases}
1,\quad &\textrm{if $p\nmid 2nD$}\\
2,\quad &\textrm{if $p\mid n$.}
\end{cases}\]
Assume that $d=3$, $\gcd(b_1,b_2,b_3)=1$ and $n$ is coprime to $2D$, we can take $\beta_2(n,f)=3$.
\end{lem}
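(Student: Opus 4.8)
The unifying device I would use is a single‑step lifting count. For a solution $x$ of $f(x)\equiv n\pmod{p^\beta}$ and $t\in(\BZ/p\BZ)^d$, I expand
\[ f(x+p^\beta t)=f(x)+p^\beta\sum_{i=1}^d 2b_ix_it_i+p^{2\beta}\sum_{i=1}^d b_it_i^2 . \]
Writing $N(\beta)$ for the number of solutions mod $p^\beta$, any $\beta_0$ such that every solution mod $p^\beta$ lifts to exactly $p^{d-1}$ solutions mod $p^{\beta+1}$ for all $\beta\ge\beta_0$ yields $N(\beta+1)=p^{d-1}N(\beta)$, which is exactly the stabilisation of the normalised count in \eqref{delta-f-1}, so $\beta_p(n,f)$ may be taken to be such a $\beta_0$. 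For odd $p$ and $\beta\ge1$ the term $p^{2\beta}(\cdots)$ vanishes mod $p^{\beta+1}$, so writing $f(x)-n=p^\beta c$ the admissible lifts are governed by the linear congruence $c+\sum_i 2b_ix_it_i\equiv0\pmod p$, which has exactly $p^{d-1}$ solutions precisely when the reduction of the gradient $(2b_1x_1,\dots,2b_dx_d)$ is nonzero mod $p$. Thus I would reduce everything to controlling the \emph{singular} solutions, i.e.\ those at which this gradient vanishes.

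For $p\nmid 2nD$ the prime is odd and coprime to every $b_i$, so a vanishing gradient forces $x\equiv0\pmod p$ and then $f(x)\equiv0\not\equiv n\pmod p$; hence there are no singular solutions at all and the count stabilises already at $\beta=1$. For $p\mid n$ (again odd, with $p\nmid D$ since $n$ is coprime to $2D$) a singular solution still forces $x\equiv0\pmod p$, whence $p^2\mid f(x)$; but $f(x)\equiv n\pmod{p^\beta}$ with $\beta\ge2$ and $p\| n$ (as $n$ is square free) then gives $p^2\mid n$, a contradiction. So for $\beta\ge2$ every solution is nonsingular and the count stabilises, giving $\beta_p(n,f)=2$. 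In both cases the refined Hensel Lemma \ref{H}, applied with $\ell=0$ in a coordinate where $x_i$ is a unit, legitimises the unique lifting underlying the count.

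The crux is $p=2$ with $d=3$, where this mechanism degenerates: since $2^\beta\cdot 2b_ix_it_i=2^{\beta+1}b_ix_it_i\equiv0\pmod{2^{\beta+1}}$, the linear term carries no information. Concretely, for $\beta\ge1$ one has $f(x+2^\beta t)\equiv f(x)\pmod{2^{\beta+1}}$ for all $t$, so $f(\bar x)\bmod 2^{\beta+1}$ is well defined on solutions $\bar x$ mod $2^\beta$, and each $\bar x$ lifts to either $2^d=8$ solutions (if $f(\bar x)\equiv n$) or none. My plan is to show that for $\beta\ge3$ exactly half of the solutions are liftable, via a fixed‑point‑free involution. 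Since $n$ and all $b_i$ are odd, $x_1^2+x_2^2+x_3^2\equiv n$ is odd, so some $x_i$ is odd; I let $i$ be the least such index and set $\iota(\bar x)=\bar x+2^{\beta-1}e_i$. As $2^{\beta-1}$ is even, $\iota$ preserves the parity pattern and is a fixed‑point‑free involution on the solution set mod $2^\beta$, while the computation $f(\bar x+2^{\beta-1}e_i)-f(\bar x)=2^\beta b_ix_i+2^{2\beta-2}b_i\equiv 2^\beta\pmod{2^{\beta+1}}$ (valid once $2\beta-2\ge\beta+1$, i.e.\ $\beta\ge3$, using $b_ix_i$ odd) shows $\iota$ interchanges the liftable and non‑liftable solutions. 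Hence half are liftable, each contributing $8$ lifts, so $N(\beta+1)=8\cdot\tfrac12 N(\beta)=2^{d-1}N(\beta)$ and $\beta_2(n,f)=3$.

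The essential difficulty will be precisely the prime $p=2$: the gradient/Hensel mechanism that trivialises the odd primes collapses because the linear term is divisible by $2^{\beta+1}$, and one must instead produce the exact factor $2^{d-1}$ by the combinatorial involution. Pinning down that $\beta=3$ (and not $2$) is the correct threshold—that is, that the stray term $2^{2\beta-2}b_i$ is killed mod $2^{\beta+1}$ exactly when $\beta\ge3$—is the delicate point, and it is also where the hypotheses that $n$ is odd, the $b_i$ are odd, and $n$ is square free are all genuinely used.
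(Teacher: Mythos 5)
Your proof is correct. For the odd primes it is essentially the paper's argument: both proceed by single-step lifting of solutions from modulus $p^\beta$ to $p^{\beta+1}$, with the count of exactly $p^{d-1}$ lifts per solution guaranteed by the non-vanishing mod $p$ of the gradient $(2b_1x_1,\dots,2b_dx_d)$, which in turn follows from square-freeness of $n$ and coprimality to $2D$ (the paper phrases this as splitting the congruence into the congruence one level down plus a linear congruence mod $p$ in the new digits $z_i$ — exactly your linear congruence in $t$). Where you genuinely diverge is at the crux case $p=2$, $d=3$: the paper keeps the same lifting framework but jumps two binary digits at once, writing $x_i=y_i+2^\beta z_i$, working modulo $2^{\beta+1}$ and $2^{\beta+2}$, and using that some $y_i$ must be odd (since $n$ is odd) so that the surviving linear congruence $b_1y_1z_1+b_2y_2z_2+b_3y_3z_3\equiv c \pmod 2$ has density $1/2$; this relates $N(\beta+2)$ to $N(\beta+1)$ through the count of residues mod $2^\beta$ satisfying the congruence mod $2^{\beta+1}$. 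You instead stay with one-digit lifting, observe that each solution mod $2^\beta$ lifts to all $2^d$ of its preimages or to none, and produce the exact factor $1/2$ by the fixed-point-free involution $\bar x\mapsto \bar x+2^{\beta-1}e_i$, which swaps liftable with non-liftable solutions precisely once $\beta\ge 3$. The two devices encode the same underlying fact, but yours is self-contained and makes the threshold $\beta=3$ completely transparent (the stray term $2^{2\beta-2}b_i$ must vanish mod $2^{\beta+1}$), whereas the paper's version is terser, leaves the final counting step (``by the same method as above'') to the reader, and obtains the factor $1/2$ from the linear congruence rather than from a pairing of solutions.
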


\begin{proof}
For the first assertion, we only prove the case when $p\nmid 2nD$, the other case is similar. Note the equation
\begin{equation}\label{beta-p-f-1}
  b_1x_1^2+\cdots+b_dx^2_d\equiv n\mod p.
\end{equation}
Under our assumption $p\nmid 2Dn$, if there is a $b_i$, say $b_1$, which is a quadratic residue modulo $p$,
by taking $x_2=\cdots=x_d=0$ and $F(x_1)=b_1x^2_1-n$ in Lemma \ref{H}, we can let $k=1,\ell=0$ in Lemma \ref{H}; if all $b_i$'s are quadratic non residue modulo $p$, via considering an equivalence equation to \eqref{beta-p-f-1}
\[x^2_1+\frac{b_2}{b_1}x_2^2+\cdots+\frac{b_d}{b_1}x^2_d\equiv \frac{n}{b_1}\mod p,\]
we reduce to the previous case, thus we can also let $k=1,\ell=0$ in Lemma \ref{H}. Now we show that $\beta_p(n,f)=1$. In fact,  for integer \(\beta>1\) and $x_i\in\BZ(1\leq i\leq d)$, we
let
$$x_i=y_i+p^{\beta-1}z_i,\quad n=n_0+p^{\beta-1}n_1,$$
where $n_1, z_i(1\leq i\leq d)$ are integers and $y_i,n_0\in \BZ\cap[0,p^{\beta-1}-1]$.
Then the equation
$$b_1x_1^2+...+b_dx_d^2\equiv n\ mod\ p^{\beta}$$
implies
\begin{equation}\label{beta-p-f-2}
  \left\{
    		\begin{aligned}
        		& b_1y_1^2+..+cy_d^2\equiv n_0\ (mod\ p^{\beta-1})\\
			& 2b_1y_1z_1+...+2b_dy_dz_d\equiv \frac{n_0-(b_1y_1^2+..+cy_d^2)}{p^{\beta-1}}+n_1\ (mod\ p)
    		\end{aligned}
    		\right.
\end{equation}
Note that \(n\) is square free, the \(y_i\)'s cannot be divisible by $p$ at the same time. Thus for a fixed non-zero root \((y_1,...,y_d)\) of \(b_1y_1^2+...+b_dy_d^2\equiv n_0\ (mod\ p^{\beta-1})\), we can lift it to \((x_1,...,x_d)\) via the second congruence in \eqref{beta-p-f-2}, and there are \(p^{d-1}\) of such lift.
Thus for $\beta>1$, the number in the right hand of the equality \eqref{delta-f-1} is stable, which must be $\delta_p(f,n)$. Therefore
\(\beta_p(f)=1\).

When $d=3$, by our assumption, we can take $\ell=1,k=3$ in Lemma \ref{H}. By the following modification to the method on the lifting solutions given above,
i.e., for $\beta>1$, we let
\[n=n_0+4p^{\beta-1}n_1\quad x_i=y_i+2p^{\beta-1}z_i,\quad 1\leq i\leq 3.\]
Here, all $n_0,n_1,y_i,z_i$ are integers with $n_0\in\BZ\cap[0,2^{\beta+1}-1]$ and $y_i\in\BZ\cap[0,2^\beta]$. We consider the congruences via modulo $2^{\beta+2}$ and $2^{\beta+1}$, and the second congruence in \eqref{beta-p-f-2} becomes
\[b_1y_1z_1+b_2y_2z_2+b_3y_3z_3\equiv n_1+\frac{n_0-(b_1y^2_1+b_2y^2_2+b_3y^2_3)}{2^{\beta+1}}\mod 2.\]
Then by the same method as above
we can take $\beta_p(f,n)=3$.

\end{proof}

To determine the local densities $\delta_p(f,n)$, we need a simple result on the sum of quadratic characters.

\begin{lem}\label{q-sum}
Let \(p\) be an odd prime, then
	$$
		\sum\limits_{x=1}^p \left(\frac{x^2+a}{p}\right)=
    		\begin{cases}
        		 -1 &\textrm{if $p\nmid a$}\\
			 p-1 &\textrm{if  $p|a$.}
    		\end{cases}
    		$$
\end{lem}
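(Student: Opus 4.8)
The plan is to evaluate the character sum
\[
S(a)=\sum_{x=1}^{p}\left(\frac{x^2+a}{p}\right)
\]
by a direct counting argument, relating it to the number of points on the conic $y^2=x^2+a$ over the finite field $\BF_p$. First I would recall the standard fact that for an odd prime $p$ and any integer $t$, the number of solutions $y\in\BF_p$ to $y^2\equiv t\pmod p$ equals $1+\left(\frac{t}{p}\right)$, with the convention $\left(\frac{0}{p}\right)=0$. Summing this identity over all $x=1,\dots,p$ with $t=x^2+a$ gives
\[
\sum_{x=1}^{p}\Bigl(1+\left(\tfrac{x^2+a}{p}\right)\Bigr)=p+S(a),
\]
so that $S(a)=N-p$, where $N$ is the number of pairs $(x,y)\in\BF_p^2$ satisfying $y^2-x^2\equiv a\pmod p$.

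The heart of the proof is then to count $N$. I would factor $y^2-x^2=(y-x)(y+x)$ and perform the linear change of variables $u=y-x$, $v=y+x$. Since $p$ is odd, $2$ is invertible, so this substitution is a bijection of $\BF_p^2$ onto itself, and the equation becomes $uv\equiv a\pmod p$. Counting the solutions of $uv\equiv a$ is elementary: if $p\nmid a$, then $a$ is a unit, $u$ ranges over the $p-1$ nonzero residues, and $v=a u^{-1}$ is determined, giving exactly $N=p-1$ solutions; hence $S(a)=(p-1)-p=-1$. If $p\mid a$, then $uv\equiv 0$, which forces $u=0$ or $v=0$; counting these (with inclusion–exclusion to avoid double-counting $(0,0)$) gives $N=2p-1$, so $S(a)=(2p-1)-p=p-1$. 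This matches both cases of the claimed formula.

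I expect the only real subtlety, rather than an obstacle, to be bookkeeping with the degenerate zero term: one must be careful that $\left(\frac{0}{p}\right)=0$ so that the identity $\#\{y:y^2=t\}=1+\left(\frac{t}{p}\right)$ remains correct when $t\equiv 0$, and one must not double-count the origin when $p\mid a$. An alternative I would keep in reserve, in case a more self-contained argument is preferred, is to complete the computation without the geometric interpretation: when $p\nmid a$, substitute $x^2+a=x^2(1+a x^{-2})$ for $x\ne 0$ and use multiplicativity of the Legendre symbol together with the vanishing $\sum_{t\in\BF_p^\times}\left(\frac{1+t}{p}\right)=-1$; but the conic-counting approach via $uv\equiv a$ is cleaner and handles both cases uniformly, so that is the route I would write up.
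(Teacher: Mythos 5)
Your proof is correct and follows essentially the same route as the paper: both relate the sum to the number of solutions of $y^2-x^2\equiv a\pmod p$ via the identity $\#\{y:y^2\equiv t\}=1+\left(\frac{t}{p}\right)$, and then count that conic's points ($p-1$ or $2p-1$ according as $p\nmid a$ or $p\mid a$). The only difference is that you spell out the counting step with the substitution $u=y-x$, $v=y+x$, which the paper leaves implicit.
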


\begin{proof}
Note that \(y^2-x^2\equiv a\ (mod\ p)\) has \(p-1\) solutions \((x,y)\) if \(p\nmid a\) and has \(2p-1\) solutions if \(p|a\). Thus the result follows from that the cardinality of the set
$\{x\mod p:x^2\equiv a\mod p\}$ is $1+\left(\frac{a}{p}\right)$.
\end{proof}

Now we can determine the $\delta_p(f,n)$ in the following lemmas. As remarked before, this lemma is the only place in the paper where we may assume $d$ is even.
\begin{lem}\label{delta}
Recall that \(D=b_1...b_d\). Let \(n\) be a square free positive integer prime to \(2D\), then for all primes \(p\nmid 2nD\), we have
	$$\delta_p(n,f)=
    		\begin{cases}
        		 1-p^{-\frac{d}{2}}\left(\frac{(-1)^{\frac{d}{2}}D}{p}\right) & \textrm{if $2|d$,}\\
        		 1+p^{\frac{1-d}{2}}\left(\frac{(-1)^{\frac{d-1}{2}}nD}{p}\right) & \textrm{if $2\nmid d$.}
    		\end{cases}$$
\end{lem}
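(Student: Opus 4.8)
The plan is to compute $\delta_p(n,f)$ for $p \nmid 2nD$ by exploiting the stabilization already established in Lemma \ref{beta-p}, which gives $\beta_p(n,f)=1$. Thus by \eqref{delta-f-1} it suffices to count solutions modulo $p$:
\[
\delta_p(n,f) = p^{-(d-1)} \cdot N, \qquad N = |\{x \in (\BZ/p\BZ)^d : b_1 x_1^2 + \cdots + b_d x_d^2 \equiv n \bmod p\}|.
\]
So the whole problem reduces to a classical point-count of a diagonal quadratic form over the finite field $\BF_p$. First I would set up the count using multiplicative characters: the number of $x_i \in \BF_p$ with $b_i x_i^2 \equiv t$ is $1 + \left(\frac{b_i t}{p}\right)$ for $t \neq 0$ and $1$ for $t=0$, so I would write $N$ as a convolution. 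The cleanest bookkeeping is to introduce the Gauss-sum / character-sum machinery and induct on $d$, peeling off one variable at a time.

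The key computational engine is Lemma \ref{q-sum}, which evaluates $\sum_{x} \left(\frac{x^2+a}{p}\right)$. The natural inductive step is: if $g(x_1,\dots,x_{d-1}) = b_1 x_1^2 + \cdots + b_{d-1}x_{d-1}^2$, then
\[
N_d(n) = \sum_{m \bmod p} N_{d-1}(m)\cdot\bigl(1 + \tfrac{1}{\,}\bigr)\cdots
\]
where I relate the count for $d$ variables to that for $d-1$ variables by summing over the value of the last variable's contribution. Carrying this through, each step multiplies by $p$ and produces a character-sum correction governed by Lemma \ref{q-sum}. Tracking the accumulated Legendre symbol $\left(\frac{b_1\cdots b_d}{p}\right) = \left(\frac{D}{p}\right)$ together with the sign factors $\left(\frac{-1}{p}\right)^{\lfloor d/2\rfloor}$ coming from the quadratic Gauss sum is the heart of the calculation. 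For $d$ even the main term $p^{d-1}$ is corrected by a term of size $p^{d/2-1}$ carrying the symbol $\left(\frac{(-1)^{d/2}D}{p}\right)$, and for $d$ odd the correction has size $p^{(d-1)/2}$ and additionally picks up a factor $\left(\frac{n}{p}\right)$ because the central count distinguishes whether $n$ is a square. Dividing by $p^{d-1}$ yields the two stated formulas.

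The cleanest way to organize the induction is to prove simultaneously the standard formula for the number of representations of an element of $\BF_p^\times$ by a nondegenerate diagonal quadratic form in $d$ variables, namely
\[
N_d(n) = p^{d-1} + v(n),
\]
where $v(n)$ is an explicit character term depending on the parity of $d$. For even $d$ one gets $v(n)$ independent of $n$ (it depends only on the discriminant), reflecting that even-dimensional forms represent every nonzero value equally often up to a global correction; for odd $d$ the correction is $\left(\frac{n}{p}\right)$ times a discriminant factor, which is exactly where the $\left(\frac{nD}{p}\right)$ in the odd case comes from.

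The main obstacle will be bookkeeping the signs correctly: the quadratic Gauss sum $\sum_x \left(\frac{x}{p}\right) e^{2\pi i x/p}$ equals $\sqrt{p}$ or $i\sqrt{p}$ according to $p \bmod 4$, and these $i$'s combine across the $d$ variables to produce the factor $\left(\frac{-1}{p}\right)^{\lfloor (d-1)/2\rfloor}$, i.e. the $(-1)^{d/2}$ or $(-1)^{(d-1)/2}$ inside the Legendre symbol. Getting this exponent of $-1$ exactly right, and confirming that the $\left(\frac{n}{p}\right)$ factor appears only in the odd case, is the delicate point; everything else is the routine convolution driven by Lemma \ref{q-sum}. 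I would double-check the signs on small cases, e.g. $d=2$ (two squares, recovering the classical count governed by $\left(\frac{-D}{p}\right)$) and $d=1$, before asserting the general parity-dependent formula.
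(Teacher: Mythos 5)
Your overall strategy is the same as the paper's: use $\beta_p(n,f)=1$ from Lemma \ref{beta-p} to reduce to a point count modulo $p$, then induct on the number of variables with Lemma \ref{q-sum} as the computational engine. However, as written your induction has a genuine gap. Peeling off \emph{one} variable at a time forces you to evaluate the convolution
$$
N_d(n)=\sum_{m \bmod p} N_{d-1}(m)\cdot\left(1+\left(\tfrac{b_d(n-m)}{p}\right)\right),
$$
and this sum unavoidably involves the term $m=0$, i.e.\ the number $N_{d-1}(0)$ of zeros of the $(d-1)$-variable form. Your stated inductive hypothesis ("the number of representations of an element of $\BF_p^\times$") does not cover this, and the zero count obeys a genuinely different formula: for a nondegenerate diagonal form in $k$ variables it is $p^{k-1}$ when $k$ is odd but $p^{k-1}+(p-1)p^{k/2-1}\left(\frac{(-1)^{k/2}\Delta}{p}\right)$ when $k$ is even. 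So the induction as you set it up does not close; you would need to strengthen the hypothesis to track $N_{d-1}(0)$ alongside the nonzero counts. The paper sidesteps exactly this issue by peeling off \emph{two} variables per step: after writing the count as $1+p^{1-d}\left(\frac{-b_1b_2}{p}\right)\sum\left(\frac{x_2^2+\fD}{p}\right)$ and applying Lemma \ref{q-sum} to the sum over $x_2$, the condition $p\mid\fD$ that emerges is precisely "the $(d-2)$-variable form represents $n$" --- the same nonzero $n$ --- so the inductive hypothesis applies verbatim and no zero counts are ever needed.

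Two further points. First, your central displayed formula is left unfinished, and the sign bookkeeping --- which you yourself call the delicate point --- is never actually carried out, so the proposal is a plan rather than a proof. Second, the appeal to quadratic Gauss sums ($\sqrt{p}$ versus $i\sqrt{p}$) is unnecessary and muddies the argument: in the paper all the $\left(\frac{-1}{p}\right)$ factors arise elementarily, from rewriting $\fE=-\frac{b_2}{b_1}(x_2^2+\fD)$ before invoking Lemma \ref{q-sum}, with no exponential sums anywhere. Your plan is repairable (the one-variable-at-a-time count with the zero case included is classical), but as stated the inductive step fails.
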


\begin{proof}
By Lemma \ref{beta-p}, we know \(\beta_p(n,f)=1\). We only prove the case for $d$ odd, the other case can be done in the same way. The main idea is by induction on $d$.
For \(d=1\), since $p$ is coprime to $D=b_1$, we have
\[\delta_p(n,f)= |\{x\in \mathbb{Z}/p\BZ: x^2\equiv b_1n \mod p)\}|
=1+\left(\frac{b_1n}{p}\right).\]
We assume the lemma holds for \(1,3,...,d-2\). For \(d\), recall that
\[\delta_p(n,f)=p^{1-d}\cdot |\{x\in (\mathbb{Z}/p\BZ)^d: f(x)\equiv n \mod p)\}|.\]
Since $p$ is coprime to $2D$, the congruence
\[b_1x^2_1+\cdots+b_dx^2_d\equiv n\mod p\]
is equivalent to
\[x^2_1\equiv \frac{n}{b_1}-\frac{b_2}{b_1}x^2_2-\cdots-\frac{b_d}{b_1}x^2_d \mod p.\]
Set $\fE=\frac{n}{b_1}-\frac{b_2}{b_1}x^2_2-\cdots-\frac{b_d}{b_1}x^2_d$. Note that the number of solutions $x^2_1\equiv \fE\mod p$ for any $(x_2,\cdots,x_d)\in(\BZ/p\BZ)^{d-1}$ is equal to $1+\left(\frac{\fE}{p}\right)$. Thus we obtain
\begin{equation}\label{delta-p-f-1}
 \begin{aligned}\delta_p(n,f)=&p^{1-d}\sum^p_{x_2,\cdots,x_d=1}\left(1+\left(\frac{\fE}{p}\right)\right)\\
=&1+p^{1-d}\cdot\left(\frac{-b_1b_2}{p}\right)\sum^p_{x_2,\cdots,x_d=1}\left(\frac{x^2_2+\fD}{p}\right).\end{aligned}
\end{equation}
Here we denote $\fD=\frac{b_3}{b_2}x^2_3+\cdots+\frac{b_d}{b_2}x^2_d-\frac{n}{b_2}$ and use the relation $\fE=-\frac{b_2}{b_1}(x^2_2+\fD)$.
Now we apply Lemma \ref{q-sum} to the last sum in \eqref{delta-p-f-1}, we get
\begin{equation}\label{delta-p-f-2}
\sum^p_{x_2,\cdots,x_d=1}\left(\frac{x^2_2+\fD}{p}\right)=\sum^p_{x_3,\cdots,x_d=1}\sum^p_{x_2=1}\left(\frac{x^2_2+\fD}{p}\right)
=-p^{d-2}+\sum^p_{x_3,\cdots,x_d=1, p\mid \fD}p.
\end{equation}
Equivalently, we have
\[\begin{aligned}
&\sum^p_{x_2,\cdots,x_d=1}\left(\frac{x^2_2+\fD}{p}\right)+p^{d-2}\\
=&p\cdot|\{(x_3,\cdots,x_d)\in(\BZ/p\BZ)^{d-2}:b_3x^2_3+\cdots+b_dx^2_d\equiv n \mod p\}|.
\end{aligned}\]
Now applying the induction hypothesis to the above set
and put the above formula back into \eqref{delta-p-f-2}, we obtain the required result for $\delta_p(n,f)$.
\end{proof}

\begin{lem}\label{delta2}
Assume \(d\) is odd and $n$ is a square free positive integer with \((n,2D)=1\). For any odd prime \(p\) satisfying \(p|n\), we have
\[\delta_p(n,f)=1-\frac{1}{p^{d-1}}\]
\end{lem}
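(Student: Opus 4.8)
The plan is to reduce the computation to a count modulo $p^2$. Since $p \mid n$, Lemma \ref{beta-p} lets us take $\beta_p(n,f)=2$, so by \eqref{delta-f-1}
\[
\delta_p(n,f) = p^{-2(d-1)} \cdot \left|\{x \in (\BZ/p^2\BZ)^d : f(x) \equiv n \pmod{p^2}\}\right|.
\]
I would organise the count by the reduction $\bar x \in \BF_p^d$ of a solution $x$ modulo $p$. Because $p \mid n$, every such $\bar x$ satisfies $f(\bar x) \equiv 0 \pmod p$, so the first task is to count the solutions of $f \equiv 0 \pmod p$ and then to determine how many lifts to modulo $p^2$ each one admits.

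First I would show that $f \equiv 0 \pmod p$ has exactly $p^{d-1}$ solutions over $\BF_p$. Writing $N(c)$ for the number of $x \in \BF_p^d$ with $f(x) \equiv c$, Lemma \ref{delta} (via $\delta_p(c,f) = p^{1-d}N(c)$) gives, for $c \not\equiv 0$,
\[
N(c) = p^{d-1} + p^{(d-1)/2}\left(\frac{(-1)^{(d-1)/2} c D}{p}\right).
\]
Summing over $c \neq 0$ and using $\sum_{c \neq 0}\left(\frac{c}{p}\right)=0$ together with $\sum_c N(c) = p^d$ then forces $N(0) = p^{d-1}$.

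Next I would separate the singular and nonsingular reductions. Since $p \nmid 2D$, the gradient $\nabla f(\bar x) = (2b_1\bar x_1,\ldots,2b_d\bar x_d)$ vanishes modulo $p$ only at $\bar x = 0$; hence among the $p^{d-1}$ solutions of $f \equiv 0 \pmod p$ there is one singular point (the origin) and $p^{d-1}-1$ nonsingular ones. For a nonsingular $\bar x$, writing $x = \bar x + py$ and expanding $f(\bar x + py) \equiv f(\bar x) + p\,\nabla f(\bar x)\cdot y \pmod{p^2}$ turns the lifting condition into a single nonzero linear congruence in $y$ over $\BF_p$, which has exactly $p^{d-1}$ solutions. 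Thus each nonsingular solution contributes $p^{d-1}$ solutions modulo $p^2$.

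The decisive point, which I expect to be the main obstacle, is the origin. Any lift has the form $x = py$, so $f(x) = p^2 f(y) \equiv 0 \pmod{p^2}$, which can never be congruent to $n$ modulo $p^2$ because $n$ is square free and divisible by $p$, whence $p^2 \nmid n$. This is precisely where the square-free hypothesis on $n$ enters, and it shows the origin contributes no solutions. Combining the contributions gives a total of $(p^{d-1}-1)\,p^{d-1}$ solutions modulo $p^2$, so that
\[
\delta_p(n,f) = p^{-2(d-1)}(p^{d-1}-1)p^{d-1} = 1 - \frac{1}{p^{d-1}},
\]
as claimed.
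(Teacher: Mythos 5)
Your proposal is correct and is essentially the paper's own proof: both take $\beta_p(n,f)=2$, reduce to counting solutions of $f\equiv 0\pmod p$ over $\BF_p$, lift each of the $p^{d-1}-1$ nonzero (nonsingular) solutions in exactly $p^{d-1}$ ways via the linear congruence coming from the gradient, and invoke the square-freeness of $n$ to rule out lifts of the origin, giving the total $(p^{d-1}-1)p^{d-1}$. The only cosmetic difference is how the count $p^{d-1}$ of solutions of $f\equiv 0\pmod p$ is obtained --- you deduce it by summing the formula of Lemma \ref{delta} over nonzero residues and subtracting from $p^d$, while the paper reruns the character-sum induction of Lemma \ref{delta} at the value $0$.
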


\begin{proof}
By Lemma \ref{beta-p}, we know $\beta_p(n,f)=2$. We consider the congruence
\begin{equation}\label{delta2-f-2}
  b_1x^2_1+\cdots+b_d x^2_d\equiv 0\mod p^2.
\end{equation}
Here we note that $p\mid n$. Write $x_i=y_i+pz_i(1\leq i\leq d)$ with $y_i,z_i\in\BZ$ and $y_i\in\BZ\cap[0,p-1]$. By the same lifting method as in Lemma \ref{beta-p}, we know the solution of the congruence \eqref{delta2-f-2} is equivalent to the following two congruences
\begin{equation}\label{delta2-f-1}
  \left\{
    		\begin{aligned}
        		& b_1y_1^2+..+b_dy_d^2\equiv 0\ (mod\ p)\\
			& 2b_1y_1z_1+...+2b_dy_dz_d\equiv \frac{-(b_1y_1^2+..+cy_d^2)}{p} \mod p
    		\end{aligned}
    		\right.
\end{equation}
Note that $n$ is square free, so all the $y_i(1\leq i\leq d)$'s can not be divisible by $p$.
Thus the number of solutions in $(\BZ/p^2\BZ)^d$ of the congruence \eqref{delta2-f-2} is $p^{d-1}$ times the number of non-zero solutions in $(\BZ/p\BZ)^d$ of the congruence
\[b_1y_1^2+..+b_dy_d^2\equiv 0 \mod p.\]
Now applying the same method in Lemma \ref{delta}, we have
\[|\{(y_1,\cdots,y_d)\in(\BZ/p\BZ)^d:b_1y_1^2+..+b_dy_d^2\equiv 0 \mod p\}|=p^{d-1}.\]
Therefore we get
\[|\{(x_1,\cdots,x_d)\in(\BZ/p^2\BZ)^d:b_1x_1^2+..+b_dx_d^2\equiv 0 \mod p^2)\}|=p^{d-1}(p^{d-1}-1),\]
and $\delta_p(n,f)=1-\frac{1}{p^{d-1}}$.

\end{proof}

It remains to consider the infinite prime case, i.e., $p=\infty$. In this case, we use the usual Lebesgue measure.
Recall that $D=b_1b_2\cdots b_d$. Write $\Gamma(s)$ for the Gamma function.

\begin{lem}\label{delta-inf}
For \(p=\infty\), \(\delta_\infty(n,f)=\frac{\pi^{\frac{d}{2}} n^{\frac{d}{2}-1}}{\Gamma(\frac{d}{2})\sqrt{D}}\).
\end{lem}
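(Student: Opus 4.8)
The plan is to compute the archimedean density $\delta_\infty(n,f)$ directly from its definition \eqref{1-1-f-3} as a limit of volume ratios, reducing the $d$-dimensional calculation to a surface-area computation on the ellipsoid $f(x)=n$. First I would observe that, since $f$ is a positive definite diagonal form, the change of variables $x_i = u_i/\sqrt{b_i}$ sends $f(x)=b_1x_1^2+\cdots+b_dx_d^2$ to the standard sum of squares $u_1^2+\cdots+u_d^2$, with Jacobian factor $\prod_i b_i^{-1/2} = D^{-1/2}$. Thus the problem is immediately reduced to the isotropic case $f_0 = u_1^2 + \cdots + u_d^2$, and the density for $f$ equals $D^{-1/2}$ times the density for $f_0$, explaining the $\sqrt{D}$ in the denominator.

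Next I would handle the standard form $f_0$. The density $\delta_\infty(n,f_0)$ is by definition $\lim_{U\to n} \mathrm{vol}(f_0^{-1}(U))/\mathrm{vol}(U)$ where $U$ shrinks to $n$. Taking $U=[n,n+\epsilon]$, the set $f_0^{-1}(U)$ is the spherical shell $\{u : n \le |u|^2 \le n+\epsilon\}$, whose Euclidean volume is $V_d\big((n+\epsilon)^{d/2} - n^{d/2}\big)$, where $V_d = \pi^{d/2}/\Gamma(\tfrac{d}{2}+1)$ is the volume of the unit $d$-ball. Dividing by $\mathrm{vol}(U)=\epsilon$ and letting $\epsilon\to 0$ gives the derivative $\frac{d}{dn}\big(V_d n^{d/2}\big) = V_d \cdot \tfrac{d}{2}\, n^{d/2-1}$. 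Using the identity $\tfrac{d}{2}/\Gamma(\tfrac{d}{2}+1) = 1/\Gamma(\tfrac{d}{2})$, this simplifies to $\pi^{d/2} n^{d/2-1}/\Gamma(\tfrac{d}{2})$, which is exactly the claimed value for $f_0$.

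Combining the two steps, I would conclude
\[
\delta_\infty(n,f) = \frac{1}{\sqrt{D}}\,\delta_\infty(n,f_0) = \frac{\pi^{d/2}\, n^{d/2-1}}{\Gamma(\tfrac{d}{2})\sqrt{D}},
\]
as asserted. I do not expect any serious obstacle here; the only point requiring a little care is justifying that the limit defining $\delta_\infty$ is independent of the shape of the shrinking neighborhoods $U$ and equals the one-sided derivative computed above. This follows because the function $t\mapsto \mathrm{vol}(f^{-1}((-\infty,t])) = V_d D^{-1/2} t^{d/2}$ (for $t>0$) is continuously differentiable near $t=n>0$, so the ratio $\mathrm{vol}(f^{-1}(U))/\mathrm{vol}(U)$ converges to its derivative regardless of how the intervals $U$ contract to $n$; the positivity $n>0$ guarantees we are away from the singular point $t=0$ where smoothness fails. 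This smoothness remark is precisely the archimedean analogue of the stabilization of the ratio noted after \eqref{1-1-f-3}.
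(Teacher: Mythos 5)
Your proof is correct and follows essentially the same route as the paper: a linear change of variables producing the factor $1/\sqrt{D}$, the ball-volume formula $\pi^{d/2}/\Gamma(\tfrac{d}{2}+1)$ applied to a thin shell, and differentiation of $t^{d/2}$ at $t=n$ combined with $\Gamma(s+1)=s\Gamma(s)$. The only differences are cosmetic (the paper uses a symmetric interval of radius $R<n$ rather than a one-sided one), and your closing remark on independence of the shape of the shrinking neighborhoods is in fact slightly more careful than the paper's argument.
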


\begin{proof}
Let \(U\) be open interval in $\BR$ centered at \(n\) with radius \(R<n\).
Then
\[\begin{aligned}
vol(f^{-1}(U))=&\int_{|b_1x_1^2+...+b_dx_d^2-n|<R} dx_1\cdots dx_d\\
=&\frac{1}{\sqrt{D}}\int_{|x_1^2+...+x_d^2-n|<R} dx_1\cdots dx_d\\
=&\frac{\pi^{\frac{d}{2}}}{\Gamma(\frac{d}{2}+1)\sqrt{D}}((n+R)^{\frac{d}{2}}-(n-R)^{\frac{d}{2}}).
\end{aligned}\]
Here the second equality follows by the change of variable formula and the third equality follows from the volume of ball with radius $R$ in $\BR^d$ is equal to
\[\frac{\pi^{\frac{d}{2}}}{\Gamma\left(\frac{d}{2}+1\right)}R^d.\]
Note that $vol(U)=2R$ and $\Gamma(s+1)=s\Gamma(s)$, thus we have
\[\delta_\infty(n,f)=\frac{\pi^{\frac{d}{2}}}{\Gamma(\frac{d}{2}+1)\sqrt{D}} \lim\limits_{R\to 0} \frac{(n+R)^{\frac{d}{2}}-(n-R)^{\frac{d}{2}}}{2R}=\frac{\pi^{\frac{d}{2}} n^{\frac{d}{2}-1}}{\Gamma(\frac{d}{2})\sqrt{D}}.\]
\end{proof}

\medskip

We will now clarify the case $d=3$, and we can calculate \(\delta_p(n,f)\) for all \(p\le \infty\) explicitly. In this case $D=b_1b_2b_3$ and $f=b_1x^2+b_2y^2+b_3z^2$.

\begin{cor}\label{delta3}
When \(d=3\) and \(f=b_1x^2+b_2y^2+b_3z^2\) with \((b_1,b_2,b_3)=1\). Let $n$ be a square free positive integer satisfying \((n,2D)=1\). The local representation density \(\delta_p(n,f)\) has the following explicit form:
\begin{enumerate}
   \item For \(p=\infty\), \(\delta_\infty(n,f)=\frac{2\pi \sqrt{n}}{\sqrt{D}}\).
  \item For \(2<p<\infty\) and \(p\nmid n\), we can take \(\beta_p(f)=1\) and
	$$
		\delta_p(n,f)=\begin{cases} 1+\frac{1}{p}\left(\frac{-nD}{p}\right) &\textrm{if  $p\nmid D$,}\\
        		 1-\frac{1}{p}\left(\frac{-b_1b_2}{p}\right) &\textrm{if $p\nmid b_1b_2$ and $p|b_3$,}\\
			 1+\left(\frac{nb_1}{p}\right) &\textrm{if $p\nmid b_1$ and $p|b_2,p|b_3$.}
    		\end{cases}
	$$
  \item For \(2<p<\infty\) and \(p|n\), we can take \(\beta_p(f)=2\) and
\[\delta_p(n,f)=1-\frac{1}{p^2}.\]

  \item For \(p=2\), we can take \(\beta_p(f)=3\) and hence \(\delta_2(n,f)\) is completely determined by \(b_1,b_2,b_3,n\ (mod\ 8)\).
	
\end{enumerate}
\end{cor}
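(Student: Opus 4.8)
The plan is to read off all four cases as specializations of the lemmas just established, the only genuine work being the two degenerate subcases of (2) in which $p\mid D$, where those lemmas do not apply verbatim.

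First I would dispatch the cases that are pure substitutions. Case (1) is immediate from Lemma~\ref{delta-inf} at $d=3$: since $\Gamma(3/2)=\tfrac{\sqrt\pi}{2}$, the formula $\delta_\infty(n,f)=\pi^{3/2}n^{1/2}/(\Gamma(3/2)\sqrt D)$ collapses to $2\pi\sqrt n/\sqrt D$. Case (3) is Lemma~\ref{delta2} evaluated at $d=3$, giving $\delta_p(n,f)=1-p^{-(d-1)}=1-p^{-2}$ for the odd primes $p\mid n$, where $\beta_p(n,f)=2$ by Lemma~\ref{beta-p}.

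Next I would treat case (2), where $2<p<\infty$ and $p\nmid n$. The first subcase $p\nmid D$ falls under the odd-$d$ branch of Lemma~\ref{delta}: at $d=3$ it reads $\delta_p(n,f)=1+p^{(1-d)/2}\left(\tfrac{(-1)^{(d-1)/2}nD}{p}\right)=1+\tfrac1p\left(\tfrac{-nD}{p}\right)$. For the two degenerate subcases one cannot invoke Lemma~\ref{delta} directly, since there $p\mid D$; instead I would reduce modulo $p$ and free the vanishing variables. If $p\mid b_3$ but $p\nmid b_1b_2$, then $b_3z^2\equiv0\mod p$, so the count of solutions of $f\equiv n\mod p$ in $(\BZ/p\BZ)^3$ is exactly $p$ times that of $b_1x^2+b_2y^2\equiv n\mod p$; dividing by $p^{2}$ as in \eqref{delta-f-1} shows $\delta_p(n,f)$ equals the density at $n$ of the binary form $b_1x^2+b_2y^2$, whose discriminant datum is $b_1b_2$. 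The even-$d$ branch of Lemma~\ref{delta} then applies (its hypotheses hold since $n$ is prime to $2b_1b_2$ and $p\nmid 2nb_1b_2$), yielding $1-\tfrac1p\left(\tfrac{-b_1b_2}{p}\right)$. If $p\mid b_2$ and $p\mid b_3$ but $p\nmid b_1$, the same freeing of $y,z$ reduces to the unary form $b_1x^2$, whose density is $1+\left(\tfrac{nb_1}{p}\right)$, exactly the $d=1$ base case in the proof of Lemma~\ref{delta}.

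The one point requiring genuine care, and the step I expect to be the main obstacle, is that $\beta_p(n,f)=1$ still holds at the degenerate primes, since Lemma~\ref{beta-p} was proved only for $p\nmid 2nD$. The argument is short but must be made explicit: because $p\nmid n$, any solution of $f\equiv n\mod p$ must carry a unit among the variables whose coefficient is prime to $p$ (otherwise the surviving terms would force $f\equiv0\mod p$), so the gradient of $f$ is nonzero modulo $p$, and Hensel's Lemma~\ref{H} with $k=1,\ell=0$ gives the stabilization $\beta_p(n,f)=1$. Finally, case (4) is formal: by Lemma~\ref{beta-p} we may take $\beta_2(n,f)=3$, so \eqref{delta-f-1} gives $\delta_2(n,f)=2^{-6}\,|\{x\in(\BZ/8\BZ)^3:f(x)\equiv n\mod 8\}|$, and this cardinality, hence $\delta_2(n,f)$, manifestly depends only on the residues of $b_1,b_2,b_3,n$ modulo $8$.
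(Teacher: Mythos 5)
Your proposal is correct and follows essentially the same route as the paper, whose entire proof is the one-line observation that (1)--(3) follow from Lemmas \ref{delta-inf}, \ref{delta}, \ref{delta2} and \ref{beta-p}, and (4) from Lemma \ref{beta-p}. In fact your write-up is more careful than the paper's: the subcases of (2) with $p \mid D$ are not literally covered by Lemma \ref{delta} or Lemma \ref{beta-p} (both assume $p \nmid 2nD$), and your reduction to the binary and unary forms, together with the Hensel argument showing $\beta_p(n,f)=1$ still holds at such primes, supplies exactly the missing justification --- indeed the even-$d$ branch of Lemma \ref{delta}, which the paper remarks is used nowhere else, exists precisely for this reduction.
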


\begin{proof}
Assertions (1), (2) and (3) directly follow from Lemmas \ref{delta-inf}, \ref{delta}, \ref{delta2} and \ref{beta-p}. For (4), by Lemma \ref{beta-p}, we can take \(\beta_p(f)=3\), and hence the value of \(\delta_2(n,f)\) depends only on \(b_1,b_2,b_3,n\ (mod\ 8)\).
\end{proof}

\bigskip

\section{Siegel's mass formula and applications}\label{siegel-chp}

In this section, we use Siegel's mass formula to prove Theorem \ref{main-thm-1} and Proposition \ref{main-thm-2}.

\subsection{Siegel's Mass formula}
We shall fix the quadratic form $f=b_1x^2_1+\cdots+b_d x^2_d$.
Define \(\varepsilon_2=\frac{1}{2}\) and \(\varepsilon_d=1\) if \(d>2\). We have the following mass formula of Siegel:
$$
G_f(n)=\varepsilon_d\prod\limits_{p\le\infty} \delta_p(n,f)
$$

\subsection{Application to the case $d=3$.}

In view of the above Siegel's formula, the product equals to certain Fourier coefficients of a normalized Eisenstein series, which is related to $L$-values. When \(d=3\) we can express the product as class number \(h(-b)\) of an imaginary quadratic field \(\mathbb{Q}(\sqrt{-b})\) for some \(b>0\).

\begin{prop}\label{C}
Let \(f(x,y,z)=b_1x^2+b_2y^2+b_3z^2\) with \(\gcd(b_1,b_2,b_3)=1\) and $D=b_1b_2b_3$. Then for any positive square free integer \(n\ge 1\) coprime to \(2D\), we can find a non-negative rational number \(\lambda_f(n)\) such that
\begin{equation}\label{CF}
G_f(n)=\lambda_f(n)h(-Dn),
\end{equation}
where the number $\lambda_f(n)$ is given as
\begin{equation}\label{gamma}
\lambda_f(n)=\frac{24}{w_K}\sqrt{\frac{n}{D|D_K|}} \prod_{p|2D} \delta_p(n,f) \left(1-\frac{\chi(p)}{p}\right)\left(1-\frac{1}{p^2}\right)^{-1}.
\end{equation}
Here $K=\BQ(\sqrt{-Dn})$, $D_K$ is the discriminant of $K$, $w_K$ is the number of roots of unity in $K$, and $\chi$ is the Dirichlet character associated to $K/\BQ$.
\end{prop}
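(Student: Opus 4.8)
The plan is to feed the explicit local densities of Corollary~\ref{delta3} into Siegel's mass formula and reorganize the resulting infinite product into an Euler product for the Dirichlet $L$-function attached to $K=\BQ(\sqrt{-Dn})$, after which the Dirichlet class number formula converts that $L$-value into $h(-Dn)$. Since $d=3>2$ we have $\varepsilon_3=1$, so Theorem~\ref{S} gives $G_f(n)=\prod_{p\le\infty}\delta_p(n,f)$. I would first peel off the archimedean factor $\delta_\infty(n,f)=\tfrac{2\pi\sqrt n}{\sqrt D}$ and then partition the finite primes into three disjoint classes, which is legitimate because $n$ is coprime to $2D$: the good primes $p\nmid 2Dn$, the primes $p\mid n$, and the bad primes $p\mid 2D$. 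The densities of the first two classes are known in closed form (Lemmas~\ref{delta} and~\ref{delta2}), while the factors over $p\mid 2D$ are exactly the ones retained in the statement.

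The heart of the argument is the good-prime product. For $p\nmid 2Dn$, Lemma~\ref{delta} gives $\delta_p(n,f)=1+\tfrac1p\left(\tfrac{-nD}{p}\right)$. Because $K=\BQ(\sqrt{-Dn})$ has discriminant $D_K$ differing from $-Dn$ by a perfect square, for every such $p$ the Legendre symbol equals the value $\chi(p)$ of the Kronecker character of $K/\BQ$, and $\chi(p)=\pm1$. Hence I can rewrite $\delta_p(n,f)=\dfrac{1-p^{-2}}{1-\chi(p)/p}$, and collecting these over all $p\nmid 2Dn$ produces partial Euler products of $\zeta(2)^{-1}$ and of $L(1,\chi)^{-1}$ (working for $\mathrm{Re}(s)>1$ and passing to the limit $s\to1^+$ to justify the rearrangement). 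A clean cancellation then occurs: for $p\mid n$ the prime ramifies in $K$, since $v_p(-Dn)=1$, so $\chi(p)=0$ and $\delta_p(n,f)=1-p^{-2}$; this matches precisely the factor one must restore to complete the $\zeta(2)$ product, and likewise $\chi(p)=0$ removes these primes from the $L(1,\chi)$ product. After this bookkeeping the entire contribution of the good and $n$-dividing primes collapses to
\[
\frac{L(1,\chi)}{\zeta(2)}\cdot\prod_{p\mid 2D}\frac{1-\chi(p)/p}{1-p^{-2}}.
\]

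Finally I would invoke the Dirichlet class number formula $h(-Dn)=\tfrac{w_K\sqrt{|D_K|}}{2\pi}L(1,\chi)$ to replace $L(1,\chi)$ by $h(-Dn)$, substitute $\zeta(2)=\pi^2/6$, and reassemble with $\delta_\infty$ and the retained factors $\delta_p(n,f)$ for $p\mid 2D$. The powers of $\pi$ combine as $2\pi\cdot\pi^{-2}\cdot 2\pi=4$, and together with the $6$ from $\zeta(2)^{-1}$ this yields the constant $24$, giving precisely the claimed $\lambda_f(n)$ of~\eqref{gamma}. Rationality and non-negativity of $\lambda_f(n)$ then come for free: $G_f(n)=\sum_{g\in\fG_f}\xi_g r_g(n)$ is a non-negative rational number and $h(-Dn)$ is a positive integer, so $\lambda_f(n)=G_f(n)/h(-Dn)\in\BQ_{\ge0}$. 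The main obstacle I anticipate is the careful identification of the Legendre symbol with $\chi$ together with the ramification claim $\chi(p)=0$ for $p\mid n$ — these are what make the $\zeta(2)$ and $L(1,\chi)$ Euler products close up exactly — along with the attendant justification of rearranging the only conditionally convergent $L$-value as an Euler product.
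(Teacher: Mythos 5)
Your proposal is correct and follows essentially the same route as the paper: Siegel's mass formula, the local densities of Corollary \ref{delta3} recognized as Euler factors of $\zeta(2)^{-1}$ and $L(1,\chi)$, cancellation of the $p\mid n$ factors via $\chi(p)=0$ and $\delta_p(n,f)=1-p^{-2}$, and finally the class number formula $L(1,\chi)=\frac{2\pi h(-Dn)}{w_K\sqrt{|D_K|}}$. The only differences are cosmetic: you partition the primes into three classes at the outset and explicitly flag the Legendre-symbol-versus-$\chi$ identification and the Euler-product rearrangement at $s=1$, points the paper's proof passes over silently.
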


Note that the square root part in $\lambda_f(n)$ is independent of $n$. Let $D^\circ$ denote the square free part of $D$. Thus by $D/D^\circ$ is a square in $\BQ$, we know $\lambda_f(n)$ is indeed a rational number.

\begin{proof}
The key idea is that the local representation densities are essential local Euler factors of $L$-functions.
From Corollary \ref{delta3}, for all \(p\nmid 2Dn\), we have
\[\delta_p(n,f)=1+\frac{\chi(p)}{p}=\left(1-\frac{1}{p^2}\right)\left(1-\frac{\chi(p)}{p}\right)^{-1}.\]
From this relation and Siegel's formula in Theorem \ref{S}, we have
\[\begin{aligned}
G_f(n)=&\prod\limits_{p\le\infty} \delta_p(n,f)\\
=&\delta_\infty(n,f)\frac{L(1,\chi)}{\zeta(2)}\times \prod_{p|2Dn} \left( \delta_p(n,f) (1-\frac{\chi(p)}{p})(1-\frac{1}{p^2})^{-1}\right)\\
=&\frac{2\pi\sqrt{n}}{\sqrt{D}} \frac{6}{\pi^2}\frac{2\pi h(-nD)}{w_K\sqrt{|D_K|}} \prod_{p|2Dn} \delta_p(n,f) (1-\frac{\chi(p)}{p})(1-\frac{1}{p^2})^{-1},
\end{aligned}\]
where we have used the following standard facts on a zeta value and the class number formula
\[\zeta(2)=\frac{\pi^2}{6}\quad\textrm{and}\quad L(1,\chi)=\frac{2\pi h(-nD)}{w_K\sqrt{|D_K|}}.\]
Recall (3) in Corollary \ref{delta3}, we know for any $p\mid n$, $\delta_p(n,f)=1-\frac{1}{p^2}$. Therefore, we get
\[G_f(n)=\lambda_f(n)h(-nD)\]
and the number $\lambda_f(n)$ is as given in \eqref{gamma}.
\end{proof}

For an integer $M$, We recall the  {\it $M$-square-linked} relation defined in the introduction.
Two square free positive integers $n_1,n_2$ are {\it $M$-square-linked} if $n_1/n_2$ is a square in $\BQ^\times_\ell$ for each prime $\ell$ dividing $M$.

\begin{cor}\label{B}
There is a finite set $\fS(3)$ of positive square free integers such that
\[\lambda_f(n) \in \{\lambda_f(s)\}_{s\in \fS(3)}\]
for all positive square free integer \(n\) coprime to \(2D\) .
\end{cor}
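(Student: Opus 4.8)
The plan is to show that the quantity $\lambda_f(n)$, as given explicitly in \eqref{gamma}, depends on $n$ only through a finite amount of data, so that as $n$ ranges over all positive square free integers coprime to $2D$, the value $\lambda_f(n)$ takes only finitely many values. The starting point is the explicit formula
\[\lambda_f(n)=\frac{24}{w_K}\sqrt{\frac{n}{D|D_K|}} \prod_{p|2D} \delta_p(n,f)\left(1-\frac{\chi(p)}{p}\right)\left(1-\frac{1}{p^2}\right)^{-1},\]
where $K=\BQ(\sqrt{-Dn})$ and $\chi$ is the quadratic character attached to $K/\BQ$. First I would observe, as already remarked after the statement of Proposition \ref{C}, that the apparently $n$-dependent factor $\sqrt{n/(D|D_K|)}$ is in fact constant: writing $n^*=-Dn$ for the relevant discriminant datum, the conductor--discriminant relation gives $|D_K|=n/(\text{square})\cdot(\text{factor depending only on }D^\circ)$, so that $\sqrt{n/|D_K|}$ collapses to a quantity determined entirely by $D^\circ$ and by the local behaviour of $n$ at the primes dividing $2D$. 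The root number $w_K$ is $2$ except for the two fields $\BQ(i)$ and $\BQ(\sqrt{-3})$, which occur only for finitely many $n$ (namely $Dn\in\{1,3\}$ up to squares), so $w_K$ contributes only finitely many possibilities as well.

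The heart of the matter is the product over $p\mid 2D$. The key structural input is Corollary \ref{delta3}(4): for $p=2$ one may take $\beta_2(n,f)=3$, so $\delta_2(n,f)$ is completely determined by $b_1,b_2,b_3,n \mod 8$, hence depends only on the residue class of $n$ modulo $8$. For each odd prime $p\mid D$, Corollary \ref{delta3}(2) shows that $\delta_p(n,f)$ is governed by Legendre symbols of the shape $\left(\frac{-nD}{p}\right)$, $\left(\frac{nb_1}{p}\right)$, and the like, each of which depends only on the class of $n$ in $\BQ_p^\times/(\BQ_p^\times)^2$, equivalently on $n \mod p$ together with (for $p=2$) $n \mod 8$. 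The Euler factors $\left(1-\frac{\chi(p)}{p}\right)$ likewise depend on $n$ only through $\chi(p)$, which is itself a local square-class datum at $p$. Therefore every factor in the product is a function of the tuple $\big(n \bmod 8,\ (n \bmod p)_{p\mid D}\big)$, i.e.\ of the image of $n$ in the finite group $\prod_{\ell\mid 2D}\BQ_\ell^\times/(\BQ_\ell^\times)^2$. This is precisely the statement that $\lambda_f(n)$ depends on $n$ only through its $2D$-square-linked class.

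To finish, I would invoke the fact, recorded in the introduction, that the $2D$-square-linked relation has only finitely many equivalence classes. Choosing one positive square free representative $s$ from each class yields a finite set $\fS(3)$; by the two paragraphs above, any square free $n$ coprime to $2D$ satisfies $\lambda_f(n)=\lambda_f(s)$ for the representative $s$ lying in the same class, giving $\lambda_f(n)\in\{\lambda_f(s)\}_{s\in\fS(3)}$ as claimed. I expect the main obstacle to be bookkeeping rather than conceptual: one must verify carefully that the $\sqrt{n/|D_K|}$ factor really is a square-class invariant at the bad primes (watching the distinction between $D$ and its square free part $D^\circ$, and the ramification of $2$), and that the finitely many exceptional values of $w_K$ can be absorbed by possibly enlarging $\fS(3)$. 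Both of these are routine once one tracks the dependence through the local square classes, but they are the steps where an off-by-a-power-of-$2$ error would most easily creep in.
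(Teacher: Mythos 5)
Your proposal is correct and takes essentially the same route as the paper's own proof: both work from formula \eqref{gamma}, use Corollary \ref{delta3} to see that $\delta_2(n,f)$, the odd local densities, $\chi(p)$ and $|D_K|$ depend on $n$ only through its residue mod $8$ and mod the odd primes dividing $D$ (equivalently, through its $2D$-square-linked class), and dispose of the exceptional values of $w_K$ (i.e.\ $D^\circ n\in\{1,3\}$) by adjoining finitely many integers to $\fS(3)$. The only difference is presentational: the paper first records the cruder bound coming from $\BZ/8\BZ\times\BZ/4D\BZ\times\{1,3\}$, yielding the explicit cardinality bound $2+8D$, before passing to square-linked representatives as you do directly.
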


\begin{proof}
Put $1,3$ in $\fS(3)$, we may assume \(n>3\).
Recall the formula in (\ref{gamma}), we know
\begin{enumerate}
\item \(|D_K|\) depends on \(n\ (mod\ 4)\).

\item \(\delta_2(n,f)\) depends on \(n\ (mod\ 8)\) by Corollary \ref{delta3}.

\item For any odd prime \(p|D\), $\delta_p(n,f)$ and $\chi(p)$
depend on \(n\ (mod\ 4D)\).
\end{enumerate}
Thus the cardinality of $\fS(3)$ can be bounded by the number of the finite set
$$\BZ/8\BZ\times \BZ/4D\BZ\times \{1,3\}.$$
Notice the dependence of $\lambda_f(n)$ on $n$ in (1), (2) and (3) are mainly related to quadratic residues. Thus
if an integer $n$ is a square in all $\BQ_p$ for $p\mid 2D$, from formula \eqref{gamma} and Corollary \ref{delta3}, we know $\lambda_f(n)$ is independent of $n$.
Therefore we can let $\fS(3)$ be a finite set with cardinality no larger than $2+8D$ containing a set of representatives of {\it $2D$-square-linked} equivalence classes.
\end{proof}

\subsection{Application to the case $d\geq5$}

We now assume \(d\ge 5\) to be an odd integer. Assume that $f=b_1x^2_1+\cdots+b_dx^2_d$ satisfies $\gcd(b_1,\cdots,b_d)=1$.
Recall that $D=b_1b_2\cdots b_d$ and $D^\circ$ the square free part of $D$. Put $\fd=\frac{d-1}{2}$ and let \(K=\mathbb{Q}(\sqrt{(-1)^{\fd}Dn})\). Define \(\chi\) to be the Dirichlet character associated to $K/\BQ$. Let $L(s,\chi)$ be the Dirichlet $L$-series associated to $\chi$, since $\chi$ is non-principal, the series $L(s,\chi)$ can be extended to a holomorphic function on the whole complex plane. We denote by $\varepsilon_K\in\{1,2\}$
satisfying the discriminant \(D_K=\varepsilon_K^2(-1)^{\fd}nD^\circ\).

\begin{prop}\label{K}
For any positive square free integer \(n\) which is coprime to \(2D\), there exists a rational number \(\rho_f(n)\) such that
\begin{equation}\label{KF}
G_f(n)=\rho_f(n)L\left(1-\fd,\chi\right).
\end{equation}
Moreover, the rational number $\rho_f(n)$ is given as
\begin{equation}\label{gamma2}
\rho_f(n)=\frac{2^{\fd-1}(-1)^{[\frac{\fd}{2}]}\sqrt{\frac{D^\circ}{D}}}{(D^\circ)^\fd\varepsilon_K^{2\fd-1}\zeta(1-2\fd)}\cdot\prod_{p|2D} \delta_p(n,f) \left(1-\frac{\chi(p)}{p^\fd}\right)\left(1-\frac{1}{p^{2\fd}}\right)^{-1}.
\end{equation}
Here we denote by $[\ft]$ the maximal integer no greater than $\ft$.
\end{prop}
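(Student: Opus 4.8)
The plan is to feed the local densities computed in Section \ref{PL} into Siegel's formula (Theorem \ref{S}) and recognise the resulting Euler product as a ratio of Dirichlet $L$-values, then pass from the value at $\fd$ to the value at $1-\fd$ by the functional equation. Since $d\ge5$ we have $\varepsilon_d=1$, so $G_f(n)=\delta_\infty(n,f)\prod_{p<\infty}\delta_p(n,f)$. I would split the finite product over the three regimes governed by Lemmas \ref{delta}, \ref{delta2} and \ref{beta-p}: the good primes $p\nmid2nD$, the odd primes $p\mid n$, and the primes $p\mid2D$. For a good prime, Lemma \ref{delta} gives $\delta_p(n,f)=1+\chi(p)p^{-\fd}$, where $\chi(p)=\left(\frac{(-1)^{\fd}nD}{p}\right)$ is the quadratic character of $K=\BQ(\sqrt{(-1)^{\fd}Dn})$. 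The key algebraic identity, valid because $\chi(p)^2=1$ at unramified primes, is
\[
1+\chi(p)p^{-\fd}=\frac{1-p^{-2\fd}}{1-\chi(p)p^{-\fd}},
\]
which turns the good-prime product into $\prod_{p\nmid2nD}\delta_p(n,f)=\dfrac{L(\fd,\chi)}{\zeta(2\fd)}\prod_{p\mid2nD}\dfrac{1-\chi(p)p^{-\fd}}{1-p^{-2\fd}}$.

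Next I would absorb the correction factors at the ramified primes. At an odd prime $p\mid n$ the character is ramified, so $\chi(p)=0$, while Lemma \ref{delta2} gives $\delta_p(n,f)=1-p^{-2\fd}$; hence the correction factor $\frac{1-\chi(p)p^{-\fd}}{1-p^{-2\fd}}=\frac{1}{1-p^{-2\fd}}$ exactly cancels $\delta_p(n,f)$, contributing $1$. This is precisely why the surviving product in \eqref{gamma2} runs only over $p\mid2D$. Collecting terms yields
\[
G_f(n)=\delta_\infty(n,f)\,\frac{L(\fd,\chi)}{\zeta(2\fd)}\prod_{p\mid2D}\delta_p(n,f)\left(1-\frac{\chi(p)}{p^{\fd}}\right)\left(1-\frac{1}{p^{2\fd}}\right)^{-1},
\]
so it only remains to identify the archimedean factor $\delta_\infty(n,f)\,L(\fd,\chi)/\zeta(2\fd)$ with the rational constant of \eqref{gamma2} times $L(1-\fd,\chi)$.

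For this last and crucial step I would insert $\delta_\infty(n,f)=\pi^{d/2}n^{d/2-1}/(\Gamma(d/2)\sqrt D)$ from Lemma \ref{delta-inf} and apply two functional equations. For $\zeta$ I use the symmetric form $\pi^{-s/2}\Gamma(s/2)\zeta(s)=\pi^{-(1-s)/2}\Gamma((1-s)/2)\zeta(1-s)$ at $s=2\fd$, which, unlike the asymmetric form, avoids the spurious $0\cdot\infty$ coming from $\sin(\pi\fd)=0$. For $L(\cdot,\chi)$ I use the functional equation of the quadratic character $\chi$, whose parity matches that of $\fd$: $\chi$ is even when $\fd$ is even (so $(-1)^{\fd}Dn>0$ and $K$ is real) and odd when $\fd$ is odd, with root number $+1$ in both cases. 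Writing $|D_K|=\varepsilon_K^2\,nD^\circ$ produces the factors $\varepsilon_K^{2\fd-1}$, $(D^\circ)^{\fd}$ and $\sqrt{D^\circ/D}$, while the powers of $n$ cancel; together with the rationality of $L(1-\fd,\chi)$ and $\zeta(1-2\fd)$ and the fact that $D/D^\circ$ is a perfect square, this confirms $\rho_f(n)\in\BQ$.

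The main obstacle is the bookkeeping of the archimedean factors: one must track every power of $\pi$ and every $\Gamma$-value through both functional equations and through $\delta_\infty(n,f)$. An apparent leftover power of $\pi$ cancels only once one notices that the half-integer value $\Gamma(d/2)=\Gamma(\fd+\tfrac12)$ hides a $\sqrt\pi$, and that the negative half-integer $\Gamma$-values appearing in the two functional equations combine into a rational multiple of a power of $\pi$. Extracting the exact constant $2^{\fd-1}(-1)^{[\fd/2]}$ of \eqref{gamma2} — in particular the sign, which comes from evaluating $\Gamma$ at negative half-integers via the reflection formula (the ratio of these $\Gamma$-values produces precisely $(-1)^{[\fd/2]}$) — is the delicate point; the remainder is a routine, if lengthy, comparison of the even-$\fd$ and odd-$\fd$ cases.
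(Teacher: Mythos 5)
Your proposal is correct and follows essentially the same route as the paper's own proof: Siegel's formula (Theorem \ref{S}) combined with Lemmas \ref{delta} and \ref{delta2} to recognise the Euler product, with exactly the cancellation you describe at the odd primes $p\mid n$ (where $\chi(p)=0$ and $\delta_p(n,f)=1-p^{-2\fd}$), followed by the functional equations of $\zeta$ and $L(\cdot,\chi)$ together with the duplication formula hidden in $\Gamma\bigl(\fd+\tfrac12\bigr)$ to convert $\delta_\infty(n,f)L(\fd,\chi)/\zeta(2\fd)$ into the constant of \eqref{gamma2} times $L(1-\fd,\chi)/\zeta(1-2\fd)$ — the paper records this conversion as its intermediate identity \eqref{K-f-2} without displaying the archimedean bookkeeping, which is precisely the step you flag as delicate. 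Incidentally, your $\sqrt{D^\circ/D}$ agrees with \eqref{gamma2}, whereas the paper's displayed \eqref{K-f-2} writes $\sqrt{D/D^\circ}$; that is a typo in the paper, not a defect of your argument.
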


We remark since $D^\circ$ is the square free part of $D$, thus $\sqrt{D^\circ/D}$ is a rational number.

\begin{proof}
The main idea is similar as Proposition \ref{C}, i.e., the local representation densities provide the local Euler factor for $L$-functions. In contrast to the case $d=3$, we make essentially use of the functional equations of both the Riemann zeta function and Dirichlet $L$-function.

From Lemmas \ref{delta} and \ref{delta2}, we know that
\[\delta_p(n,f)=
    		\begin{cases}
        		 (1-p^{-2\fd})(1-p^{-\fd}\chi(p))^{-1} & \textrm{if $p\nmid2nD$,} \\
        		(1-p^{-2\fd}) & \textrm{if $p|n,p\nmid 2D$,}
    		\end{cases}\]
and \(\delta_\infty(n,f)=\frac{\pi^{\frac{d}{2}} n^{\frac{d}{2}-1}}{\Gamma(\frac{d}{2})\sqrt{D}}\).

By Siegel's formula in Theorem \ref{S}, we have
\begin{equation}\label{K-f-1}
  G_f(n)
=\delta_\infty(n,f)\frac{L(\fd,\chi)}{\zeta(2\fd)}\cdot \prod_{p|2Dn} \delta_p(n,f) \left(1-\frac{\chi(p)}{p^\fd}\right)\left(1-\frac{1}{p^{2\fd}}\right)^{-1}.
\end{equation}
Recalling that
\[\delta_\infty(n,f)=\frac{\pi^{\fd+\frac{1}{2}}n^{\fd-\frac{1}{2}}}{\Gamma(\fd+\frac{1}{2})\cdot\sqrt{D}},\]
and applying the functional equations of both zeta and $L$-functions, we get
\begin{equation}\label{K-f-2}
 \delta_\infty(n,f)\frac{L(\fd,\chi)}{\zeta(2\fd)}=\frac{2^{\fd-1}(-1)^{[\frac{\fd}{2}]}
 \cdot\sqrt{\frac{D}{D^\circ}}}{(D^\circ)^\fd\varepsilon^{2\fd-1}_K}\cdot\frac{L(1-\fd,\chi)}{\zeta(1-2\fd)}.
\end{equation}
By Lemma \ref{delta2}, for any prime $p\mid n$ but $p\nmid 2D$, we have $\delta_p(n,f)=1-\frac{1}{p^{2\fd}}$. Note also that $\chi(p)=0$ for $p\mid n$. Then put \eqref{K-f-2} into the equality \eqref{K-f-1}, we see that both \eqref{KF} and \eqref{gamma2} follow.
\end{proof}

By the same method as in Corollary \ref{B} and Lemma \ref{delta}, we have

\begin{cor}\label{B2}
There is a finite set $\fS(d)$ of positive square free integers such that
\[\rho_f(n) \in \{\rho_f(s)\}_{s\in \fS(d)}\]
for all positive square free integer \(n\) coprime to \(2D\) .
\end{cor}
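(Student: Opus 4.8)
The plan is to follow the argument of Corollary~\ref{B} essentially verbatim, with the exponents $1$ and $2$ replaced by $\fd$ and $2\fd$, and to show that the only genuinely $n$-dependent ingredients in the explicit formula \eqref{gamma2} are local quantities at the finitely many primes dividing $2D$, each of which takes only finitely many values. Recalling \eqref{gamma2}, the scalar prefactor $\frac{2^{\fd-1}(-1)^{[\frac{\fd}{2}]}\sqrt{D^\circ/D}}{(D^\circ)^\fd\varepsilon_K^{2\fd-1}\zeta(1-2\fd)}$ depends on $n$ solely through $\varepsilon_K\in\{1,2\}$, while the remaining factor is the finite product $\prod_{p\mid 2D}\delta_p(n,f)(1-\chi(p)p^{-\fd})(1-p^{-2\fd})^{-1}$ over the fixed set of primes dividing $2D$. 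The rest of the formula ($\fd$, $D$, $D^\circ$, $\zeta(1-2\fd)$) is independent of $n$.

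First I would treat the scalar $\varepsilon_K$. Since $n$ is square free and coprime to $2D$, the discriminant $D_K$ of $K=\BQ(\sqrt{(-1)^{\fd}Dn})$ equals $(-1)^{\fd}nD^\circ$, or four times this, according to whether $(-1)^{\fd}nD^\circ\equiv 1\bmod 4$; hence $\varepsilon_K$ is determined by $n\bmod 4$, that is, by the square class of $n$ in $\BQ_2^\times$. Next, for each prime $p\mid 2D$ I would argue that both $\chi(p)$ and $\delta_p(n,f)$ depend on $n$ only through the square class of $n$ in $\BQ_p^\times$. For $\chi(p)$ this is the standard behaviour of the Kronecker symbol $\left(\frac{D_K}{\cdot}\right)$: it is $0$ when $p$ ramifies in $K$ and otherwise a Legendre-type symbol evaluated at the $p$-adic unit $n$. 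For the local density the cleanest route is a scaling argument: if $n'=u^2 n$ with $u\in\BZ_p^\times$, then $x\mapsto ux$ is a Haar-measure-preserving bijection of $\BZ_p^d$ carrying the fibre $f^{-1}(n)$ onto $f^{-1}(n')$, so $\delta_p(n,f)=\delta_p(n',f)$ follows directly from the definition \eqref{delta-f-1}; as $n$ is a $p$-adic unit, this shows $\delta_p(n,f)$ is constant on the cosets of $(\BZ_p^\times)^2$. Alternatively one may invoke the uniform bound on $\beta_p(n,f)$ produced by Hensel's Lemma~\ref{H}, exactly as in Lemma~\ref{delta}, to see that $\delta_p(n,f)$ depends only on $n$ modulo a fixed power of $p$.

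Combining these observations, $\rho_f(n)$ depends on $n$ only through the tuple of square classes of $n$ in $\BQ_p^\times$ for $p\mid 2D$, that is, only through the $2D$-square-linked equivalence class of $n$. As recorded in the introduction (and used already in Corollary~\ref{B}), there are only finitely many such classes among the positive square free integers coprime to $2D$; choosing one representative $s$ from each class yields the required finite set $\fS(d)$, and then $\rho_f(n)=\rho_f(s)$ whenever $s$ is the representative square-linked to $n$. I expect the only \emph{delicate} point to be the claim that the bad local densities $\delta_p(n,f)$ at $p\mid 2D$ (above all at $p=2$) genuinely descend to square classes; the scaling bijection above settles this cleanly and uniformly in $d$, so no separate computation of $\delta_2$ is needed for the finiteness conclusion.
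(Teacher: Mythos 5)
Your proof is correct, and its overall architecture coincides with the paper's: isolate the $n$-dependence of $\rho_f(n)$ in \eqref{gamma2} into $\varepsilon_K$, $\chi(p)$ and $\delta_p(n,f)$ for the finitely many primes $p\mid 2D$, show each of these depends only on the square class of $n$ in $\BQ_p^\times$, and conclude by finiteness of the {\it $2D$-square-linked} classes. Where you genuinely diverge is in the justification of the key step. The paper's proof of Corollary \ref{B2} is a one-line appeal to ``the same method as in Corollary \ref{B} and Lemma \ref{delta}'', and in Corollary \ref{B} the square-class dependence of the bad local densities is read off from the explicit formulas of Corollary \ref{delta3} --- formulas the paper establishes only for $d=3$; for general $d$ one would have to redo such computations at odd $p\mid D$ and at $p=2$, which the paper never does. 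Your unit-scaling bijection ($x\mapsto ux$ with $u\in\BZ_p^\times$ preserves Haar measure on $\BZ_p^d$ and carries $f^{-1}(U)$ onto $f^{-1}(u^2U)$, whence $\delta_p(u^2n,f)=\delta_p(n,f)$ directly from definition \eqref{delta-f-1}) replaces all of this by a short argument valid uniformly in $d$ and at every $p\mid 2D$, including $p=2$. What the paper's route buys is explicit values of the densities and hence of $\lambda_f$ and $\rho_f$ (needed for Example \ref{exm-1-2}, the numerics, and the explicit bound $|\fS(3)|\le 2+8D$); what yours buys is a self-contained, dimension-independent finiteness proof that fills in precisely the detail left implicit for $d\ge 5$. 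One caution: your fallback argument via Hensel (a uniform bound on $\beta_p(n,f)$ at $p\mid 2D$) is the one step that would require genuine work in general --- the paper proves such a bound at $p=2$ only when $d=3$ --- so you are right to treat the scaling argument, not the Hensel route, as the primary one.
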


\bigskip

In particular, when \(\fd\) is even, which is equivalent to $d\geq5$ and $d\equiv 1\mod 4$,
the field \(K\) is a real quadratic field. The number $\fd$ is even.
Let \(w_{\fd}(K)\) be the largest integer \(N\) such that all the elements in \(Gal(K(\zeta_N)/N)\) can be annihilated by \(\fd\), where \(\zeta_N\) is a primitive \(N^{th}\) root of unity. Let \(K_{d-3}(\mathcal{O}_K)\) be the \((d-3)^{th}\) \(K\)-group of $\CO_K$ defined by Quillen, then we have the following Quillen-Lichtenbaum conjecture \cite{Lichtenbaum}:
\[|K_{d-3}(\mathcal{O}_K)|=\begin{cases}w_{\fd}(K)\zeta_K(1-\fd),\quad&\textrm{if } d\equiv 5\mod 8\\ \frac{w_\fd(K)}{4}\zeta_K(1-\fd),\quad&\textrm{if } d\equiv 1\mod 8.\end{cases}\]
Here $\zeta_K(s)$ is the Dedekind zeta function associated to $K$. By Wiles's theorem on Iwasawa main conjecture \cite{Wiles91} and $K/\BQ$ is abelian, we know the Quillen--Lichtenbaum conjecture is true for $K$ by the fundamental work of Rost and Voevodsky.

\medskip

\begin{proof}[Proof of Theorem \ref{main-thm-1}]
Since $d\geq5$ and $d\equiv 1\mod 4$, $\fd$ is even. Keeping the same notations as above, by Proposition \ref{K}, we have
\[G_f(n)=\rho_f(n)L(1-\fd,\chi).\]
From the above remark on Quillen--Lichtenbaum conjecture and the identity
\[L(1-\fd,\chi)\zeta(1-\fd)=\zeta_K(1-\fd),\]
we know the result follows.
\end{proof}

\bigskip

\section{Waldspurger's formula and applications to elliptic curves}\label{walds}

In this section, we keep the assumption that $d$ is odd. We first give some basic knowledge on Shimura lift and Waldspurger formula (see also \cite{Qin}). We then study the case  $d=3$ and prove Theorem \ref{main-thm-3}.

\subsection{Modular Forms}
Let \(M_k(N,\chi)\) be the space of modular forms of weight \(k\in\frac{1}{2}\mathbb{Z}\), level \(N\in\mathbb{N}\) and character \(\chi\), \(S_k(N,\chi)\) be the subspace of cusp forms, \(E_k(N,\chi)\) be the orthogonal complement of \(S_k(N,\chi)\) with respect to the Petersson inner product.

First we assume \(k\) is not an integer. For any \(h=\sum\limits_{n=0}^\infty a_nq^n\in M_k(N,\chi)\), the Hecke operator \(T_{p^2}\) with \(p\nmid N\) is given by \(T_{p^2}h=\sum\limits_{n=0}^\infty b_nq^n\) where
\begin{equation}\label{H2}
b_n=a_{p^2n}+\chi(p)\left(\frac{(-1)^{k-\frac{1}{2}}n}{p}\right)p^{k-\frac{3}{2}}a_n+\chi(p^2)p^{2k-2}a_{\frac{n}{p^2}}.
\end{equation}
Here \(a_{\frac{n}{p^2}}=0\) if \(p^2\nmid n\).

Note that when \(h\) is an eigenform with eigenvalue \(\lambda_p\) and \(a_1=1\), then
\begin{equation}\label{E2}
\lambda_p=a_{p^2}+\chi(p)\left(\frac{(-1)^{k-\frac{1}{2}}}{p}\right)p^{k-\frac{3}{2}}
\end{equation}

\medskip

Next, we consider the weight \(k\in\mathbb{Z}\). The Hecke operator \(T_{p}\) with \(p\nmid N\) is given by $T_p h=\sum^\infty_{n=0}b_nq^n$ where
\begin{equation}\label{H1}
b_n=a_{pn}+\chi(p)p^{k-1}a_{\frac{n}{p}}.
\end{equation}
Here \(a_{\frac{n}{p}}=0\) if \(p\nmid n\).
When \(h\) is an eigenform with eigenvalue \(\mu_p\) and \(a_1=1\), then
\begin{equation}\label{E1}
\mu_p=a_{p}.
\end{equation}

\bigskip

Recall that the theta series related to \(f\) is defined to be
\[\theta_f=\sum\limits_{x\in \mathbb{Z}^d} q^{f(x)}=\sum\limits_{n=0}^\infty r_f(n)q^n.\]
And the theta series related to the genus \(\fG_f\) of \(f\) is
\[\theta_{\fG_f}=\sum\limits_{n=0}^\infty G_f(n)q^n.\]

Recall that $h=h_f$ is the class number of $f$ and $N_f$ the level of $f$.
Write \(\fG_f=\{f_1=f,...,f_h\}\), \(\vartheta_i=\theta_f-\theta_{f_i}\) for \(i=2,...h\), and \(V_f=\mathbb{C}\theta_{f_1}\oplus \cdots \oplus \mathbb{C}\theta_{f_h}\). Let \(\chi_f=\left(\frac{4D}{.}\right)\), then we have the following decomposition:

\begin{lem}\label{Hecke}
\(V_f\) can be decomposed into
\[V_f\cap E_{\frac{d}{2}}(N_f,\chi_f) = \mathbb{C} \theta_{\fG_f}\]
\[V_f\cap S_{\frac{d}{2}}(N_f,\chi_f)=\mathbb{C}\vartheta_2\oplus \cdots \oplus \mathbb{C}\vartheta_h\]
which are preserved by Hecke operators.
\end{lem}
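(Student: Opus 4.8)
The plan is to work inside the ambient space $M_{\frac{d}{2}}(N_f,\chi_f)=E_{\frac{d}{2}}(N_f,\chi_f)\oplus S_{\frac{d}{2}}(N_f,\chi_f)$ and to produce a basis of $V_f$ that is adapted to this orthogonal decomposition. First I would record that $V_f\subseteq M_{\frac{d}{2}}(N_f,\chi_f)$: by the classical theory of theta series of quadratic forms (cf. \cite{Shimura,Schulze}) each $\theta_{f_i}$ is a modular form of weight $\frac{d}{2}$, and because the level $N_f$ and the discriminant $D$---hence the character $\chi_f=\left(\frac{4D}{\cdot}\right)$---are genus invariants, all $h$ of these theta series lie in one and the same space. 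Reading the definition of $V_f$ as the direct sum of the $h$ lines $\mathbb{C}\theta_{f_i}$, we have $\dim_{\mathbb{C}}V_f=h$.

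Next I would pin down where the natural generators sit. Since $G_f(n)=\sum_{g\in\fG_f}\xi_g r_g(n)$, we get $\theta_{\fG_f}=\sum_i\xi_{f_i}\theta_{f_i}\in V_f$, and Siegel's mass formula (Theorem \ref{S}), i.e. the Siegel--Weil identity, identifies $\theta_{\fG_f}$ with an Eisenstein series, so $\theta_{\fG_f}\in E_{\frac{d}{2}}(N_f,\chi_f)$. On the other hand, Eichler's theorem \cite{Eichler} gives $\vartheta_i=\theta_f-\theta_{f_i}\in S_{\frac{d}{2}}(N_f,\chi_f)$ for $i=2,\dots,h$. Using $\sum_i\xi_{f_i}=1$ (immediate from \eqref{1-1-f-2}) one verifies
\[
\theta_{\fG_f}=\theta_{f_1}-\sum_{i=2}^{h}\xi_{f_i}\vartheta_i,\qquad \theta_{f_i}=\theta_{f_1}-\vartheta_i\ \ (i\geq 2),
\]
so $\{\theta_{\fG_f},\vartheta_2,\dots,\vartheta_h\}$ also spans $V_f$ and, being $h$ vectors, is a basis.

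I would then obtain both equalities by intersecting this basis with the splitting $M_{\frac{d}{2}}=E_{\frac{d}{2}}\oplus S_{\frac{d}{2}}$. Setting $W=\mathbb{C}\vartheta_2\oplus\cdots\oplus\mathbb{C}\vartheta_h\subseteq S_{\frac{d}{2}}$ and $\mathbb{C}\theta_{\fG_f}\subseteq E_{\frac{d}{2}}$, every $v\in V_f$ is uniquely $v=c\,\theta_{\fG_f}+w$ with $w\in W$; since $c\,\theta_{\fG_f}\in E_{\frac{d}{2}}$ and $w\in S_{\frac{d}{2}}$, uniqueness of the decomposition forces $c\,\theta_{\fG_f}$ to be the Eisenstein part and $w$ the cuspidal part of $v$. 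Hence $v\in E_{\frac{d}{2}}$ exactly when $w=0$ and $v\in S_{\frac{d}{2}}$ exactly when $c=0$, which yields $V_f\cap E_{\frac{d}{2}}(N_f,\chi_f)=\mathbb{C}\theta_{\fG_f}$ and $V_f\cap S_{\frac{d}{2}}(N_f,\chi_f)=W$, as claimed.

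For Hecke-stability I would note that $S_{\frac{d}{2}}$ is classically preserved by the operators $T_{p^2}$ with $p\nmid N_f$, and that, these operators being normal for the Petersson product, their orthogonal complement $E_{\frac{d}{2}}$ is preserved as well; it then suffices to show $V_f$ is Hecke-stable, after which $V_f\cap E_{\frac{d}{2}}$ and $V_f\cap S_{\frac{d}{2}}$ are automatically stable as intersections of stable subspaces. The hard part---and the main obstacle---is precisely this last point: one needs Eichler's commutation relation, expressing $T_{p^2}\theta_{f_i}=\sum_j B_{ij}(p)\,\theta_{f_j}$ as an integral combination of the genus theta series (the $B_{ij}(p)$ being entries of Brandt-type matrices counting $p$-neighbours inside $\fG_f$), so that the Hecke action does not leave $V_f$. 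Everything else reduces to the linear algebra above together with the two inputs already available, namely Siegel--Weil (placing $\theta_{\fG_f}$ in $E_{\frac{d}{2}}$) and Eichler's cusp-form theorem (placing the $\vartheta_i$ in $S_{\frac{d}{2}}$).
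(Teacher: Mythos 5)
Your proposal is correct and takes essentially the same route as the paper: the same three external inputs (the Eichler commutation relation, cited by the paper via Rallis, for Hecke-stability of $V_f$; the Siegel--Weil/Schulze-Pillot identification of $\theta_{\fG_f}$ as an Eisenstein series; and Eichler's theorem placing the $\vartheta_i$ in $S_{\frac{d}{2}}(N_f,\chi_f)$), followed by the concluding linear algebra that the paper leaves implicit but you spell out. No gaps.
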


\begin{proof}
This is a well-known result, see also \cite{Qin}.
From \cite[Theorem 1.1]{Rallis}, we know \(V_f\) is preserved by Hecke operators.
By a result of Schulze-Pillot \cite[Korollar 1]{Schulze}, \(T_{p^2}\theta_{\fG_f}=(p^{d-2}+1)\theta_{\fG_f}\) and \(\theta_{\fG_f}\in E_{\frac{d}{2}}(N_f,\chi_f)\). Now a theorem of Eichler (see \cite[Theorem on Page 245]{Walling}) shows that $\vartheta_j(2\leq j\leq h)$ are cusp forms. Therefore the result follows.
\end{proof}

\medskip

We will consider the case when \(d=3\) and \(h_f=2\). The Lemma \ref{Hecke} implies that \(\vartheta_2\) is an eigenform. So we can apply the Shimura lift to \(\vartheta_2\). For this purpose, we give the following theorem:

\begin{thm}[Shimura \cite{Shimura}]\label{SL}
Let \(k\ge 3\) be an odd integer, \(\lambda=\frac{k-1}{2}\), \(4|N\), \(\chi\) be a Dirichlet character modulo \(N\). Let \(\theta\in S_{\frac{k}{2}}(N,\chi)\) be an eigenform for $T_{p^2}$ for all primes $p\nmid N$ with eigenvalue \(\lambda_p\). Define \(\Theta=\sum\limits_{n=1}^\infty b_nq^n\) by
\begin{equation}\label{SLF}
\sum\limits_{n=1}^\infty \frac{b_n}{n^s}=\prod\limits_p \frac{1}{1-\lambda_p p^{-s}+\chi(p^2)p^{k-2-2s}}
\end{equation}
Then \(\Theta\in M_{k-1}(N',\chi^2)\) for some integer \(N'\) which is divisible by the conductor of \(\chi^2\). If \(k\ge 5\), \(\Theta\) is a cusp form. Moreover, by a remark made by Niwa \cite{Niwa}, we can choose \(N'=\frac{N}{2}\).
\end{thm}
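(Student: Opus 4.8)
The plan is to realise $\Theta$ as the modular form whose Hecke $L$-series is the Euler product in \eqref{SLF}, following Shimura's two-step strategy: first extract this Dirichlet series from the coefficients of $\theta = \sum_{n\ge1} a(n) q^n$ taken along square classes, and then verify that it has the analytic properties required by Weil's converse theorem. Fix a square-free $t$ with $a(t)\neq0$ and set $\psi_t = \left(\frac{(-1)^\lambda t}{\cdot}\right)$. The first step is purely formal. Specialising $n = t p^{2\nu}$ in the eigenform identity coming from \eqref{H2}, and using that the symbol $\left(\frac{(-1)^\lambda n}{p}\right)$ equals $\psi_t(p)$ when $\nu = 0$ and vanishes when $\nu \ge 1$, one obtains for each $p \nmid N$ a three-term recursion in $\nu$ whose generating function sums to the local factor
\[
\frac{1 - \chi(p)\psi_t(p)\,p^{\lambda-1-s}}{1 - \lambda_p p^{-s} + \chi(p^2)\,p^{2\lambda-1-2s}}.
\]
Taking the product over all primes and comparing with \eqref{SLF} yields the key identity
\[
\sum_{n=1}^\infty \frac{a(tn^2)}{n^s} \;=\; \frac{a(t)}{L(s-\lambda+1,\,\chi\psi_t)}\sum_{n=1}^\infty \frac{b_n}{n^s},
\]
so that the target Dirichlet series $\Phi(s) = \sum_n b_n n^{-s}$ is, up to an explicit Dirichlet $L$-factor, the square-class series $A_t(s) = \sum_n a(tn^2) n^{-s}$.

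The analytic heart is the second step: to continue $A_t(s)$ meromorphically and produce its functional equation by a Rankin--Selberg integral. Pairing $\theta$ with the weight $\tfrac12$ theta series $g_t(z) = \sum_{m} e^{2\pi i t m^2 z}$ gives a form of integral weight $\lambda = \tfrac{k-1}{2}$, and integrating $\theta(z)\,\overline{g_t(z)}\,y^{k/2}$ against a real-analytic Eisenstein series of weight $\lambda$ on $\Gamma_0(N)$ and then unfolding expresses the integral as a gamma factor times $A_t(s)$ (after the usual doubling of the variable). The meromorphic continuation and functional equation of the Eisenstein series in $s$ then transfer to $A_t(s)$, and via the identity above to $\Phi(s)$, giving a functional equation of the shape expected for a weight $k-1$ form; running the same computation with $g_t$ twisted by auxiliary Dirichlet characters supplies the functional equations of all the twists $\Phi_\psi(s) = \sum_n \psi(n) b_n n^{-s}$.

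With continuation, boundedness in vertical strips, and the twisted functional equations established, Weil's converse theorem yields $\Theta \in M_{k-1}(N',\chi^2)$ with $N'$ divisible by the conductor of $\chi^2$. For the cuspidality claim when $k \ge 5$, i.e. $\lambda \ge 2$, I would read off from the integral representation that $\Phi(s)$ is in fact entire: the only candidate pole is inherited from the Eisenstein series, and for $\lambda \ge 2$ the accompanying factor $L(s-\lambda+1,\chi\psi_t)$ is regular there, so no pole survives and $\Theta$ must be cuspidal; the borderline $\lambda = 1$ ($k=3$) is exactly where an Eisenstein component can persist, which is why cuspidality is asserted only for $k \ge 5$. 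Finally, to obtain Niwa's sharpening $N' = N/2$ I would bypass the converse theorem and instead realise the lift as an explicit theta integral $\Theta(z) = \int_{\Gamma_0(N)\backslash\mathcal H} \theta(\tau)\,\overline{\Omega(\tau,z)}\,v^{k/2}\,d\mu(\tau)$ against the Weil-representation theta kernel $\Omega(\tau,z)$; a direct computation of the transformation law of $\Omega$ in the variable $z$ then exhibits invariance under $\Gamma_0(N/2)$.

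The step I expect to be the main obstacle is the analytic one. Setting up the Rankin--Selberg integral in half-integral weight demands careful control of the metaplectic multiplier system carried by $g_t$ and of the precise gamma and conductor factors, and Weil's converse theorem requires verifying the functional equations for \emph{all} twists simultaneously, with uniform growth control. Pinning down the exact level --- and in particular Niwa's improvement from $N$ to $N/2$ --- is the most delicate bookkeeping, and it is there that the explicit theta-kernel description, rather than the converse theorem alone, carries the argument.
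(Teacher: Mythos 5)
The paper itself does not prove this theorem: it is invoked as a black box, with the modularity statement attributed to Shimura \cite{Shimura} and the level refinement $N'=N/2$ to Niwa \cite{Niwa}. So the only meaningful comparison is with those original arguments, and on the main modularity claim your outline is faithful to them: the square-class identity $\sum_{n\ge1} a(tn^2)n^{-s} = a(t)\,\Phi(s)/L(s-\lambda+1,\chi\psi_t)$ is exactly Shimura's starting point, the Rankin--Selberg integral of $\theta$ against the weight $\tfrac12$ theta series and a real-analytic Eisenstein series is how he continues the twisted series, and Weil's converse theorem is how he lands in $M_{k-1}(N',\chi^2)$; realising the lift by Niwa's theta kernel is likewise the correct route to $N'=N/2$. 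One small omission in your step one: the Euler product \eqref{SLF} runs over \emph{all} primes, while the recursion you extract from \eqref{H2} only covers $p\nmid N$, so the primes dividing $N$ need the analogous local computation with the $U_{p^2}$-type operators, as in Shimura's paper.

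The genuine gap is your cuspidality argument for $k\ge5$, which fails as stated, for two reasons. First, the pole logic is inverted: from your own identity, $\Phi(s)=a(t)^{-1}L(s-\lambda+1,\chi\psi_t)\,A_t(s)$, so a pole of $A_t(s)$ inherited from the Eisenstein series is removed only if the $L$-factor has a zero at that point; the $L$-factor being ``regular there'' leaves the pole intact, since a pole survives multiplication by a regular nonvanishing function. Second, even if one proves that $\Phi$ and all its twists are entire, this does not force $\Theta$ to be a cusp form on $\Gamma_0(N')$: the $L$-function controls only the constant terms at the cusps $0$ and $\infty$, and there exist non-cuspidal forms with entire $L$-function, e.g.\ an Eisenstein series attached to a pair of non-principal characters, whose $L$-function is a product of two Dirichlet $L$-functions without poles. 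The argument that actually yields the claim, and explains the threshold, is coefficient growth: the Hecke bound for half-integral weight cusp forms gives $a(n)=O(n^{k/4})$, hence from $b_n=\sum_{d\mid n}(\chi\psi_t)(d)\,d^{\lambda-1}a(tn^2/d^2)/a(t)$ one gets $b_n=O(n^{k/2})=O(n^{\lambda+\frac12})$, the divisor sum $\sum_{d\mid n}d^{\lambda-1-k/2}=\sum_{d\mid n}d^{-3/2}$ being bounded; since a nonzero Eisenstein component of a form of weight $k-1=2\lambda$ forces coefficients of order $n^{2\lambda-1}$, cuspidality follows precisely when $\lambda+\tfrac12<2\lambda-1$, i.e.\ $\lambda\ge2$, i.e.\ $k\ge5$ --- which also shows why $k=3$ must be excluded, rather than leaving that case to a heuristic about ``borderline'' Eisenstein components.
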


For a eigencusp new form $h$ with integral weight and a Dirichlet character $\eta$, we denote by $L(h,\eta,s)$ the $L$-functions associated to $h$ twisted by $\eta$. For a square free integer $n$, we denote \(\chi_n=(\frac{n}{\cdot})\).
Using Shimura lift, one can establish an relation between the coefficient of \(\theta\) and the values of the twisted $L$-functions of \(\Theta\).

\begin{thm}[Waldspurger \cite{Waldspurger}]\label{W}
Given \(\theta=\sum\limits_{n=1}^\infty a_nq^n\in S_{\frac{k}{2}}(N,\chi)\) satisfying \(T_{p^2}\theta=\lambda_p \theta\) for all \(p\nmid N\). If there is \(\Theta\in S_{k-1}(\frac{N}{2},\chi^2)\) such that \(T_p\Theta=\lambda_p \Theta\) for all \(p\nmid N\), then
\begin{equation}\label{WF}
a^2_{n_0}L(\Theta,\chi_{-1}^\lambda\chi^{-1}\chi_n,\lambda)n^{\frac{k}{2}-1}=a^2_{n}
L(\Theta,\chi_{-1}^\lambda\chi^{-1}\chi_{n_0},\lambda)n_0^{\frac{k}{2}-1}\chi\left(\frac{n_0}{n}\right)
\end{equation}
for any positive square free integers \(n,n_0\) with \(\frac{n_0}{n}\in (\mathbb{Q}_p^\times )^2\) for all \(p|N\).
\end{thm}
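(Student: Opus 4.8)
The statement is Waldspurger's theorem, and the route I would take is the theta correspondence between the metaplectic double cover $\widetilde{\mathrm{SL}}_2$ and $\mathrm{PGL}_2\cong\mathrm{SO}_3$ over the adele ring $\BA$ of $\BQ$. The plan is first to adelize: attach to the half-integral weight eigenform $\theta$ a genuine automorphic form $\varphi$ on $\widetilde{\mathrm{SL}}_2(\BA)$, and to the integral weight eigenform $\Theta$ the cuspidal automorphic representation $\pi$ of $\mathrm{PGL}_2(\BA)$ that it generates. The matching of Hecke eigenvalues in the hypotheses, $T_{p^2}\theta=\lambda_p\theta$ and $T_p\Theta=\lambda_p\Theta$ for all $p\nmid N$, is precisely the assertion that $\varphi$ lies in the theta lift of $\pi$ under the Shimura--Waldspurger correspondence; this is what the two eigenvalue conditions are there to guarantee, and it lets me treat $\theta$ and $\Theta$ as partners of a single automorphic object.

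Next I would express the Fourier coefficients as periods. The normalized $n$-th coefficient $a_n$ corresponds, up to an explicit archimedean normalization and a power of $n$, to a toric period $\ell_n(\varphi)$ attached to the square class of $n$. Waldspurger's local--global identity then expresses $|\ell_n(\varphi)|^2$ as the central twisted value $L(\Theta,\chi_{-1}^\lambda\chi^{-1}\chi_n,\lambda)$ (note that $\lambda=\tfrac{k-1}{2}$ is the center of the functional equation of $\Theta$) times a global constant $C$ built from the Petersson norms of $\theta$ and $\Theta$ and a product of purely local factors $\prod_v\alpha_v(n)$, each $\alpha_v(n)$ depending only on $\pi_v$, the local component of $\varphi$, and the local square class of $n$ at $v$. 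I would prove this by a Rankin--Selberg, or doubling, unfolding of the global period against the theta kernel, reducing it to local zeta integrals governed by the local functional equation.

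The desired identity I would then read off by forming the ratio of this formula at $n$ and at $n_0$. The global constant $C$ is independent of $n$ and cancels immediately, as does the archimedean factor once one checks that it agrees for $n$ and $n_0$. At the finite primes $p\nmid N$ the factor $\alpha_p$ is an unramified computation that reassembles into the elementary quantities $n^{\frac{k}{2}-1}$ and $\chi(n_0/n)$. The crucial use of the hypothesis $n/n_0\in(\BQ_p^\times)^2$ for every $p\mid N$ is at the finitely many ramified places: there $n$ and $n_0$ have the same local square class, so $\alpha_p(n)=\alpha_p(n_0)$ and the genuinely complicated bad-prime factors cancel exactly in the ratio. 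Collecting terms yields $a_{n_0}^2\,L(\Theta,\chi_{-1}^\lambda\chi^{-1}\chi_n,\lambda)\,n^{\frac{k}{2}-1}=a_n^2\,L(\Theta,\chi_{-1}^\lambda\chi^{-1}\chi_{n_0},\lambda)\,n_0^{\frac{k}{2}-1}\,\chi(n_0/n)$.

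The main obstacle will be the local theory underlying $|\ell_n(\varphi)|^2$: one needs Waldspurger's local functional equation together with the dichotomy and nonvanishing statements for the local theta correspondence, and pinning down the archimedean factor and the precise power of $n$ requires an explicit evaluation of the real Whittaker integral in weight $k/2$. A more classical alternative I would keep in reserve, closer in spirit to the rest of this paper, is the Shimura--Shintani theta-kernel method: integrate $\Theta(z)$ against a kernel $\Omega(\tau,z)$ of weight $k/2$ in $\tau$ and $k-1$ in $z$, identify the result with a multiple of $\theta$, and apply Rankin--Selberg unfolding to its $n$-th Fourier coefficient to produce $L(\Theta,\chi_{-1}^\lambda\chi^{-1}\chi_n,\lambda)$ directly; there the analogous difficulty is the explicit evaluation of the local kernel integrals at the ramified and infinite places.
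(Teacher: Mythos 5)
The paper does not prove this statement: Theorem \ref{W} is quoted as a black box from Waldspurger's paper \cite{Waldspurger} (it is essentially his Corollaire 2), so there is no internal proof to compare your attempt against; the only fair comparison is with the literature itself. Judged that way, your sketch follows the genuine route: Waldspurger's argument does pass through the representation theory of the metaplectic cover of $\mathrm{SL}_2$, its theta correspondence with $\mathrm{PGL}_2$, and a factorization of the square of the $n$-th coefficient functional into a central $L$-value times purely local constants. You have also correctly isolated the one structural point that explains why the theorem takes this ratio form: the intractable factors at the ramified places depend only on the local square class of $n$, so the hypothesis $n/n_0\in(\mathbb{Q}_p^\times)^2$ for all $p\mid N$ forces them to cancel, while the unramified and archimedean factors are explicitly computable and reassemble into $n^{\frac{k}{2}-1}$ and $\chi(n_0/n)$. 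That is exactly the mechanism behind the statement.

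Two of your steps, however, conceal most of the actual content, so the proposal is an accurate map of the proof rather than a proof. First, the inference from the eigenvalue matching $T_{p^2}\theta=\lambda_p\theta$ and $T_p\Theta=\lambda_p\Theta$ ($p\nmid N$) to ``$\varphi$ lies in the theta lift of $\pi$'' is not formal: agreement of almost all Hecke eigenvalues only pins down the near-equivalence class of the metaplectic representation generated by $\theta$, and identifying that class with the Waldspurger packet of $\pi$ (and controlling which member $\theta$ generates, since different members have Fourier coefficients supported on different square classes) is itself a major theorem of Waldspurger. Second, the ``local--global identity'' expressing $|\ell_n(\varphi)|^2$ as $L(\Theta,\chi_{-1}^\lambda\chi^{-1}\chi_n,\lambda)$ times a constant and local factors \emph{is} the theorem in all but name; invoking ``Rankin--Selberg or doubling unfolding'' names the technique but not the delicate parts (local functional equations, dichotomy and nonvanishing for the local theta correspondence, and the archimedean Whittaker computation that produces the precise power of $n$), as you yourself acknowledge. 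As a reduction of the statement to known results in the literature your outline is sound; as a self-contained argument it has genuine gaps, and for the purposes of this paper the bare citation is the appropriate treatment.
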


If we can find \(n_0\) with \(L(\Theta,\chi_{-1}^\lambda\chi^{-1}\chi_{n_0},\lambda)\neq 0\), then we can get an expression of \(a_n\) with respect to \(L(\Theta,\chi_{-1}^\lambda\chi^{-1}\chi_{n},\lambda)\). In particular, if \(\chi\) is a quadratic character, then
\begin{equation}\label{WF3}
a_{n}=\pm a_{n_0}\sqrt{\frac{L(\Theta,\chi_{-1}\chi_f^{-1}\chi_{n},1)n^{\frac{k}{2}-1}}{L(\Theta,\chi_{-1}\chi_f^{-1}\chi_{n_0},1)n_0^{\frac{k}{2}-1}}}.
\end{equation}
Here we recall $\chi_f=\left(\frac{4D}{.}\right)$.

\subsection{Application to $d=3$ and $h_f=2$}
Let \(d=3\) and $h_f=2$. In this case we write \(\fG_f=\{f,g\}\),  by Lemma \ref{Hecke}, \(\theta_f-\theta_g\) is an eigenform. We can give an explicit formula of \(r_f(n)\) applying both Theorems \ref{SL} and \ref{WF}. For this purpose, let us recall the following condition:
\begin{itemize}
  \item (\textbf{Sh-E}) The Shimura lift of $\theta_f-\theta_g$ corresponds to an elliptic curve $\CE$ over $\BQ$.
\end{itemize}
For any square free integer $m$, we set $\CE^{(m)}$ to be the quadratic twist of $\CE$ by the extension $\BQ(\sqrt{m})/\BQ$. If $m=1$, $\CE^{(m)}$ is equal to $\CE$. Let $L(\CE^{(m)},s)$ be the $L$-function of $\CE^{(m)}$. It is a holomorphic function on the whole complex plane. Let $\Omega_{\CE^{(m)}}$ be the real period of $\CE^{(m)}$ defined in the introduction. Recall that $D=b_1b_2b_3$ and $D^\circ$ is the square free part of $D$. We define the algebraic $L$-values of $\CE^{(-mD^\circ)}$ to be
\[\CL(m)=\frac{L(\CE^{(-mD^\circ)})}{\Omega_{\CE^{(-mD^\circ)}}}.\]
Let $\xi=\frac{\mu_f}{\mu_f+\mu_g}$ where $\mu_f=|\CO(A_f)|$. For any positive square free integer $b$, we denote by $h(-b)$ the class number of $\BQ(\sqrt{-b})$. Recall the rational number $\lambda_f(n)$ given in Proposition \ref{main-thm-2}.

\begin{prop}\label{MT}
Let
\[\theta_f-\theta_g=\sum^\infty_{n=1}a_nq^n.\]
Assume the condition (\textbf{Sh-E}) holds. For any positive square free integer $n$ coprime to $2D$, we have
\begin{equation}
r_f(n)=\xi a_{n}+\lambda_f(n)h(-nD^\circ).
\end{equation}

\end{prop}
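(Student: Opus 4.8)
The plan is to reduce the identity to a short formal manipulation built on three ingredients already in hand: the definition of the genus representation number $G_f(n)$, the weight normalisation encoded in \eqref{1-1-f-2}, and Proposition \ref{C}. Since we are in the case $h_f=2$ with $\fG_f=\{f,g\}$, the defining sum $G_f(n)=\sum_{h\in\fG_f}\xi_h r_h(n)$ collapses to the two terms
\[
G_f(n)=\xi_f r_f(n)+\xi_g r_g(n).
\]

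First I would record that the two weights sum to one. Applying \eqref{1-1-f-2} with $\fG_f=\{f,g\}$, so that $\sum_{h\in\fG_f}\mu_h^{-1}=(\mu_f+\mu_g)/(\mu_f\mu_g)$, one computes $\xi_g=\mu_f/(\mu_f+\mu_g)$ and $\xi_f=\mu_g/(\mu_f+\mu_g)$, whence $\xi_f+\xi_g=1$. Writing $\xi=\xi_g$ as in the statement, this gives $\xi_f=1-\xi$ and therefore
\[
G_f(n)=(1-\xi)r_f(n)+\xi\,r_g(n).
\]

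Next I would use the defining relation of the coefficients $a_n$. Since $\theta_f-\theta_g=\sum_{n} a_n q^n$ while $\theta_h=\sum_n r_h(n)q^n$ for each $h\in\fG_f$, comparing $q$-expansions yields $a_n=r_f(n)-r_g(n)$, i.e.\ $r_g(n)=r_f(n)-a_n$. Substituting this into the previous display, the $r_f(n)$ contributions combine to a single $r_f(n)$ and I obtain $G_f(n)=r_f(n)-\xi a_n$, equivalently $r_f(n)=\xi a_n+G_f(n)$.

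Finally I would invoke Proposition \ref{C}, which furnishes the non-negative rational number $\lambda_f(n)$ of \eqref{gamma} with $G_f(n)=\lambda_f(n)h(-Dn)$. Because $D/D^\circ$ is a perfect square, the imaginary quadratic fields $\BQ(\sqrt{-Dn})$ and $\BQ(\sqrt{-D^\circ n})$ coincide, so $h(-Dn)=h(-nD^\circ)$; inserting this gives $r_f(n)=\xi a_n+\lambda_f(n)h(-nD^\circ)$, as claimed. There is no genuine obstacle in this proposition: the argument is a purely formal computation, and the only two points needing a moment's care are the verification $\xi_f+\xi_g=1$ from \eqref{1-1-f-2} and the identification of the two class numbers through the equality of the associated quadratic fields. (Note that the hypothesis \textbf{(Sh-E)} is not actually used here; it is retained for consistency with the later parts of Theorem \ref{main-thm-3}, where the eigenform/Shimura-lift structure is essential.)
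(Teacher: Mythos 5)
Your proof is correct and is essentially the same as the paper's: the paper forms the linear system $(1-\xi)r_f(n)+\xi r_g(n)=\lambda_f(n)h(-D^\circ n)$, $r_f(n)-r_g(n)=a_n$ (from Proposition \ref{C} and the definition of the $a_n$) and solves it by Cramer's rule, which is just a repackaging of your direct substitution. Your side remarks are also accurate: the identification $h(-Dn)=h(-nD^\circ)$ via $D/D^\circ$ being a square is exactly the point the paper relies on, and indeed the hypothesis (\textbf{Sh-E}) plays no real role in this particular proposition.
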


\begin{proof}
Note that \(\chi_f=\left(\frac{4D}{.}\right)\). From Proposition \ref{C} and Theorem \ref{W}, we get a system of linear equations
	$$
    		\left\{
    		\begin{aligned}
        		& (1-\xi)r_f(n)+\xi r_g(n)=\lambda_f(n)h(-D^\circ n)\\
        		& r_f(n)-r_g(n)=a_n.
    		\end{aligned}
    		\right.
	$$
Since the determinant of the coefficients of the linear equation is non-zero, which is in fact equal to $-1$. The result follows by solving the linear equation via Cramer's rule.
\end{proof}

We recall the  {\it $N_f$-square-linked}  relation between
two square free positive integers defined in the introduction. We say the two such integers $n_1,n_2$ are {\it $N_f$-square-linked}  if $n_1/n_2$ is a square in $\BQ^\times_\ell$ for each prime $\ell$ dividing $N_f$.

Let $\fS_1$ be a set of positive square free integers which consists of representatives of the equivalence classes of the {\it $N_f$-square-linked}
relation. Note that $N_f$ and $2D$ have the same prime divisors. From the proof of Corollary \ref{gamma}, we can take $\fS_1$ as a subset of $\fS(3)$. we now make the following assumption on $\fS_1$:
\begin{itemize}
  \item (\textbf{In-Nv})There exists a positive square free integer $n_0\in\fS_1$ such that
  $$L(\CE^{(-D^\circ n_0)},1)\neq0.$$
\end{itemize}

\begin{prop}\label{MT2}
Keep the notation as in Proposition \ref{MT}. Under the condition (\textbf{In-Nv}), if $n$ and $n_0$ are
{\it $N_f$-square-linked}, then we have
\begin{equation}\label{EF}
r_f(n)=\pm \xi a_{n_0}\sqrt{\frac{\mathcal{L}(n)}{\mathcal{L}(n_0)}}+\lambda_f(n)h(-nD^\circ).
\end{equation}
\end{prop}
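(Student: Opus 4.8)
The plan is to treat Proposition \ref{MT} as the starting point: it already gives the clean decomposition $r_f(n)=\xi a_n+\lambda_f(n)h(-nD^\circ)$, so the entire new content of \eqref{EF} is the identity $a_n=\pm a_{n_0}\sqrt{\CL(n)/\CL(n_0)}$. First I would invoke the Waldspurger identity in the quadratic-character form \eqref{WF3}, applied to $\theta_f-\theta_g=\sum a_n q^n\in S_{\frac{3}{2}}(N_f,\chi_f)$ with $k=3$, $\lambda=1$, and $\chi=\chi_f=\left(\frac{4D}{\cdot}\right)$. Here the hypothesis that $n$ and $n_0$ are \emph{$N_f$-square-linked} plays a double role: it is exactly the condition $n_0/n\in(\BQ_\ell^\times)^2$ for all $\ell\mid N_f$ needed to apply Theorem \ref{W}, and, since $\chi_f$ is supported on primes dividing $N_f$, it forces $\chi_f(n_0/n)=1$, which is precisely what lets the character factor in \eqref{WF} drop out and produce the symmetric square-root shape \eqref{WF3}. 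This yields $a_n=\pm a_{n_0}\sqrt{\frac{L(\Theta,\chi_{-1}\chi_f^{-1}\chi_n,1)\sqrt{n}}{L(\Theta,\chi_{-1}\chi_f^{-1}\chi_{n_0},1)\sqrt{n_0}}}$, where $\Theta$ is the Shimura lift of $\theta_f-\theta_g$.

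Next I would identify the twisted $L$-values with $L$-values of quadratic twists of $\CE$. Since the form is positive definite we have $D>0$, and as Kronecker symbols $\chi_{-1}\chi_f^{-1}\chi_n=\left(\frac{-1}{\cdot}\right)\left(\frac{4D}{\cdot}\right)\left(\frac{n}{\cdot}\right)=\left(\frac{-Dn}{\cdot}\right)$, the quadratic character attached to $\BQ(\sqrt{-Dn})=\BQ(\sqrt{-D^\circ n})$. Under the assumption (\textbf{Sh-E}) the eigenform $\Theta$ is the weight-$2$ cusp newform of $\CE$ (so Theorem \ref{W} genuinely applies to a cusp form), whence $L(\Theta,\chi_{-1}\chi_f^{-1}\chi_n,1)=L(\CE^{(-D^\circ n)},1)$, and likewise for $n_0$. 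Condition (\textbf{In-Nv}) provides an $n_0\in\fS_1$ with $L(\CE^{(-D^\circ n_0)},1)\neq0$, so both $\CL(n_0)\neq0$ and $a_{n_0}\neq0$ and the ratio is well defined. Writing $L(\CE^{(-D^\circ n)},1)=\CL(n)\,\Omega_{\CE^{(-D^\circ n)}}$ converts the analytic ratio into $\sqrt{\frac{\CL(n)\,\Omega_{\CE^{(-D^\circ n)}}\sqrt{n}}{\CL(n_0)\,\Omega_{\CE^{(-D^\circ n_0)}}\sqrt{n_0}}}$.

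The hard part will be the period comparison, which is exactly where the square-link hypothesis is indispensable: I must prove the \emph{exact} identity $\Omega_{\CE^{(-D^\circ n)}}/\Omega_{\CE^{(-D^\circ n_0)}}=\sqrt{n_0/n}$, with no stray power of $2$. Two observations drive this. First, because $n,n_0>0$ the two twisting parameters $-D^\circ n$ and $-D^\circ n_0$ share the same sign, so $\CE^{(-D^\circ n)}(\BR)$ and $\CE^{(-D^\circ n_0)}(\BR)$ have the same number of connected components; hence the factor relating $\Omega$ to $\Omega^+$ is identical for both twists and cancels in the ratio. Second, the $N_f$-square-linked condition makes $-D^\circ n$ and $-D^\circ n_0$ differ by a square in $\BQ_\ell^\times$ at every prime $\ell\mid N_f$, that is at $2$ and at every bad prime of $\CE$; the two twists are therefore isomorphic over each such $\BQ_\ell$, have the same Kodaira types and local minimal models, and the rational discrepancy in the naive-to-minimal period comparison is the same for both. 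Since the naive period of a twist by $d$ scales like $|d|^{-1/2}$, the minimal real periods satisfy $\Omega^+_{\CE^{(-D^\circ n)}}/\Omega^+_{\CE^{(-D^\circ n_0)}}=\sqrt{D^\circ n_0/(D^\circ n)}=\sqrt{n_0/n}$. Substituting this into the square root collapses the $\sqrt{n}$ and period factors and leaves exactly $a_n=\pm a_{n_0}\sqrt{\CL(n)/\CL(n_0)}$.

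Finally I would feed this into the identity of Proposition \ref{MT}, giving $r_f(n)=\pm\xi a_{n_0}\sqrt{\CL(n)/\CL(n_0)}+\lambda_f(n)h(-nD^\circ)$, which is \eqref{EF}. I expect the only real subtlety to be the exact period ratio in the third paragraph; the character identifications and the passage from analytic to algebraic $L$-values are routine once Theorems \ref{W} and \ref{SL}, together with (\textbf{Sh-E}) and (\textbf{In-Nv}), are in place.
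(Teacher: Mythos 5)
Your proof is correct and follows essentially the same route as the paper, whose own proof of this proposition is the one-line remark that it ``directly follows from Proposition \ref{MT} and \eqref{WF3}.'' The additional work you do --- identifying $\chi_{-1}\chi_f^{-1}\chi_n$ with the character of $\BQ(\sqrt{-Dn})$, matching the twisted $L$-values with $L(\CE^{(-D^\circ n)},1)$ (where strictly only the equality of the \emph{ratios} is needed, the possible bad Euler-factor discrepancies being identical for $n$ and $n_0$ by the square-linked hypothesis), and proving the exact period ratio $\Omega_{\CE^{(-D^\circ n)}}/\Omega_{\CE^{(-D^\circ n_0)}}=\sqrt{n_0/n}$ --- simply makes explicit what the paper leaves implicit in passing from \eqref{WF3} to the quantity $\sqrt{\CL(n)/\CL(n_0)}$ appearing in \eqref{EF}.
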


We remark from \eqref{WF3}, $\sqrt{\frac{\mathcal{L}(n)}{\mathcal{L}(n_0)}}$ is a rational number.

\begin{proof}
The result directly follows from Proposition \ref{MT} and \eqref{WF3}.
\end{proof}

From the formula \eqref{EF}, we know $r_f(n)$ is related to the algebraic $L$-values of elliptic curves. Now we can apply a $2$-divisibility result on algebraic $L$-values of the quadratic twists of elliptic curves by one of us, see \cite{Zhai}. We fix a normalised additive $2$-adic valuation $v_2$ on $\overline{\BQ}_2$ such that $v_2(2)=1$.

\medskip

\begin{thm}\label{Z}
Let \(E/\mathbb{Q}\) be an optimal eliptic curve with conductor \(C\). Then for all square free integer \(M\equiv 1\ (mod\ 4)\) with \((M,C)=1\), we have
\begin{equation}\label{ZF}
v_2(L(E^{(M)},1)/\Omega_{E^{(M)}})\ge t_E(M)-1-v_2(\nu_E),
\end{equation}
where \(\nu_E\) is the Manin constant of \(E\) and \(t_E(M)=\sum\limits_{q|M} t(q)\), with
$$t(q)=
    		\begin{cases}
        		1 & \textrm{if  $2|c_q(E^{(M)})$,} \\
        		0 & \textrm{if  $2\nmid c_q(\mathcal{E}^{(M)})$.}
    		\end{cases}$$
Here \(c_p(E)\) is the Tamagawa number of \(E/\mathbb{Q}\) at a finite prime \(p\).
\end{thm}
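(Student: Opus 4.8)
The plan is to follow the modular-symbol route indicated in the introduction: first convert the algebraic $L$-value into a finite character sum of modular symbols, and then run an induction on $\mu(M)$, the number of prime divisors of $M$, so that each prime contributes $t(q)$ to the $2$-adic valuation. First I would fix the newform $f_E=\sum a_n q^n$ attached to the optimal curve $E$ and invoke the classical Birch--Stephens--Manin formula for central twisted $L$-values. Since $M\equiv 1\pmod 4$, the root number is favourable and one obtains an identity of the shape
$$\frac{L(E^{(M)},1)}{\Omega_{E^{(M)}}}=\frac{1}{\nu_E}\sum_{a=1}^{M}\left(\frac{M}{a}\right)\varphi\!\left(\tfrac{a}{M}\right),$$
where $\varphi(a/M)$ is the plus modular symbol normalised so that $\nu_E\cdot\varphi$ is $2$-integral. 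The Gauss-sum identity $\sum_a\left(\frac{M}{a}\right)e^{2\pi i a/M}=\sqrt{M}$ for $M\equiv 1\pmod 4$ cancels the $\sqrt{M}$ hidden in $\Omega_{E^{(M)}}$ and guarantees rationality, while the prefactor $1/\nu_E$ is exactly what will produce the $-v_2(\nu_E)$ term in the final bound.

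Next I would record the integral and recursive structure of the symbols $\varphi(a/M)$. The Manin relations together with the Hecke relation attached to $T_q$ at primes $q\mid M$ give linear identities among the $\varphi(a/M)$ that are compatible with splitting the sum via the Chinese Remainder Theorem. The base case of the induction is $\mu(M)=1$, i.e.\ $M=q$ prime: here the single character sum is analysed directly, and Tate's algorithm converts the condition $2\mid c_q(E^{(M)})$ on the Tamagawa number into a $2$-divisibility of the corresponding local factor, which accounts for the summand $t(q)$; the residual $-1$ is the normalisation constant coming from the factor $\tfrac12$ in the definition of the plus symbol. For the inductive step I would write $M=Nq$ and decompose $\sum_{a\bmod M}$ along residues modulo $N$ and modulo $q$. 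Multiplicativity of the Jacobi symbol and a further Gauss-sum manipulation then factor the full sum as the $N$-sum --- to which the inductive hypothesis applies, yielding valuation at least $t_E(N)-1-v_2(\nu_E)$ after restoring $\nu_E$ --- times a local factor at $q$, up to explicit correction terms.

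The hard part will be precisely this inductive step: showing that the local factor at each newly introduced prime $q$ raises the $2$-adic valuation by at least $t(q)$, while the correction terms never cause the valuation to drop below the target. This amounts to matching the combinatorial contribution of the character sum at $q$ --- governed by $a_q\bmod 2$ and the local behaviour of $\varphi$ near the cusps --- against the geometric Tamagawa number $c_q(E^{(M)})$ as computed by Tate's algorithm, and carefully tracking the $2$-adic cancellations in the resulting recursion. Once this local comparison is secured, summing the contributions $t(q)$ over all $q\mid M$ gives $t_E(M)$; subtracting the $1$ inherited from the base case and the $v_2(\nu_E)$ coming from clearing the Manin constant then yields the asserted inequality $v_2\!\left(L(E^{(M)},1)/\Omega_{E^{(M)}}\right)\ge t_E(M)-1-v_2(\nu_E)$.
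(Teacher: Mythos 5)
The first thing to note is that the paper contains no proof of Theorem \ref{Z} at all: it is imported as an external input from \cite{Zhai}, and the only description of its proof is the sentence in the introduction attributing it to ``an induction argument by using modular symbols.'' So your outline coincides, in broad strokes, with the strategy of the cited source rather than with any argument written in the paper itself; there is nothing internal to compare against step by step.

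Judged as a proof, however, your proposal has a genuine gap, and you name it yourself: the deferred ``hard part'' \emph{is} the theorem. Everything you actually carry out --- the Birch--Stephens-type expression of $L(E^{(M)},1)/\Omega_{E^{(M)}}$ as a Jacobi-symbol-weighted sum of modular symbols, the Chinese Remainder splitting of the sum for $M=Nq$, and the Manin/Hecke relations --- is standard machinery that by itself yields no lower bound on $v_2$; the entire content lies in showing that the local factor at each new prime $q$ gains a factor of $2$ precisely when $2\mid c_q(E^{(M)})$, and that the correction terms in the recursion never push the valuation below the target. That comparison between the analytic side (the character sum at $q$, governed by $a_q \bmod 2$) and the geometric side (Tate's algorithm for the additively-reducing twisted curve at $q$) is not sketched beyond naming it, and there is no indication of how to keep it uniform through the induction, which is where such arguments typically break. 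Two subsidiary points in the part you did write also need care: (i) the normalisation claims --- that $\nu_E\varphi$ is $2$-integral and that the $-1$ in the bound comes from the factor $\tfrac12$ in the plus symbol --- are not automatic, since the Manin constant measures the discrepancy between the modular lattice of $X_0(C)$ and the N\'eron lattice of $E$, and misplacing a single power of $2$ falsifies the inequality; (ii) the assertion that $M\equiv 1\pmod 4$ makes ``the root number favourable'' is beside the point, since the root number of $E^{(M)}$ depends on the sign of $E$ and on $M$ modulo the conductor, and when it equals $-1$ the inequality is vacuous because $L(E^{(M)},1)=0$; the congruence on $M$ is needed instead for the local analysis at $2$.
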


\medskip

Now, we can easily see the following theorem completes the proof of Theorem \ref{main-thm-3}.

\begin{thm}\label{V}
Keep the conditions of Propositions \ref{MT} and \ref{MT2}. Let \(n>3\) be a square free integer coprime to \(2D\). Define \(E=\CE^{(-D^\circ)}\) if \(n\equiv 1\ (mod\ 4)\) and \(E=\CE^{(D^\circ)}\) if \(n\equiv 3\ (mod\ 4)\). Assume that $E$ is optimal. Let \(n_0\in \fS_1\) satisfy (\textbf{In-Nv}). If $n$ and $n_0$ are {\it $N_f$-square-linked},
then
\begin{equation}\label{VF}
v_2(r_f(n))\ge \min \left\{A_{n_0}+\frac{1}{2}t_E\left((\frac{-1}{n})n\right),B_{n_1}+v_2(h(-nD^\circ))\right\},
\end{equation}
with \(n_1\in \fS(3)\) satisfying \(\lambda_f(n_1)=\lambda_f(n)\), where
	$$
    		\left\{
    		\begin{aligned}
        		& A_{n_0}=v_2\left(\frac{\xi a_{n_0}}{\sqrt{\mathcal{L}(n_0)}}\right)-\frac{1}{2}(1+v_2(\nu_\mathcal{E}))\\
        		& B_{n_1}=v_2(\lambda_f(n_1))
    		\end{aligned}
    		\right.
	$$
are constants depend only on \(n_0\in \fS_1\) satisfying (\textbf{In-Nv}) and \(n_1\in \fS(3)\).

Furthermore, let \(\mu(n)\) be the number of distinct odd prime divisors of \(n\). Then there exists constants \(\kappa \in \mathbb{N}\) and \(A\in \frac{1}{2}\mathbb{Z}\) such that
\begin{equation}\label{VF2}
v_2(r_f(n))\ge A+\frac{1}{2}t_E\left((\frac{-1}{n})n\right)
\end{equation}
holds for all $n$ satisfying \(\mu(n) \ge \kappa\).
\end{thm}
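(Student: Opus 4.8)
The plan is to feed the two-term expression for $r_f(n)$ from Proposition \ref{MT2} into the ultrametric inequality and estimate each summand separately. Since $n$ and $n_0$ are $N_f$-square-linked we may write
\[
r_f(n) = \pm\,\xi a_{n_0}\sqrt{\frac{\CL(n)}{\CL(n_0)}} + \lambda_f(n)\,h(-nD^\circ),
\]
where condition (\textbf{In-Nv}) guarantees $\CL(n_0)\neq0$ (hence, via Waldspurger, $a_{n_0}\neq0$), so $A_{n_0}$ is a finite rational. As $v_2(x+y)\ge\min\{v_2(x),v_2(y)\}$, it suffices to bound the two terms. The essential preliminary step for the first term is to rewrite $\CL(n)$ as an algebraic $L$-value to which Theorem \ref{Z} applies: using $E=\CE^{(-D^\circ)}$ for $n\equiv1$ and $E=\CE^{(D^\circ)}$ for $n\equiv3\pmod4$, together with composition of quadratic twists, one checks $\CE^{(-D^\circ n)}=E^{(n^*)}$ where $n^*=\left(\frac{-1}{n}\right)n$. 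The twist by $\left(\frac{-1}{n}\right)$ is exactly what makes $n^*\equiv1\pmod4$, and since $n$ is coprime to $2D$ (whence $n^*$ is coprime to $C_E$), we obtain $\CL(n)=L(E^{(n^*)},1)/\Omega_{E^{(n^*)}}$, precisely the quantity bounded in \eqref{ZF}.

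Granting this identification, I would compute
\[
v_2\!\left(\pm\,\xi a_{n_0}\sqrt{\tfrac{\CL(n)}{\CL(n_0)}}\right) = v_2\!\left(\frac{\xi a_{n_0}}{\sqrt{\CL(n_0)}}\right) + \tfrac12\, v_2(\CL(n)),
\]
and invoke Theorem \ref{Z} with $M=n^*$ to get $v_2(\CL(n))\ge t_E(n^*)-1-v_2(\nu_E)$, which yields the lower bound $A_{n_0}+\tfrac12 t_E(n^*)$ for this valuation. For the second term, Corollary \ref{B} gives $\lambda_f(n)=\lambda_f(n_1)$ for a representative $n_1\in\fS(3)$, so its valuation is exactly $B_{n_1}+v_2(h(-nD^\circ))$. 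Taking the minimum gives \eqref{VF}.

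For \eqref{VF2} the idea is a growth-rate comparison showing the second term has strictly larger valuation once $n$ has enough prime factors, so that the minimum is always realized by the first. On one side $t_E(n^*)\le\mu(n)$, because each local contribution $t(q)$ lies in $\{0,1\}$ and $n^*$ has $\mu(n)$ distinct (odd) prime divisors, so the first bound is at most $A_{n_0}+\tfrac12\mu(n)$. On the other side, genus theory for $\BQ(\sqrt{-nD^\circ})$ supplies the crucial linear lower bound: its class group has $2$-rank equal to (number of ramified primes) $-1$, and since every odd prime dividing the square-free integer $nD^\circ$ ramifies, $v_2(h(-nD^\circ))\ge \mu(n)+\mu(D)-1$. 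Hence the second valuation is at least $\fB+\mu(n)+\mu(D)-1$, which exceeds $A_{n_0}+\tfrac12\mu(n)$ precisely when $\mu(n)\ge 2\bigl(A_{n_0}-\fB-\mu(D)\bigr)+2$. Choosing $n_0$ to realize $\fA$ and unwinding $A_{n_0}=\fA-\tfrac12(1+v_2(\nu_E))$ converts this threshold into $\kappa=\max\{2(\fA-\fB-\mu(D))+1-v_2(\nu_E),\,1\}$ and sets $A=\fA-\tfrac12(1+v_2(\nu_E))$, giving \eqref{VF2} for all $n$ with $\mu(n)\ge\kappa$.

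The main obstacle I anticipate is not any single estimate but ensuring the two sides are genuinely comparable. The heart of the matter is the genus-theoretic bound $v_2(h(-nD^\circ))\ge\mu(n)+\mu(D)-1$: it must be verified with the correct count of ramified primes (the prime $2$ and the even part of $D^\circ$ can only increase the count, so they help rather than hurt), and it is exactly this \emph{linear} growth in $\mu(n)$ that is needed to dominate the $\tfrac12\mu(n)$ growth coming from the $L$-value term, which is why the factor $2$ and the shape of $\kappa$ emerge. The only other delicate point is consistency of normalizations: one must check that the Manin constant and period appearing in Theorem \ref{Z} are the same data entering $A_{n_0}$, so that the bound from \eqref{ZF} plugs in without a spurious discrepancy. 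Everything beyond these two points is a direct application of the ultrametric inequality.
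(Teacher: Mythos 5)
Your argument follows the paper's proof essentially step for step: \eqref{VF} is obtained, exactly as in the paper, by feeding Proposition \ref{MT2} into the ultrametric inequality and bounding the $L$-value term via Theorem \ref{Z}, and \eqref{VF2} by comparing the two terms using $t_E(n^*)\le\mu(n)$ against a genus-theoretic lower bound for $v_2(h(-nD^\circ))$, then solving for the threshold on $\mu(n)$. You even supply a detail the paper leaves implicit, namely the identification $\CE^{(-D^\circ n)}=E^{(n^*)}$ with $n^*=\left(\frac{-1}{n}\right)n\equiv 1 \ (\mathrm{mod}\ 4)$ coprime to $C_E$, which is what legitimises applying Theorem \ref{Z}; your flag about matching the Manin-constant normalisation is also apt, since the theorem's $A_{n_0}$ is written with $\nu_{\CE}$ while Theorem \ref{Z} involves $\nu_E$.

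The one slip is in the genus-theory inequality. The odd primes ramifying in $\BQ(\sqrt{-nD^\circ})$ are exactly the odd primes dividing the square free integer $nD^\circ$; their number is $\mu(n)+\mu(D^\circ)$, not $\mu(n)+\mu(D)$. An odd prime dividing $D$ to an even power does not divide $D^\circ$ and does not ramify, so your claim $v_2(h(-nD^\circ))\ge\mu(n)+\mu(D)-1$ fails whenever $\mu(D)>\mu(D^\circ)$: for instance $D=9$, $D^\circ=1$, $n=7$ gives $h(-7)=1$, yet your bound would force $v_2(h(-7))\ge1$. The correct bound, $v_2(h(-nD^\circ))\ge\mu(n)+\mu(D^\circ)-1$, is the one the paper's proof uses (the prime $2$ can only enlarge the count, as you note), and with it your computation goes through verbatim, yielding $\kappa=\max\{2(\fA-\fB-\mu(D^\circ))+1-v_2(\nu_E),1\}$ and $A=\fA-\tfrac12(1+v_2(\nu_E))$. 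Since Theorem \ref{V} asserts only the existence of constants $\kappa$ and $A$, this error is harmless here; but be aware that the explicit $\kappa$ in part (3) of Theorem \ref{main-thm-3} carries the same $\mu(D)$, so your version reproduces the introduction's (inaccurate) constant rather than the proof's correct one — the two agree precisely when every odd prime divisor of $D$ divides $D^\circ$, in particular when $D$ is square free, as in all of the paper's worked examples.
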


\begin{proof}
Equation (\ref{VF}) is a direct corollary of Propositions \ref{MT2} and \ref{Z}. By our choice, the set \(\fS_1\) is a finite subset of \(\fS(3)\), we can define $A$ to be the minimal value of $A_{n_0}$ such that $n_0\in \fS_1 \textrm{satisfying ((\textbf{In-Nv}))}$, and $B=\min \{B_{n_1}\}_{n_1\in \fS(3)}$.
Then equation (\ref{VF}) implies
\begin{equation}\label{VF3}
  v_2(r_f(n))\ge \min \left\{A+\frac{1}{2}t_E\left((\frac{-1}{n})n\right),B+v_2(h(-nD^\circ))\right\}.
\end{equation}
By Gauss's genus theory, we have \(v_2(h(-D^\circ n))\ge \mu(n)+\mu(D^\circ)-1\). Let \(\kappa=\max \{2(A-B+1-\mu(D^\circ)),1\}\), then for any $n$ satisfying \(\mu(n)\ge \kappa\), by using the inequalities
\[\frac{1}{2}\mu(n)\geq A-B+1-\mu(D^\circ), \]
and
\(t_E\left((\frac{-1}{n})n\right) \le \mu(n)\),
we get
\[B+v_2(h(-D^\circ n)) \ge A+\frac{1}{2}t_E\left((\frac{-1}{n})n\right).\]
Thus (\ref{VF2}) follows from \eqref{VF3}.
\end{proof}

\bigskip

\section{Examples and Numerical data for $v_2(r_f(n))$}\label{num-1}

\subsection{Examples} \label{Eg}
Let \(f(x,y,z)=x^2+y^2+pz^2\), where \(p>3\) is a prime number. In this case, \(N_f=4p\), \(D^\circ=p\). Consider \(n>3\), an integer square free and coprime to \(2p\).

\begin{prop}\label{gfp}
\(\lambda_f(n)\) has the following explicit form:
\begin{enumerate}
  \item If \(p\equiv 1\ (mod\ 4)\),
	$$
		\lambda_f(n)=
    		\left\{
    		\begin{aligned}
        		\frac{12}{p+1} & \quad pn\equiv 1\mod 8\\
        		\frac{24}{p+1} & \quad pn\equiv 3\mod 8\\
        		\frac{12}{p+1} & \quad pn\equiv 5\mod 8\\
        		0 & \quad pn\equiv 7\mod 8.
    		\end{aligned}
    		\right.
	$$
  \item If \(p\equiv 3\ (mod\ 4)\),
	$$
		\lambda_f(n)=
    		\left\{
    		\begin{aligned}
        		\frac{4}{p-1} & \quad pn\equiv 1\mod 8\\
        		\frac{24}{p-1} & \quad pn\equiv 3\mod 8\\
        		\frac{4}{p-1} & \quad pn\equiv 5\mod 8\\
        		\frac{16}{p-1} & \quad pn\equiv 7\mod 8.
    		\end{aligned}
    		\right.
	$$
\end{enumerate}
\end{prop}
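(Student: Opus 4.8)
The plan is to evaluate the closed formula \eqref{gamma} of Proposition \ref{C} directly for the specific form $f=x^2+y^2+pz^2$, where $D=D^\circ=p$, $N_f=4p$, and $K=\BQ(\sqrt{-pn})$. Since the product in \eqref{gamma} runs over primes $\ell\mid 2D=2p$, only the two local factors at $\ell=2$ and $\ell=p$ intervene, alongside the global prefactor $\frac{24}{w_K}\sqrt{n/(p|D_K|)}$. First I would record the global data: because $pn$ is square free with $pn\ge 5\cdot 5>3$, the field $K$ contains only $\pm 1$, so $w_K=2$; and $|D_K|=4pn$ or $pn$ according as $pn\equiv 1$ or $3\pmod 4$, which gives $\sqrt{n/(p|D_K|)}=\frac{1}{2p}$ or $\frac{1}{p}$ respectively. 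This split on $pn\bmod 4$ is what ultimately propagates into the two residue regimes of the statement.

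Next I would treat the factor at $\ell=p$. Here $p\nmid b_1b_2=1$ while $p\mid b_3$, so Corollary \ref{delta3}(2) yields $\delta_p(n,f)=1-\frac{1}{p}\left(\frac{-1}{p}\right)$, and $\chi(p)=0$ since $p\mid D_K$. Combining with the Euler factor $\left(1-\frac{1}{p^2}\right)^{-1}$, the entire contribution at $\ell=p$ collapses to $\frac{p}{p+1}$ when $p\equiv 1\pmod 4$ and to $\frac{p}{p-1}$ when $p\equiv 3\pmod 4$; this is precisely the origin of the denominators $p\pm 1$ in the two cases of the Proposition. For the factor at $\ell=2$ I would first read off $\chi(2)$ from $D_K\bmod 8$: it vanishes when $pn\equiv 1\pmod 4$ (even discriminant), equals $+1$ when $pn\equiv 7\pmod 8$, and equals $-1$ when $pn\equiv 3\pmod 8$. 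The remaining input, and the only genuinely laborious one, is $\delta_2(n,f)$, which by Corollary \ref{delta3}(4) is governed by $b_1,b_2,b_3,n\bmod 8$, i.e. by $p$ and $n$ modulo $8$.

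The heart of the argument is therefore the explicit count $\delta_2(n,f)=2^{-6}\,\bigl|\{(x,y,z)\in(\BZ/8\BZ)^3:\ x^2+y^2+pz^2\equiv n\ (\mathrm{mod}\ 8)\}\bigr|$. I would carry this out by tabulating the weighted distribution of a square modulo $8$ — the residues $0,1,4$ are attained with multiplicities $2,4,2$ — and convolving the contributions of $x^2$, $y^2$, and $pz^2$ (the last shifted by the unit $p\bmod 8$). The main obstacle is a bookkeeping one: although $\delta_2(n,f)$ could \emph{a priori} depend on the pair $(p\bmod 8,\,n\bmod 8)$, one must verify that the sixteen combinations collapse so that the count depends only on $pn\bmod 8$; this collapse is exactly what makes the final $\lambda_f(n)$ a function of $pn\bmod 8$ alone. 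For instance, when $p\equiv 1\pmod 4$ and $pn\equiv 7\pmod 8$, a direct enumeration shows the congruence $x^2+y^2+pz^2\equiv n\pmod 8$ has no solution (the classical ternary obstruction at $2$, visible as ``$7\bmod 8$ is not a sum of three squares''), so $\delta_2(n,f)=0$ and hence $\lambda_f(n)=0$, matching the last line of case (1).

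Finally I would assemble the pieces, substituting $w_K=2$, the appropriate square-root value, the factor $\frac{p}{p\pm1}$ at $\ell=p$, and the factor $\delta_2(n,f)\left(1-\frac{\chi(2)}{2}\right)\left(1-\frac14\right)^{-1}$ at $\ell=2$, then simplifying within each of the four classes $pn\bmod 8$ (split according to $p\bmod 4$). As consistency checks one finds $\delta_2=\tfrac32$ in the class $pn\equiv 1\pmod 8$ with $p\equiv 1\pmod 4$, producing $\lambda_f(n)=\frac{8}{p+1}\cdot\frac32=\frac{12}{p+1}$; $\delta_2=1$ in the class $pn\equiv 3\pmod 8$ with $p\equiv 1\pmod 4$, producing $\frac{24}{p+1}$; and $\delta_2=2$ in the class $pn\equiv 7\pmod 8$ with $p\equiv 3\pmod 4$, producing $\frac{8}{p-1}\cdot 2=\frac{16}{p-1}$, all in agreement with the stated table. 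I expect no conceptual difficulty beyond the careful mod-$8$ enumeration and the verification that its output factors through $pn\bmod 8$.
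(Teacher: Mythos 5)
Your proposal is correct and is exactly the paper's (unspelled-out) proof: the paper disposes of this proposition with the single line ``It is a direct calculation of Lemma \ref{delta3},'' i.e.\ precisely the evaluation of formula \eqref{gamma} via Corollary \ref{delta3} that you carry out. Your intermediate values (the factor $\tfrac{p}{p\pm1}$ at $\ell=p$, $\chi(2)$ from $D_K \bmod 8$, and the mod-$8$ counts giving $\delta_2\in\{0,\tfrac12,1,\tfrac32,2\}$ depending only on $pn \bmod 8$ within each class of $p \bmod 4$) all check out against the stated table.
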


\begin{proof}
It is a direct calculation of Lemma \ref{delta3}.
\end{proof}

So in this case we can take \(\fS_1=\fS(3)\).

\medskip

Now assume \(h_f=2\). This happens when \(p=7,11,13,17,29\).

\begin{cor}\label{E}
If \(p=7,11,13,17,29\), then the Shimura lift of \(\theta_f-\theta_g\) is a modular form corresponding to \(\CE=14a,11a,26b,34a,58b\). Moreover,
\begin{equation}\label{EF1}
r_f(n)=\pm \xi_fa_{n_0}\sqrt{\frac{\mathcal{L}(n)}{\mathcal{L}(n_0)}}+\lambda_f(n)h(-pn)
\end{equation}
with the same condition in Theorem \ref{W} if \(L(E^{(-pn_0)},1)\neq 0\), where

	$$
		\xi_f=
    		\left\{
    		\begin{aligned}
        		\frac{2}{3} & \quad p=7,17\\
        		\frac{4}{5} & \quad p=11,29\\
        		\frac{4}{7} & \quad p=13.
    		\end{aligned}
    		\right.
	$$
Furthermore, the corresponding \(\kappa=\kappa_p\) is given by
	$$
		\kappa_p=
    		\left\{
    		\begin{aligned}
        		1 & \quad p=17\\
        		2 & \quad p=13, 29\\
		3 & \quad p=7\\
        		4 & \quad p=11.
    		\end{aligned}
    		\right.
	$$
\end{cor}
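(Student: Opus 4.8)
The plan is to read Corollary \ref{E} as the concrete specialization of Proposition \ref{MT2} and Theorem \ref{V} to the family $f=x^2+y^2+pz^2$, so that the substance of the proof is the explicit determination of the data feeding those general statements. First I would record the elementary invariants: here $D=D^\circ=p$ and $N_f=4p$, and I would check, by reduction of positive definite ternary forms (or a genus-table lookup), that for exactly $p\in\{7,11,13,17,29\}$ the genus $\fG_f$ has two classes, so $\fG_f=\{f,g\}$ with $g$ the unique non-principal reduced form. Once $g$ is exhibited, Lemma \ref{Hecke} guarantees that $\theta_f-\theta_g=\sum a_n q^n$ is a Hecke eigenform in $S_{3/2}(4p,\chi_f)$ with $\chi_f=\left(\frac{4p}{\cdot}\right)$.

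Next I would carry out the Shimura lift of $\theta_f-\theta_g$ via Theorem \ref{SL}. Since $\chi_f$ is quadratic, $\chi_f^2$ is trivial and, using Niwa's level, the lift $\Theta$ lies in $S_2(2p,\mathrm{triv})$. Computing finitely many eigenvalues $\lambda_q$ of $\theta_f-\theta_g$ from \eqref{E2}, inserting them into the Euler product \eqref{SLF}, and matching the resulting $q$-expansion against the newform tables (allowing for oldforms at level $2p$, which is what lets the $p=11$ lift correspond to the conductor-$11$ curve) identifies $\Theta$ with the weight-two newform attached to $\CE=14a,11a,26b,34a,58b$ respectively; this simultaneously verifies (\textbf{Sh-E}) for each $p$. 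Formula \eqref{EF1} is then immediate from Proposition \ref{MT2} after substituting $D^\circ=p$, so this part is formal once the curve is identified.

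To pin down the leading constant, I would compute the automorphism orders $\mu_f=|\CO(f)|$ and $\mu_g=|\CO(g)|$. For the diagonal form $f=x^2+y^2+pz^2$ the signed-permutation description gives $\mu_f=2^3\cdot 2=16$ (all three sign changes, together with the single transposition of the two equal coefficients), while $\mu_g$ is read off directly from the explicit $g$ in each case. The displayed values $2/3,4/5,4/7$ are then $\mu_f/(\mu_f+\mu_g)$, which is precisely the coefficient $\xi=\xi_g$ appearing in \eqref{EF}; for instance $p=7$ gives $16/(16+\mu_g)=2/3$, forcing $\mu_g=8$.

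Finally, for $\kappa_p$ I would specialize Theorem \ref{V}, namely $\kappa=\max\{2(A-B+1-\mu(D^\circ)),1\}$, using $\mu(D^\circ)=\mu(p)=1$ to get $\kappa_p=\max\{2(A-B),1\}$. Here $B=\min_{n_1\in\fS(3)}v_2(\lambda_f(n_1))$ is obtained directly from the table of $\lambda_f$ in Proposition \ref{gfp}, and $A=\min_{n_0}A_{n_0}$ with $A_{n_0}=v_2\!\left(\xi a_{n_0}/\sqrt{\CL(n_0)}\right)-\tfrac12(1+v_2(\nu_{\CE}))$ requires, for the finitely many $n_0\in\fS_1=\fS(3)$ satisfying (\textbf{In-Nv}), the coefficients $a_{n_0}$, the algebraic $L$-values $\CL(n_0)$, and the Manin constant $\nu_{\CE}$. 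The hard part will be exactly this last step: one must confirm (\textbf{In-Nv}) (automatic by \cite{BFH} when the relevant root number is $+1$) and then evaluate the $2$-adic valuations $A_{n_0}$ \emph{exactly}, which is sensitive to the period normalization $\Omega_{\CE^{(-pn_0)}}$ and to the precise value of $\nu_{\CE}$. The curve identification of the second paragraph is the other place where genuine finite computation, rather than formal manipulation, is unavoidable.
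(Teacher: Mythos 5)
Your proposal is correct and follows essentially the same route as the paper: the paper likewise obtains \eqref{EF1} by specializing Proposition \ref{MT2}, using $\mu_f=16$ and the case-by-case values $\mu_g=8,4,12,8,4$ (so that the displayed constant is $\xi=\mu_f/(\mu_f+\mu_g)$, exactly the $\xi_g$ you identify), and then gets $\kappa_p=\max\{2(A-B),1\}$ from Theorem \ref{V} with $\mu(D^\circ)=1$, computing $A$ and $B$ from tables of $a_{n_0}$, $\CL(n_0)$, $A_{n_0}$, $B_{n_0}$ for the curves $14a1,11a1,26b1,34a1,58b1$. The paper simply asserts the Shimura-lift curve identifications and tabulates the finite computations whose necessity you correctly flag as the substantive work.
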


\begin{proof}
Note that \(\mu_f=16\) for all \(p\) and
	$$
		\mu_g=
    		\left\{
    		\begin{aligned}
        		8 & \quad p=7,17\\
        		4 & \quad p=11,29\\
        		12 & \quad p=13.
    		\end{aligned}
    		\right.
	$$
the equation (\ref{EF}) follows from Theorem \ref{MT}.

For $p=7$, \(\CE=14a1\), \(A=3/2\), \(B=0\), \(\kappa=3\).
\begin{center}
\begin{scriptsize}
\begin{table}[H]
\center
\begin{tabular}{|c|c|c|c|c|}
    \hline \(n_0\)&\(\mathcal{L}(n_0)\)&\(a_{n_0}\)&\(A_{n_0}\)&\(B_{n_0}\)\\
    \hline 57 & 4 & -4 & 3/2 & 3\\
    \hline 43 & 4 & 4 & 3/2 & 0\\
    \hline 29 & 0 & 0 & / & 2\\
    \hline 15 & 4 & 4 & 3/2 & 0\\
    \hline 89 & 4 & -4 & 3/2 & 3\\
    \hline 19 & 4 & -4 & 3/2 & 0\\
    \hline 5 & 0 & 0 & / & 2\\
    \hline 103 & 16 & 8 & 3/2 & 0\\
    \hline
\end{tabular}
\end{table}
\end{scriptsize}
\end{center}

For $p=11$, \(\CE=11a1\), \(A=2\), \(B=0\), \(\kappa=4\).
\begin{center}
\begin{scriptsize}
\begin{table}[H]
\center
\begin{tabular}{|c|c|c|c|c|}
    \hline \(n_0\)&\(\mathcal{L}(n_0)\)&\(a_{n_0}\)&\(A_{n_0}\)&\(B_{n_0}\)\\
    \hline 89 & 90 & 6 & 2 & 2\\
    \hline 67 & 125 & -10 & 5/2 & 0\\
    \hline 133 & 40 & -4 & 2 & 3\\
    \hline 23 & 5 & -2 & 5/2 & 0\\
    \hline 57 & 160 & 8 & 2 & 2\\
    \hline 35 & 20 & -4 & 5/2 & 0\\
    \hline 101 & 40 & -4 & 2 & 3\\
    \hline 79 & 20 & 4 & 5/2 & 0\\
    \hline
\end{tabular}
\end{table}
\end{scriptsize}
\end{center}

For $p=13$, \(\CE=26b1\), \(A=2\), \(B=1\), \(\kappa=2\).
\begin{center}
\begin{scriptsize}
\begin{table}[H]
\center
\begin{tabular}{|c|c|c|c|c|}
    \hline \(n_0\)&\(\mathcal{L}(n_0)\)&\(a_{n_0}\)&\(A_{n_0}\)&\(B_{n_0}\)\\
    \hline 105 & 8 & 4 & 2 & 1\\
    \hline 131 & 0 & 0 & / & \(\infty\)\\
    \hline 53 & 8 & -4 & 2 & 1\\
    \hline 79 & 72 & -12 & 2 & 2\\
    \hline 41 & 8 & -4 & 2 & 1\\
    \hline 67 & 0 & 0 & / & \(\infty\)\\
    \hline 93 & 8 & -4 & 2 & 1\\
    \hline 15 & 2 & 2 & 2 & 2\\
    \hline
\end{tabular}
\end{table}
\end{scriptsize}
\end{center}

For $p=17$, \(\CE=34a1\), \(A=1\), \(B=1\), \(\kappa=1\).
\begin{center}
\begin{scriptsize}
\begin{table}[H]
\center
\begin{tabular}{|c|c|c|c|c|}
    \hline \(n_0\)&\(\mathcal{L}(n_0)\)&\(a_{n_0}\)&\(A_{n_0}\)&\(B_{n_0}\)\\
    \hline 137 & 200 & -20 & 1 & 1\\
    \hline 35 & 8 & 4 & 1 & 2\\
    \hline 69 & 72 & 12 & 1 & 1\\
    \hline 103 & 0 & 0 & / & \(\infty\)\\
    \hline 105 & 32 & -8 & 1 & 1\\
    \hline 139 & 8 & 4 & 1 & 2\\
    \hline 173 & 32 & 8 & 1 & 1\\
    \hline 71 & 0 & 0 & / & \(\infty\)\\
    \hline
\end{tabular}
\end{table}
\end{scriptsize}
\end{center}

For $p=29$, \(\CE=58b1\), \(A=2\), \(B=1\), \(\kappa=2\).
\begin{center}
\begin{scriptsize}
\begin{table}[H]
\center
\begin{tabular}{|c|c|c|c|c|}
    \hline \(n_0\)&\(\mathcal{L}(n_0)\)&\(a_{n_0}\)&\(A_{n_0}\)&\(B_{n_0}\)\\
    \hline 233 & 98 & 14 & 2 & 1\\
    \hline 59 & 0 & 0 & / & \(\infty\)\\
    \hline 349 & 18 & -6 & 2 & 1\\
    \hline 407 & 32 & 8 & 2 & 2\\
    \hline 89 & 32 & -8 & 2 & 1\\
    \hline 147 & 0 & 0 & / & \(\infty\)\\
    \hline 205 & 32 & 8 & 2 & 1\\
    \hline 31 & 8 & -4 & 2 & 2\\
    \hline
\end{tabular}
\end{table}
\end{scriptsize}
\end{center}
\end{proof}

In particular, if \(p\equiv 3\ (mod\ 4)\) and \(L(\CE^{(-pn)},1)=0\), then \(r_f(n)=r_g(n)>0\).

\medskip

In Corollary \ref{E}, we may take \(\CE:y^2=P(x)\), where
	$$
		P(x)=
    		\left\{
    		\begin{aligned}
        		& x^3+x^2-8x+16 & &\quad p=7\\
        		& x^3-4x^2+16 & &\quad p=11\\
		& x^3-3x^2-40x+208 & &\quad p=13\\
		& x^3+x^2-48x+64 & &\quad p=17\\
        		& x^3+5x^2+88x+592 & &\quad p=29.
    		\end{aligned}
    		\right.
	$$
Note that \(P(x)\) has an integral root when \(p=7\) or \(p=17\), in these cases \(t_E(n)=\mu(n)\).

\begin{cor}
If \(p=7\), then
\begin{equation}\label{num-f-1}
  v_2(r_f(n))\ge \frac{3}{2}+\frac{1}{2}\mu(n).
\end{equation}
If \(p=17\), then
\[v_2(r_f(n))\ge 1+\frac{1}{2}\mu(n).\]
\end{cor}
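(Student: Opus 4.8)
The plan is to obtain both inequalities by specialising the bound \eqref{VF2} of Theorem \ref{V}. By Corollary \ref{E}, when $p=7$ the relevant optimal curve is $E=14a1$ with $A=\tfrac32$ and $\kappa=3$, while when $p=17$ it is $E=34a1$ with $A=1$ and $\kappa=1$; in both cases $E=\CE^{(\mp D^\circ)}$ where $\CE\colon y^2=P(x)$ and $P$ has an integral root. Since $\tfrac12 t_E(n^*)$ is the only $n$-dependent term in \eqref{VF2}, where $n^*=\left(\frac{-1}{n}\right)n$, the corollary is immediate once we know $t_E(n^*)=\mu(n)$: inserting $A=\tfrac32$ yields $v_2(r_f(n))\ge\tfrac32+\tfrac12\mu(n)$ and inserting $A=1$ yields $v_2(r_f(n))\ge 1+\tfrac12\mu(n)$.

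The crux, and what I expect to be the main obstacle, is the identity $t_E(n^*)=\mu(n)$. First I would observe that an integral root $r$ of $P$ gives a rational $2$-torsion point $(r,0)$ of $\CE$, and that a $2$-torsion point with vanishing $y$-coordinate survives every quadratic twist; hence $E$ and all the twists $E^{(n^*)}$ carry a rational point of order $2$. For each odd prime $q\mid n$ we have $q\nmid C_E$, so $E$ has good reduction at $q$, and the squarefree twist $E^{(n^*)}$, for which $q\parallel n^*$, acquires additive reduction of Kodaira type $I_0^*$ at $q$ with $v_q(\Delta)=6$. For this type the geometric component group is $(\BZ/2)^2$, and identifying its three nontrivial classes with the three nonzero $2$-torsion points shows $c_q\!\left(E^{(n^*)}\right)=\bigl|E^{(n^*)}(\BQ_q)[2]\bigr|$. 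The rational $2$-torsion point forces this order to be at least $2$, so $c_q\!\left(E^{(n^*)}\right)$ is even and $t(q)=1$; summing over the $\mu(n^*)=\mu(n)$ odd primes dividing $n^*$ gives $t_E(n^*)=\mu(n)$.

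Plugging $t_E(n^*)=\mu(n)$ into \eqref{VF2} proves both inequalities for every $n$ with $\mu(n)\ge\kappa$, which already exhausts all admissible $n$ when $p=17$, since there $\kappa=1$. For $p=7$ the only remaining shapes are $\mu(n)\in\{1,2\}$, and I would dispatch these directly from the exact formula \eqref{EF1}, writing $r_f(n)=\pm\xi a_{n_0}\sqrt{\CL(n)/\CL(n_0)}+\lambda_f(n)h(-7n)$ with $n_0\in\fS_1$ the representative $N_f$-square-linked to $n$ (and the degenerate case $\CL\equiv0$, where $a_n=0$, handled by the surviving term). Theorem \ref{Z} gives $v_2(\CL(n))\ge t_E(n^*)-1=\mu(n)-1$, so the first summand has $2$-adic valuation at least $A+\tfrac12\mu(n)$; Proposition \ref{gfp} gives $v_2(\lambda_f(n))\ge 1$ and Gauss genus theory gives $v_2\bigl(h(-7n)\bigr)\ge\mu(n)$, so the second summand has valuation at least $1+\mu(n)$. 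For $\mu(n)\in\{1,2\}$ both estimates are at least $\tfrac32+\tfrac12\mu(n)$, and since $v_2$ of a sum is bounded below by the minimum of the two valuations, the integer $r_f(n)$ satisfies the asserted bound with no cancellation to worry about.
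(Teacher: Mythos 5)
Your proposal is correct, and its central step is the same as the paper's: the paper also deduces $t_E(n^*)=\mu(n)$ from the integral root of $P$, quoting Tate's algorithm to get $c_q(\CE^{(-pn)})=1+\#\{x\in \mathbb{F}_q:P(x)=0\}\in\{2,4\}$ for every $q\mid n$; your reformulation via the rational $2$-torsion point surviving the twist and the $I_0^*$ component group is exactly that computation in different words. Where you genuinely diverge is the leftover range $\mu(n)\in\{1,2\}$ for $p=7$ (forced by $\kappa=3$): the paper dispatches it with the single unproved sentence ``$v_2(r_f(n))\ge 3$ except $n=1$'', an elementary divisibility coming from the orbits of $\mathcal{O}(f)$ on representations (all orbits have size $8$ or $16$ when $n>1$ is odd, square free and prime to $7$), whereas you re-run the $L$-value machinery: the unconditional bound \eqref{VF}, Theorem \ref{Z} for the Waldspurger term, and $v_2(\lambda_f(n))\ge 1$ plus Gauss genus theory for the class-number term. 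Your route is legitimate --- note that $v_2(\lambda_f(n))\ge 1$ does follow from Proposition \ref{gfp} (the values for $p=7$ are $2/3,\,4,\,2/3,\,8/3$), even though it contradicts the entries $B_{n_0}=0$ in the paper's $p=7$ table (an apparent inconsistency there; with $B=1$ one gets $\kappa=1$, so your case distinction, and the paper's, would in fact be unnecessary) --- and it stays inside the quantitative framework; the paper's observation is more elementary and has the advantage of applying verbatim to $n$ lying in square classes where (\textbf{In-Nv}) fails. On that last point, both you and the paper ultimately lean on the strong form of Waldspurger's theorem (vanishing of the $L$-value forces $a_n=0$), which is not literally contained in the ratio identity \eqref{WF}; your parenthetical at least acknowledges this degenerate case, while the paper's proof is silent about it.
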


\begin{proof}
By Tate's algorithm \cite{Tate}, for any prime \(q|n\) we have
\[c_q(\CE^{(-pn)})=1+\#\{x\in \mathbb{F}_q:P(x)=0\}.\]
Since \(P(x)\) has a root in \(\mathbb{Z}\), \(c_q(\CE^{(-pn)})=2\) or \(4\). Finally note that \(v_2(r_f(n))\ge 3\) except \(n=1\).
\end{proof}

\medskip

\subsection{Data for $p=7$} Use the program in PARI/GP, we give the 2-valuation of \(r_f(n)\) in Section \ref{Eg} when \(p=7\). We can see that the choice of \(A\) is almost optimal.
\begin{verbatim}
for(n=5,1000,if(gcd(14,n)==1&&moebius(n)!=0&&omega(n)>2,s=0;
for(z=-floor(sqrt(n/7)),floor(sqrt(n/7)),for(y=-floor(sqrt(n-7*z^2)),
floor(sqrt(n-7*z^2)),if(issquare(n-7*z^2-y^2)==1,if(n-7*z^2-y^2==0,s+=1);
if(n-7*z^2-y^2>0,s+=2);)));print(n,", ",3/2+omega(n)/2,", ",s,);))
\end{verbatim}

\begin{center}
\begin{scriptsize}
\begin{table}[H]
\renewcommand{\arraystretch}{1.3}
\center
\begin{tabular}{|c|c|c|c|c|}
    \hline \(n\)&\(\mu(n)\)&\(\frac{3}{2}+\frac{1}{2}\mu(n)\)&\(r_f(n)\)&\(v_2(r_f(n))\)\\
\hline 165 & 3 & 3 & 32 & 5\\
\hline 195 & 3 & 3 & 16 & 4\\
\hline 255 & 3 & 3 & 16 & 4\\
\hline 285 & 3 & 3 & 32 & 5\\
\hline 345 & 3 & 3 & 96 & 5\\
\hline 429 & 3 & 3 & 32 & 5\\
\hline 435 & 3 & 3 & 32 & 5\\
\hline 465 & 3 & 3 & 96 & 5\\
\hline 555 & 3 & 3 & 32 & 5\\
\hline 561 & 3 & 3 & 112 & 4\\
\hline 615 & 3 & 3 & 16 & 4\\
\hline 627 & 3 & 3 & 16 & 4\\
\hline 645 & 3 & 3 & 64 & 6\\
\hline 663 & 3 & 3 & 48 & 4\\
\hline 2145 & 4 & 7/2 & 256 & 8\\
\hline 2805 & 4 & 7/2 & 128 & 7\\
\hline 3135 & 4 & 7/2 & 64 & 6\\
\hline 3315 & 4 & 7/2 & 96 & 5\\
\hline 3705 & 4 & 7/2 & 304 & 4\\
\hline 3795 & 4 & 7/2 & 64 & 6\\
\hline 4485 & 4 & 7/2 & 128 & 7\\
\hline 4785 & 4 & 7/2 & 288 & 5\\
\hline 4845 & 4 & 7/2 & 128 & 7\\
\hline 5115 & 4 & 7/2 & 96 & 5\\
\hline 5655 & 4 & 7/2 & 96 & 5\\
\hline 5865 & 4 & 7/2 & 384 & 7\\
\hline 6045 & 4 & 7/2 & 192 & 6\\
\hline 6105 & 4 & 7/2 & 288 & 5\\
\hline 6555 & 4 & 7/2 & 64 & 6\\
\hline 6765 & 4 & 7/2 & 192 & 6\\
    \hline
\end{tabular}
\end{table}
\end{scriptsize}
\end{center}

\begin{center}
\begin{scriptsize}
\begin{table}[H]
\renewcommand{\arraystretch}{1.3}
\center
\begin{tabular}{|c|c|c|c|c|}
    \hline \(n\)&\(\mu(n)\)&\(\frac{3}{2}+\frac{1}{2}\mu(n)\)&\(r_f(n)\)&\(v_2(r_f(n))\)\\
\hline 36465 & 5 & 4 & 672 & 5\\
\hline 40755 & 5 & 4 & 224 & 5\\
\hline 49335 & 5 & 4 & 192 & 6\\
\hline 53295 & 5 & 4 & 288 & 5\\
\hline 62205 & 5 & 4 & 512 & 9\\
\hline 62985 & 5 & 4 & 1120 & 5\\
\hline 64515 & 5 & 4 & 256 & 8\\
\hline 66495 & 5 & 4 & 288 & 5\\
\hline 72105 & 5 & 4 & 1088 & 6\\
\hline 76245 & 5 & 4 & 640 & 7\\
\hline 79365 & 5 & 4 & 640 & 7\\
\hline 81345 & 5 & 4 & 1024 & 10\\
\hline 85215 & 5 & 4 & 352 & 5\\
\hline 86955 & 5 & 4 & 320 & 6\\
\hline 87945 & 5 & 4 & 1568 & 5\\
\hline 692835 & 6 & 9/2 & 1024 & 10\\
\hline 838695 & 6 & 9/2 & 960 & 6\\
\hline 937365 & 6 & 9/2 & 2048 & 11\\
\hline 1057485 & 6 & 9/2 & 2560 & 9\\
\hline 1130415 & 6 & 9/2 & 1344 & 6\\
\hline 1181895 & 6 & 9/2 & 1152 & 7\\
\hline 1225785 & 6 & 9/2 & 5504 & 7\\
\hline 1263405 & 6 & 9/2 & 2304 & 8\\
\hline 1349205 & 6 & 9/2 & 3072 & 10\\
\hline 1430715 & 6 & 9/2 & 1536 & 9\\
\hline 1448655 & 6 & 9/2 & 1088 & 6\\
\hline 1495065 & 6 & 9/2 & 4864 & 8\\
    \hline
\end{tabular}
\end{table}
\end{scriptsize}
\end{center}

We remark the inequality \eqref{num-f-1} is equivalent to
\[  v_2(r_f(n))\ge \left\lceil\frac{3}{2}+\frac{1}{2}\mu(n)\right\rceil.
\]
If $n=3705$, the above data show the equality holds, i.e., our lower bound is in some sense optimal. Here for a real number $b$, we write $\lceil b\rceil$ for the least integer no smaller than $b$.

\subsection{Small Level Forms} For \(d=3\), \(h_f=2\) and \(N_f\le 100\), we give the Shimura lift of \(\theta_f-\theta_g\) with data in \cite{Brandt}. Denote \(g=a_1x^2+a_2y^2+a_3z^2+a_4yz+a_5xz+a_6xy\) by \([a_1,a_2,a_3,a_4,a_5,a_6]\). For many genus in the table, we can give eplicit form of Theorem \ref{MT} and \ref{V}.

\begin{center}
\begin{scriptsize}
\begin{table}[H]
\renewcommand{\arraystretch}{1.3}
\center
\begin{tabular}{|c|c|c|c|}
    \hline Level&\(f\)&\(g\)&Elliptic Curve\\
\hline 28&\([1,1,7,0,0,0]\)&\([1,2,4,2,0,0]\)&14a\\
\hline 40&\([1,1,10,0,0,0]\)&\([2,2,3,0,2,0]\)&20a\\
\hline 44&\([1,1,11,0,0,0]\)&\([1,3,4,2,0,0]\)&11a\\
\hline 52&\([1,1,13,0,0,0]\)&\([2,2,5,2,2,2]\)&26b\\
\hline 52&\([1,2,7,2,0,0]\)&\([2,3,3,2,0,2]\)&26a\\
\hline 56&\([1,1,14,0,0,0]\)&\([1,2,7,0,0,0]\)&14a\\
\hline 60&\([1,2,8,2,0,0]\)&\([1,3,5,0,0,0]\)&15a\\
\hline 60&\([1,1,15,0,0,0]\)&\([1,4,4,2,0,0]\)&30a\\
\hline 64&\([1,1,16,0,0,0]\)&\([2,2,5,2,2,0]\)&/ \\
\hline 68&\([1,3,6,2,0,0]\)&\([2,3,4,2,2,2]\)&17a\\
\hline 68&\([1,1,17,0,0,0]\)&\([1,2,9,2,0,0]\)&34a\\
    \hline
\end{tabular}
\end{table}
\end{scriptsize}
\end{center}

\begin{center}
\begin{scriptsize}
\begin{table}[H]
\renewcommand{\arraystretch}{1.3}
\center
\begin{tabular}{|c|c|c|c|}
    \hline Level&\(f\)&\(g\)&Elliptic Curve\\
\hline 72&\([1,2,9,0,0,0]\)&\([2,3,4,2,0,2]\)&36a\\
\hline 72&\([1,1,18,0,0,0]\)&\([2,2,5,0,2,0]\)&36a\\
\hline 76&\([1,2,10,2,0,0]\)&\([2,2,7,2,2,2]\)&38b\\
\hline 80&\([1,4,6,4,0,0]\)&\([2,2,5,0,0,0]\)&20a\\
\hline 80&\([1,1,20,0,0,0]\)&\([1,4,5,0,0,0]\)&20a\\
\hline 84&\([1,3,7,0,0,0]\)&\([2,3,4,0,2,0]\)&21a\\
\hline 84&\([1,2,11,2,0,0]\)&\([3,3,3,0,2,2]\)&14a\\
\hline 84&\([1,5,5,4,0,0]\)&\([2,2,7,0,0,2]\)&42a\\
\hline 88&\([1,2,11,0,0,0]\)&\([2,3,4,2,0,0]\)&44a\\
\hline 88&\([1,1,22,0,0,0]\)&\([2,3,5,2,2,2]\)&11a\\
\hline 96&\([1,2,12,0,0,0]\)&\([2,3,4,0,0,0]\)&24a\\
\hline 96&\([1,3,8,0,0,0]\)&\([3,3,4,-2,2,2]\)&24a\\
\hline 100&\([1,1,25,0,0,0]\)&\([2,3,5,2,2,0]\)&50b\\
\hline 100&\([1,2,13,2,0,0]\)&\([2,2,9,2,2,2]\)&50b\\
    \hline
\end{tabular}
\end{table}
\end{scriptsize}
\end{center}

\section{Numerical Data for $K$-groups}\label{ND}

We consider the standard form \(f=x_1^2+\cdots +x_d^2\) whose coefficients are 1. In this case \(K=\mathbb{Q}(\sqrt{n})\). We give the first three examples \(d=5,9,13\).

\subsection{Case $d=5$}

If we consider the simplest form \(f=x_1^2+x_2^2+x_3^2+x_4^2+x_5^2\) whose class number is 1, let \(K=\mathbb{Q}(\sqrt{n})\), then for odd number \(n>5\)
	$$
		r_f(n)=
    		\left\{
    		\begin{aligned}
        		& 20|K_2(\mathcal{O}_K)| & n\equiv 3\ (mod\ 4) \\
        		& 60|K_2(\mathcal{O}_K)| & n\equiv 1\ (mod\ 8) \\
			& 140|K_2(\mathcal{O}_K)| & n\equiv 5\ (mod\ 8)
    		\end{aligned}
    		\right.
	$$

\begin{center}
\begin{scriptsize}
\begin{table}[H]
\renewcommand{\arraystretch}{1.3}
\center
\begin{tabular}{|c|c|c|c|c|}
    \hline \(n\)&\(\tau(n)\)&\(r_f(n)\)&\(|K_2(\mathcal{O}_K)|\)&Factorization\\
\hline 7 & 1 & 320 & 16 & \(2^4\)\\
\hline 11 & 1 & 560 & 28 & \(2^2\cdot7\)\\
\hline 13 & 1 & 560 & 4 & \(2^2\)\\
\hline 15 & 2 & 960 & 48 & \(2^4\cdot3\)\\
\hline 17 & 1 & 480 & 8 & \(2^3\)\\
\hline 19 & 1 & 1520 & 76 & \(2^2\cdot19\)\\
\hline 21 & 2 & 1120 & 8 & \(2^3\)\\
\hline 23 & 1 & 1600 & 80 & \(2^4\cdot5\)\\
\hline 29 & 1 & 1680 & 12 & \(2^2\cdot3\)\\
\hline 31 & 1 & 3200 & 160 & \(2^5\cdot5\)\\
\hline 33 & 2 & 1440 & 24 & \(2^3\cdot3\)\\
\hline 35 & 2 & 3040 & 152 & \(2^3\cdot19\)\\
\hline 37 & 1 & 2800 & 20 & \(2^2\cdot5\)\\
\hline 39 & 2 & 4160 & 208 & \(2^4\cdot13\)\\
\hline 41 & 1 & 1920 & 32 & \(2^5\)\\
\hline 43 & 1 & 5040 & 252 & \(2^2\cdot3^2\cdot7\)\\
\hline 47 & 1 & 4480 & 224 & \(2^5\cdot7\)\\
\hline 51 & 2 & 6240 & 312 & \(2^3\cdot3\cdot13\)\\
\hline 53 & 1 & 3920 & 28 & \(2^2\cdot7\)\\
\hline 55 & 2 & 7360 & 368 & \(2^4\cdot23\)\\
\hline 57 & 2 & 3360 & 56 & \(2^3\cdot7\)\\
\hline 59 & 1 & 6800 & 340 & \(2^2\cdot5\cdot17\)\\
\hline 61 & 1 & 6160 & 44 & \(2^2\cdot11\)\\
\hline 65 & 2 & 3840 & 64 & \(2^6\)\\
\hline 67 & 1 & 9840 & 492 & \(2^2\cdot3\cdot41\)\\
\hline 69 & 2 & 6720 & 48 & \(2^4\cdot3\)\\
    \hline
\end{tabular}
\end{table}
\end{scriptsize}
\end{center}

\begin{center}
\begin{scriptsize}
\begin{table}[H]
\renewcommand{\arraystretch}{1.3}
\center
\begin{tabular}{|c|c|c|c|c|}
    \hline \(n\)&\(\tau(n)\)&\(r_f(n)\)&\(|K_2(\mathcal{O}_K)|\)&Factorization\\
\hline 71 & 1 & 9280 & 464 & \(2^4\cdot29\)\\
\hline 73 & 1 & 5280 & 88 & \(2^3\cdot11\)\\
\hline 77 & 2 & 6720 & 48 & \(2^4\cdot3\)\\
\hline 79 & 1 & 13440 & 672 & \(2^5\cdot3\cdot7\)\\
\hline 83 & 1 & 10320 & 516 & \(2^2\cdot3\cdot43\)\\
\hline 85 & 2 & 10080 & 72 & \(2^3\cdot3^2\)\\
\hline 87 & 2 & 12480 & 624 & \(2^4\cdot3\cdot13\)\\
\hline 89 & 1 & 6240 & 104 & \(2^3\cdot13\)\\
\hline 91 & 2 & 16480 & 824 & \(2^3\cdot103\)\\
\hline 93 & 2 & 10080 & 72 & \(2^3\cdot3^2\)\\
\hline 95 & 2 & 13760 & 688 & \(2^4\cdot43\)\\
\hline 97 & 1 & 8160 & 136 & \(2^3\cdot17\)\\
    \hline
\end{tabular}
\end{table}
\end{scriptsize}
\end{center}

\subsection{Case $d=9$}
For \(f=x_1^2+x_2^2+\cdots+x_9^2\) whose class number is 2, the other form in the genus of \(f\) has matrix

\[A_g=\begin{pmatrix}
2 & 0 & 0 & 0 & 0 & 0 & 0 & 0 & 0\\
0 & 4 & 2 & 2 & 2 & 2 & 2 & 2 & 2\\
0 & 2 & 4 & 2 & 2 & 2 & 2 & 2 & 2\\
0 & 2 & 2 & 4 & 2 & 0 & 0 & 0 & 0\\
0 & 2 & 2 & 2 & 4 & 0 & 2 & 2 & 2\\
0 & 2 & 2 & 0 & 0 & 4 & 2 & 2 & 2\\
0 & 2 & 2 & 0 & 2 & 2 & 4 & 2 & 2\\
0 & 2 & 2 & 0 & 2 & 2 & 2 & 4 & 2\\
0 & 2 & 2 & 0 & 2 & 2 & 2 & 2 & 4\\
\end{pmatrix}\]

Let \(K=\mathbb{Q}(\sqrt{n})\), then for odd number \(n>5\)

	$$
		15r_f(n)+2r_g(n)=
    		\left\{
    		\begin{aligned}
        		& 480|K_6(\mathcal{O}_K)| & n\equiv 3\ (mod\ 4) \\
        		& 65760|K_6(\mathcal{O}_K)| & n\equiv 1\ (mod\ 8) \\
			& 73440|K_6(\mathcal{O}_K)| & n\equiv 5\ (mod\ 8)
    		\end{aligned}
    		\right.
	$$

\begin{center}
\begin{scriptsize}
\begin{table}[H]
\renewcommand{\arraystretch}{1.3}
\center
\begin{tabular}{|c|c|c|c|c|c|}
    \hline \(n\)&\(\tau(n)\)&\(r_f(n)\)&\(r_g(n)\)&\(|K_6(\mathcal{O}_K)|\)&Factorization\\
\hline 7 & 1 & 12672 & 13440 & 452 & \(2^2\cdot113\)\\
\hline 11 & 1 & 60768 & 60960 & 2153 & \(2153\)\\
\hline 13 & 1 & 125280 & 125280 & 29 & \(29\)\\
\hline 15 & 2 & 182400 & 178560 & 6444 & \(2^2\cdot3^2\cdot179\)\\
\hline 17 & 1 & 317376 & 315840 & 82 & \(2\cdot41\)\\
\hline 19 & 1 & 421344 & 423840 & 14933 & \(109\cdot137\)\\
\hline 21 & 2 & 665280 & 665280 & 154 & \(2\cdot7\cdot11\)\\
\hline 23 & 1 & 800640 & 804480 & 28372 & \(2^2\cdot41\cdot173\)\\
\hline 29 & 1 & 2034720 & 2034720 & 471 & \(3\cdot157\)\\
\hline 31 & 1 & 2338560 & 2346240 & 82856 & \(2^3\cdot10357\)\\
\hline 33 & 2 & 3273792 & 3263040 & 846 & \(2\cdot3^2\cdot47\)\\
\hline 35 & 2 & 3487680 & 3474240 & 123466 & \(2\cdot7\cdot8819\)\\
\hline 37 & 1 & 4877280 & 4877280 & 1129 & \(1129\)\\
\hline 39 & 2 & 5165952 & 5151360 & 182900 & \(2^2\cdot5^2\cdot31\cdot59\)\\
\hline 41 & 1 & 6930432 & 6942720 & 1792 & \(2^8\cdot7\)\\
\hline 43 & 1 & 7334496 & 7345440 & 259809 & \(3\cdot11\cdot7873\)\\
\hline 47 & 1 & 9762048 & 9772800 & 345784 & \(2^3\cdot43223\)\\
    \hline
\end{tabular}
\end{table}
\end{scriptsize}
\end{center}

\begin{center}
\begin{scriptsize}
\begin{table}[H]
\renewcommand{\arraystretch}{1.3}
\center
\begin{tabular}{|c|c|c|c|c|c|}
    \hline \(n\)&\(\tau(n)\)&\(r_f(n)\)&\(r_g(n)\)&\(|K_6(\mathcal{O}_K)|\)&Factorization\\
\hline 51 & 2 & 13202112 & 13239360 & 467730 & \(2\cdot3^2\cdot5\cdot5197\)\\
\hline 53 & 1 & 16740000 & 16740000 & 3875 & \(5^3\cdot31\)\\
\hline 55 & 2 & 17377920 & 17358720 & 615388 & \(2^2\cdot23\cdot6689\)\\
\hline 57 & 2 & 22178112 & 22198080 & 5734 & \(2\cdot47\cdot61\)\\
\hline 59 & 1 & 21710880 & 21686880 & 768827 & \(271\cdot2837\)\\
\hline 61 & 1 & 28127520 & 28127520 & 6511 & \(17\cdot383\)\\
\hline 65 & 2 & 34755840 & 34725120 & 8984 & \(2^3\cdot1123\)\\
\hline 67 & 1 & 34647264 & 34596000 & 1226877 & \(3\cdot408959\)\\
\hline 69 & 2 & 42768000 & 42768000 & 9900 & \(2^2\cdot3^2\cdot5^2\cdot11\)\\
\hline 71 & 1 & 41526144 & 41585280 & 1470964 & \(2^2\cdot11\cdot101\cdot331\)\\
\hline 73 & 1 & 53342784 & 53344320 & 13790 & \(2\cdot5\cdot7\cdot197\)\\
\hline 77 & 2 & 61845120 & 61845120 & 14316 & \(2^2\cdot3\cdot1193\)\\
\hline 79 & 1 & 61855488 & 61850880 & 2190696 & \(2^3\cdot3\cdot37\cdot2467\)\\
\hline 83 & 1 & 71441568 & 71462880 & 2530311 & \(3\cdot7^3\cdot2459\)\\
\hline 85 & 2 & 89760960 & 89760960 & 20778 & \(2\cdot3\cdot3463\)\\
\hline 87 & 2 & 85281408 & 85311360 & 3020508 & \(2^2\cdot3^2\cdot83903\)\\
\hline 89 & 1 & 104481216 & 104479680 & 27010 & \(2\cdot5\cdot37\cdot73\)\\
\hline 91 & 2 & 101433024 & 101451840 & 3592498 & \(2\cdot7\cdot13\cdot19739\)\\
\hline 93 & 2 & 121279680 & 121279680 & 28074 & \(2\cdot3\cdot4679\)\\
\hline 95 & 2 & 114906240 & 114810240 & 4069196 & \(2^2\cdot1017299\)\\
\hline 97 & 1 & 144280512 & 144254400 & 37298 & \(2\cdot17\cdot1097\)\\
    \hline
\end{tabular}
\end{table}
\end{scriptsize}
\end{center}

\subsection{Case $d=13$}

For \(f=x_1^2+x_2^2+\cdots+x_{13}^2\) whose class number is 3, the other forms in the genus of \(f\) has matrix
\[A_g=
\left(\begin{array}{ccccccccccccc}
2 & 0 & 0 & 0 & 0 & 0 & 0 & 0 & 0 & 0 & 0 & 0 & 0\\
0 & 2 & 0 & 0 & 0 & 0 & 0 & 0 & 0 & 0 & 0 & 0 & 0\\
0 & 0 & 2 & 0 & 0 & 0 & 0 & 0 & 0 & 0 & 0 & 0 & 0\\
0 & 0 & 0 & 2 & 0 & 0 & 0 & 0 & 0 & 0 & 0 & 0 & 0\\
0 & 0 & 0 & 0 & 2 & 0 & 0 & 0 & 0 & 0 & 0 & 0 & 0\\
0 & 0 & 0 & 0 & 0 & 4 & 2 & 2 & 2 & 2 & 2 & 2 & 2\\
0 & 0 & 0 & 0 & 0 & 2 & 4 & 2 & 2 & 0 & 0 & 0 & 0\\
0 & 0 & 0 & 0 & 0 & 2 & 2 & 4 & 2 & 2 & 2 & 2 & 2\\
0 & 0 & 0 & 0 & 0 & 2 & 2 & 2 & 4 & 0 & 2 & 2 & 2\\
0 & 0 & 0 & 0 & 0 & 2 & 0 & 2 & 0 & 4 & 2 & 2 & 2\\
0 & 0 & 0 & 0 & 0 & 2 & 0 & 2 & 2 & 2 & 4 & 2 & 2\\
0 & 0 & 0 & 0 & 0 & 2 & 0 & 2 & 2 & 2 & 2 & 4 & 2\\
0 & 0 & 0 & 0 & 0 & 2 & 0 & 2 & 2 & 2 & 2 & 2 & 4\\
\end{array}\right)\]

\[A_h=
\left(\begin{array}{ccccccccccccc}
2 & 0 & 0 & 0 & 0 & 0 & 0 & 0 & 0 & 0 & 0 & 0 & 0\\
0 & 4 & 2 & 2 & 2 & -2 & 2 & 2 & 2 & 2 & 0 & 0 & 2\\
0 & 2 & 4 & 2 & 2 & 0 & 2 & 2 & 2 & 2 & 0 & 0 & 0\\
0 & 2 & 2 & 4 & 2 & -2 & 2 & 2 & 2 & 2 & 2 & 2 & 2\\
0 & 2 & 2 & 2 & 4 & -2 & 2 & 2 & 2 & 2 & 2 & 2 & 2\\
0 & -2 & 0 & -2 & -2 & 4 & -2 & -2 & -2 & -2 & -2 & -2 & -2\\
0 & 2 & 2 & 2 & 2 & -2 & 4 & 2 & 2 & 2 & 2 & 2 & 2\\
0 & 2 & 2 & 2 & 2 & -2 & 2 & 4 & 2 & 2 & 2 & 2 & 2\\
0 & 2 & 2 & 2 & 2 & -2 & 2 & 2 & 4 & 2 & 2 & 2 & 2\\
0 & 2 & 2 & 2 & 2 & -2 & 2 & 2 & 2 & 4 & 2 & 2 & 2\\
0 & 0 & 0 & 2 & 2 & -2 & 2 & 2 & 2 & 2 & 6 & 4 & 4\\
0 & 0 & 0 & 2 & 2 & -2 & 2 & 2 & 2 & 2 & 4 & 6 & 4\\
0 & 2 & 0 & 2 & 2 & -2 & 2 & 2 & 2 & 2 & 4 & 4 & 6\\
\end{array}\right)\]

Let \(K=\mathbb{Q}(\sqrt{n})\), then for odd number \(n>13\), then
	$$
		15r_f(n)+286r_g(n)+390r_h(n)=
    		\left\{
    		\begin{aligned}
        		& 260|K_{10}(\mathcal{O}_K)| & n\equiv 3\ (mod\ 4) \\
        		& 507780|K_{10}(\mathcal{O}_K)| & n\equiv 1\ (mod\ 8) \\
			& 524420|K_{10}(\mathcal{O}_K)| & n\equiv 5\ (mod\ 8).
    		\end{aligned}
    		\right.
	$$

\begin{center}
\begin{scriptsize}
\begin{table}[H]
\renewcommand{\arraystretch}{1.3}
\center
\begin{tabular}{|c|c|c|c|c|c|}
    \hline \(n\)&\(\tau(n)\)&\(r_f(n)\)&\(r_g(n)\)&\(r_h(n)\)&\(|K_{10}(\mathcal{O}_K)|\)\\
\hline 15 & 2 & 17689152 & 17682240 & 17678784 & \(2^4\cdot3\cdot661\cdot1481\)\\
\hline 17 & 1 & 34039200 & 34043040 & 34044960 & \(2^3\cdot5791\)\\
\hline 19 & 1 & 64966096 & 64973840 & 64977712 & \(2^2\cdot29\cdot1488671\)\\
\hline 21 & 2 & 109157152 & 109119520 & 109100704 & \(2^3\cdot17971\)\\
\hline 23 & 1 & 185245632 & 185287360 & 185308224 & \(2^4\cdot5\cdot7^3\cdot131\cdot137\)\\
\hline 29 & 1 & 643027632 & 643057200 & 643071984 & \(2^2\cdot3^2\cdot23537\)\\
\hline 31 & 1 & 959840128 & 959661440 & 959572096 & \(2^5\cdot5\cdot15939757\)\\
\hline 33 & 2 & 1309033440 & 1309137120 & 1309188960 & \(2^3\cdot3\cdot74231\)\\
\hline 35 & 2 & 1865484192 & 1865397280 & 1865353824 & \(2^3\cdot359\cdot383\cdot4507\)\\
\hline 37 & 1 & 2461908176 & 2462086480 & 2462175632 & \(2^2\cdot5\cdot7\cdot23173\)\\
\hline 39 & 2 & 3387200960 & 3387404480 & 3387506240 & \(2^4\cdot7\cdot80382319\)\\
\hline 41 & 1 & 4314910080 & 4314648960 & 4314518400 & \(2^5\cdot17\cdot43\cdot251\)\\
\hline 43 & 1 & 5802894864 & 5802799440 & 5802751728 & \(2^2\cdot3\cdot1285165781\)\\
\hline 47 & 1 & 9438089856 & 9438344320 & 9438471552 & \(2^5\cdot347\cdot2259041\)\\
\hline 51 & 2 & 14812980000 & 14813308320 & 14813472480 & \(2^3\cdot3\cdot1640393453\)\\
\hline 53 & 1 & 17722663920 & 17722632560 & 17722616880 & \(2^2\cdot983\cdot5939\)\\
\hline 55 & 2 & 22469066048 & 22468640320 & 22468427456 & \(2^4\cdot43\cdot139\cdot624419\)\\
\hline 57 & 2 & 26455010400 & 26453962080 & 26453437920 & \(2^3\cdot1327\cdot3391\)\\
\hline 59 & 1 & 32969103024 & 32968811120 & 32968665168 & \(2^2\cdot5\cdot401\cdot881\cdot12401\)\\
\hline 61 & 1 & 38507828528 & 38508883120 & 38509410416 & \(2^2\cdot13\cdot975797\)\\
\hline 65 & 2 & 54401717760 & 54403499520 & 54404390400 & \(2^7\cdot43\cdot13451\)\\
\hline 67 & 1 & 66523533648 & 66522998160 & 66522730416 & \(2^2\cdot3^2\cdot29\cdot139\cdot347\cdot3511\)\\
\hline 69 & 2 & 75740433600 & 75739949760 & 75739707840 & \(2^4\cdot3\cdot23\cdot90397\)\\
\hline 71 & 1 & 91274369472 & 91273643200 & 91273280064 & \(2^4\cdot79\cdot191911991\)\\
\hline 73 & 1 & 103288355040 & 103286031840 & 103284870240 & \(2^3\cdot7\cdot1087\cdot2309\)\\
\hline 77 & 2 & 138263995584 & 138263167680 & 138262753728 & \(2^4\cdot3\cdot11\cdot345041\)\\
\hline 79 & 1 & 164649144192 & 164650423680 & 164651063424 & \(2^5\cdot3^3\cdot17\cdot181\cdot164599\)\\
\hline 83 & 1 & 215420929776 & 215421624240 & 215421971472 & \(2^2\cdot3\cdot47710406203\)\\
\hline 85 & 2 & 238800815136 & 238799788320 & 238799274912 & \(2^3\cdot3\cdot17\cdot771209\)\\
\hline 87 & 2 & 279455322432 & 279454900800 & 279454689984 & \(2^4\cdot3\cdot6359\cdot2433247\)\\
\hline 89 & 1 & 306394252320 & 306389970720 & 306387829920 & \(2^3\cdot7^2\cdot1063627\)\\
\hline 91 & 2 & 358378620704 & 358376659360 & 358375678688 & \(2^3\cdot61\cdot16447\cdot118669\)\\
\hline 93 & 2 & 391074706464 & 391076432160 & 391077295008 & \(2^3\cdot3\cdot1289\cdot16657\)\\
\hline 95 & 2 & 452766709824 & 452767380800 & 452767716288 & \(2^4\cdot127\cdot592183489\)\\
\hline 97 & 1 & 493176047520 & 493181765280 & 493184624160 & \(2^3\cdot83892047\)\\
    \hline
\end{tabular}
\end{table}
\end{scriptsize}
\end{center}

\bigskip

\bigskip

\noindent Li-Tong Deng \\
Yau Mathematical Sciences Center \\
Tsinghua University \\
Beijing, 100084 \\
China \\
{\it dlt19@tsinghua.org.cn}

\bigskip

\noindent Yong-Xiong Li \\
Yau Mathematical Sciences Center \\
Tsinghua University \\
Beijing, 100084 \\
China \\
{\it yongxiongli@gmail.com}\\

\bigskip

\noindent Shuai Zhai\\
Research Center for Mathematics and Interdisciplinary Sciences \\
Shandong University \\
Qingdao, Shandong \\
China \\
{\it zhai@sdu.edu.cn}\\


\begin{thebibliography}{10}


\bibitem{Brandt} H. Brandt, O. Intrau, A. Schiemann, \emph{Table of Odd Ternary Quadratic Forms}, http://www.math.rwth-aachen.de/~Gabriele.Nebe/LATTICES/Brandt\_1.html

\bibitem{BFH} D. Bump; S. Friedberg; J. Hoffstein, \emph{Nonvanishing theorems for $L$-functions of modular forms and their derivatives.}, Invent. Math. 102 (1990), no. 3, 543--618.

\bibitem{Eichler} M. Eichler, \emph{Quadratische Formen und Orthogonale Gruppen}, Berlin, Gottingen, Heidelberg: Springer (1952).






\bibitem{Iwaniec} H. Iwaniec, \emph{Topics in Classical Automorphic Forms}, Graduate Studies in Mathematics \textbf{17}, AMS. (1997).





\bibitem{Lichtenbaum} S. Lichtenbaum, \emph{Values of Zeta Functions, Etale Cohomology, and Algebraic $K$-theory}, in Algebraic $K$-theory II, Springer Lecture Notes in Mathematics \textbf{342} (1973), 489--501.

\bibitem{Niwa} S. Niwa, \emph{Modular Forms of Half-integral Weight and the Integral of Certain Theta Functions}, Nagoya Math. J. \textbf{56} (1975), 147--161.

\bibitem{Qin} H. Qin, \emph{Congruent numbers, quadratic forms and \(K_2\)}, Math. Ann. \textbf{383} (2022), 1647--1686.

\bibitem{Q} D. Quillen, \emph{Finite generation of the groups $K_i$ of rings of algebraic integers.},  Lecture Notes in Math., Vol. 341, Springer, Berlin-New York, 1973


\bibitem{Rallis} S. Rallis, \emph{The Eichler Commutation Relation and the Continuous Spectrum of the Weil Representation}, in Noncommutative Harmonic Analysis, Springer Lecture Notes in Mathematics \textbf{728} (1979), 211--244.


\bibitem{Schulze} R. Schulze-Pillot, \emph{Thetareihen Positiv Definiter Quadratischer Formen}, Invent. Math. \textbf{75} (2) (1984), 283--299.



\bibitem{Shimura} G. Shimura, \emph{On Modular Forms of Half Integral Weight}, Math. Ann. \textbf{97}(2) (1973), 440--481.


\bibitem{Silverman} J. H. Silverman, \emph{The Arithmetic of Elliptic Curves}, Springer Graduate Texts in Mathematics \textbf{106} (2009).



\bibitem{Tate} J. Tate, \emph{Algorithm for Determining the Type of a Singular Fiber in an Elliptic Pencil}, in Modular Functions of One Variable IV, Lect. Notes in Math. \textbf{476}, B.J. Birch and W. Kuyk, eds., Springer-Verlag, Berlin (1975), 33--52.



\bibitem{Waldspurger} J-L. Waldspurger, \emph{Sur les Coefficients de Fourier des Formes Modulaires de Poids Demi-entier}, J. Math. Pures Appl. \textbf{60}(4) (1981), 375--484.

\bibitem{Walling} L-H. Walling, \emph{A remark on differences of theta series},  J. Number Theory 48 (1994), no. 2, 243--251.

\bibitem{Weibel} C. Weibel, \emph{Algebraic K-theory of Rings of Integers in Local and Global Fields}, Handbook of K-theory. Vol. 2, Springer, Berlin (2005), 139--190.

\bibitem{WZ} X. Wang, C. Zhao, \emph{Some Exact Formulae for the Numbers of Representations of Integers by Ternary Quadratic Forms}, Results. Math. \textbf{45} (2004), 370--388.

\bibitem{Wiles91}A. Wiles, {\em  The Iwasawa conjecture for totally real fields}.   Ann. of Math. (2) 131 (1990), no. 3, 493--540.


\bibitem{Zhai} S. Zhai, \emph{The Birch Swinnerton-Dyer Exact Formula for Quadratic Twists of Elliptic Curves},
https://doi.org/10.48550/arXiv.2102.11798


\end{thebibliography}
\end{document}